\newcommand{\eps}{\varepsilon}
\newcommand{\R}{\mathbb R}
\newcommand{\C}{\mathbb{C}}
\newcommand{\NN}{\mathbb{N}}
\newcommand{\ZZ}{\mathbb{Z}}
\newcommand{\dvr}{\mathrm{div}}
\newcommand{\tr}{\mathrm{tr}\,}
\newcommand{\beq}{\begin{equation}}
\newcommand{\eeq}{\end{equation}}
\newcommand{\bal}{\begin{align*}}
\newcommand{\eal}{\end{align*}}
\newcommand{\grad}{\nabla}
\newcommand{\wto}{\rightharpoonup}
\def\leq{\leqslant}
\def\div{{\rm div}\,}
\newcommand{\Om}{\Omega}
\newcommand{\Ipm}{\mathcal{I}}
\newcommand{\DDD}{\mathcal{D}}
\newcommand{\np}{n_i^+}
\newcommand{\nm}{n_i^-}
\newcommand{\no}{n_j^0}
\newcommand{\be}{\begin{equation}}
\newcommand{\ee}{\end{equation}}
\newcommand{\nnn}{\nonumber}
\newtheorem{theorem}{Theorem}[section]
\newtheorem{proposition}[theorem]{Proposition}
\newtheorem{lemma}[theorem]{Lemma}
\newtheorem{corollary}[theorem]{Corollary}
\newtheorem{remark}[theorem]{Remark}
\numberwithin{equation}{section}
\begin{document}

\markboth{S. Alama \& L. Bronsard \& D. Golovaty} {Title TBA}

\title{Thin Film Liquid Crystals \\ 
with Oblique Anchoring and Boojums}

\author{Stan Alama\footnote{Department of Mathematics and Statistics, McMaster University,
Hamilton, ON, L8S 4K1, Canada, \texttt{alama@mcmaster.ca}},  Lia Bronsard\footnote{Department of Mathematics and Statistics, McMaster University,
Hamilton, ON, L8S 4K1, Canada, \texttt{bronsard@mcmaster.ca}},
 Dmitry Golovaty\footnote{Department of Mathematics, University of Akron, \texttt{dmitry@uakron.edu}}}

\maketitle


\vskip 0.25truein

\begin{abstract}
We study a two-dimensional variational problem which arises as a thin-film limit of the Landau-de Gennes energy of nematic liquid crystals.  We impose an oblique angle condition for the nematic director on the boundary, via boundary penalization (weak anchoring.)  We show that for strong anchoring strength (relative to the usual Ginzburg-Landau length scale parameter,) defects will occur in the interior, as in the case of strong (Dirichlet) anchoring, but for weaker anchoring strength all defects will occur on the boundary.  These defects will each carry a fractional winding number; such boundary defects are known as ``boojums''.  The boojums will occur in ordered pairs along the boundary; for angle $\alpha\in (0,\frac{\pi}{2})$, they serve to reduce the winding of the phase by steps of $2\alpha$ and $(2\pi-2\alpha)$ in order to avoid the formation of interior defects.  We determine the number and location of the defects via a Renormalized Energy and numerical simulations.
\end{abstract}

%
%
%

\section{Introduction}

In this paper we study minimizers of a variational problem motivated by the study of defects in a nematic liquid crystal.  We consider a two-dimensional setting, arising in a thin-film reduction of the three dimensional Landau--de Gennes model to two dimensions.  The special feature of our problem is in the boundary condition imposed, in which energy minimization prefers that the nematic director be oblique to the normal to the boundary with a prescribed angle.  In three dimensions, such a fixed angle condition constrains the nematic director to lie on a cone coaxial with the boundary normal; in the plane, this reduces to demanding that the director make an angle of $\pm\alpha$ with respect to the normal vector at each boundary point.  In our model this will be accomplished by imposing {\em weak anchoring conditions} on the domain boundary, that is, by adding a penalization term to the energy which favors oblique director configurations. We refer the reader to \cite{VL83,Sluckin,TAS12} for the detailed discussion of anchoring within the context of the Landau-de Gennes theory and relevant physical observations. 

We begin by describing the variational problem in mathematical terms, and stating our main result in Theorem~\ref{main}.  Let $\Om\subset\R^2$ be a bounded, simply connected domain with $C^2$ boundary $\Gamma:=\partial\Om$, carrying unit exterior normal vector $\nu$.  Our energy functional is the classical Ginzburg-Landau functional, modified by the addition of a surface energy term which enforces the desired weak anchoring.  Let $\alpha\in (0,{\pi\over 2})$ be fixed throughout the paper.  As usual, we associate $\C\simeq \R^2$, with scalar product $(u,v)=\Re[u\, \bar v]$ and wedge product $u\wedge v=(iu,v)$ for $u,v\in\C$.  Let $g:\ \Gamma\to \mathbb{S}^1\subset\C$ be a given $C^1$ smooth function on the boundary.  We assume that 
$$  \mathcal{D}=\deg(g;\Gamma)>0,  $$
and take a smooth lifting $\gamma: \ \Gamma\to \R$, $g=e^{i\gamma}$.  In the physical context, $g$ would represent the unit normal vector field on $\Gamma$; in the orientable GL or Ericksen models, it would then have degree $\mathcal{D}=1$.  In the reduction from the 3D Landau-de Gennes model the complex order parameter doubles the phase of the director, and so we would have $\mathcal{D}=2$.  (See the discussion below.)  However, we may take $g$ to be any smooth $\mathcal{S}^1$-valued map in our analysis.

Our energy then takes the form:
\begin{equation}
\label{eq:E}
E_{\eps}^{g,\alpha}(u) 
	:= \frac{1}{2} \int_{\Omega} \left( |\grad u|^2 + \frac{1}{2\eps^2} \big( |u|^2-1\big)^2\right)  \, dx 
	  + \frac{\Upsilon}{ 2} \int_{\Gamma}
	   W(u,g)\, ds,
\end{equation}
with boundary anchoring energy density $W$ given by:
\begin{equation}
\label{eq:W}
W(u,g):= {\frac12} (|u|^2-1)^2 + [(u,g)-\cos\alpha]^2 .
\end{equation}
The weak anchoring strength $\Upsilon$ is assumed to depend on the length scale parameter $\eps$,  
$$\Upsilon=\Upsilon(\eps)=\eps^{-s} \quad\text{ 
for $s\in (0,1]$.}
$$

The effect of the weak anchoring may be inferred from the form of $W$.  As $\eps\to 0$ we expect that $W(u_\eps,g)\to 0$ almost everywhere on $\Gamma$.  At points $y\in\Gamma$ at which $W(u_\eps(y),g(y))\to 0$, we would have $|u_\eps|\to 1$ and $(u,g)\to \cos \alpha$, that is, $u_\eps\simeq g\, \exp(\pm i\alpha)$.  
If $g$ represents the unit normal vector field, this is the desired cone condition for a nematic.  If there are no defects on $\Gamma$ then the phase shift $\pm\alpha$ is uniformly chosen on $\Gamma$, and $u_\eps$ will effectively satisfy a Dirichlet boundary condition with degree $\mathcal{D}$, for which there must be interior defects, which will be vortices.
However, energy minimization may prefer to accept defects on $\Gamma$ in order to avoid the energy cost of interior vortices.  In this case, the phase of $u_\eps$ must jump at defect points in order to ``unwind'' its phase so as to have degree zero on $\Gamma$.  The form of $W$ allows the phase to unwind by steps of $2\alpha$, $(2\pi-2\alpha)$ or $2\pi$.  This suggests that there are {\em three} distinct types of boundary defects. The first two are {\em boojums}---defects with fractional degree. In correspondence with the size of the jump in angle, we call these a ``light'' boojum and a ``heavy'' boojum, respectively.  The last type is a boundary vortex, of the sort studied in \cite{ABGS}, with integer degree.
Our result states that for very strong anchoring (larger $s$,) minimizers prefer interior vortices, while for milder anchoring (smaller $s$,) we will obtain light-heavy boojum pairs on $\Gamma$ and no interior vortices.  The threshold value for $s$ will depend on the angle $\alpha$.  In no case are boundary vortices (of integer degree) preferred.  

In order to state our result, we define
\beq\label{cal}
C_{\alpha}:= \left\{\left(\frac{\alpha}{\pi}\right)^2+\left(1-\frac{\alpha}{\pi}\right)^2\right\},
\eeq
a constant which will appear often in our calculations of the energy of boundary defects of solutions.  Note that $\frac12< C_{\alpha}<1$ for all $\alpha\in (0,\pi/2)$.

\begin{theorem}\label{main}
\begin{itemize}
\item[(a)] If $1\ge s>{1\over 2C_\alpha}$, then $\exists \, \DDD $ points, $p_1, \dots, p_{\DDD}\in \Omega$ and a subsequence $\eps_n\to 0$ such that the minimizers $u_{\eps_n}$ of $E_{\eps_n}^{g,\alpha}$ satisfy
$$
u_{\eps_n}\to u_* \text{ in } H^1_{\text{loc}}\cap C^{1,\alpha}_{\text{loc}}(\bar \Omega\setminus\{p_1,\dots,p_{\DDD}\}),
$$
with $u_*$ an $S^1$-valued harmonic map with $W(u,g)=0$ on $\Gamma$, and $p_i$ is a vortex of degree $1, \forall i$.

\item[(b)] If $s<{1\over 2C_\alpha}$, there $\exists \, (2\DDD)$ points $y_1, \tilde y_1, y_2, \tilde y_2, \dots, y_{\DDD}, \tilde y_{\DDD} \in \Gamma$, ordered along the boundary curve, and a subsequence $\eps_n\to 0$ such that the minimizers $u_{\eps_n}$ of $E_{\eps_n}^{g,\alpha}$ satisfy
$$
u_{\eps_n}\to u_* \text{ in } H^1_{\text{loc}}\cap C^{1,\alpha}_{\text{loc}}(\bar \Omega\setminus\{y_1,\tilde y_1, \dots,y_{\DDD},\tilde y_{\DDD} \}),
$$
with $u_*$ an $S^1$-valued harmonic map with $W(u,g)=0$ on $\Gamma\setminus \{y_1,\tilde y_1, \dots,y_{\DDD},\tilde y_{\DDD} \}$, and $y_j, \tilde y_j$ is a boojum pair of total degree $-1$.
\end{itemize}
\end{theorem}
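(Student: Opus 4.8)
\medskip
\noindent\textbf{Strategy of proof.}\
The plan is to carry out the Bethuel--Brezis--H\'elein analysis of Ginzburg--Landau minimizers, adapted to the weak-anchoring surface term $\tfrac{\Upsilon}{2}\int_\Gamma W(u,g)\,ds$ and to the possibility of boundary (``boojum'') defects, the latter in the spirit of the boundary-vortex theory of \cite{ABGS}. The whole argument is organized around a single energy comparison. An interior vortex of degree $\pm1$ costs $\pi|\log\eps|+O(1)$, its core --- where $|u_\eps|$ vanishes --- having the Ginzburg--Landau scale $\eps$. By contrast, a boundary defect that unwinds the boundary phase by an amount $\beta$ costs only $\tfrac{\beta^2}{2\pi}\,s\,|\log\eps|+O(1)$: near such a defect $|u_\eps|$ need not vanish, and the relevant ``core'' is the boundary arc on which $(u,g)=\cos\alpha$ fails, whose length scales like $\Upsilon^{-1}=\eps^{s}$, this being fixed by balancing the logarithmically divergent gradient energy against the anchoring energy $\tfrac{\Upsilon}{2}\int_\Gamma W\sim\Upsilon\cdot(\mathrm{core\ length})$; since $\log(\eps^{-s})=s|\log\eps|$, this yields the factor $s$, and --- after optimizing over the partitions of the required total unwinding into admissible steps --- the threshold $\tfrac{1}{2C_\alpha}$.

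I would begin with the routine parts: existence of minimizers $u_\eps$ by the direct method (coercivity on $H^1(\Om)$, weak lower semicontinuity, and continuity of the trace term along $H^1$-weakly convergent sequences), the Euler--Lagrange system with its natural Robin-type boundary condition, and the standard consequences $u_\eps\in C^\infty(\overline{\Om})$, $|u_\eps|\leq1$ by the maximum principle, and $E_\eps^{g,\alpha}(u_\eps)\leq C|\log\eps|$. The decisive step is then the \emph{sharp} upper bound, proved by exhibiting two competitors. The first, $v_\eps^{\mathrm{int}}$, has $\DDD$ interior vortices of degree $+1$ at generic points and agrees smoothly with $g\,e^{i\alpha}$ on $\Gamma$, so that $W(v_\eps^{\mathrm{int}},g)\equiv0$ on $\Gamma$ and $E_\eps^{g,\alpha}(v_\eps^{\mathrm{int}})=\pi\DDD|\log\eps|+O(1)$. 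The second, $v_\eps^{\mathrm{bd}}$, has no interior zeros; on $\Gamma$ it equals $g\,e^{\pm i\alpha}$ on $2\DDD$ alternating arcs, with $\DDD$ ``light'' transitions (phase jump $2\alpha$) and $\DDD$ ``heavy'' transitions (phase jump $2\pi-2\alpha$), each regularized over a boundary arc of length $\eps^{s}$, and it is extended into $\Om$ as the (degree-zero) $S^1$-valued harmonic map carrying these data; using $(2\alpha)^2+(2\pi-2\alpha)^2=4\pi^2 C_\alpha$ one computes $E_\eps^{g,\alpha}(v_\eps^{\mathrm{bd}})=2\pi\DDD\,C_\alpha\,s\,|\log\eps|+O(1)$. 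Hence $\min E_\eps^{g,\alpha}\leq\min\{\pi\DDD,\ 2\pi\DDD\,C_\alpha\,s\}\,|\log\eps|+O(1)$, which equals $\pi\DDD|\log\eps|+O(1)$ for $s>\tfrac{1}{2C_\alpha}$ and $2\pi\DDD\,C_\alpha\,s\,|\log\eps|+O(1)$ for $s<\tfrac{1}{2C_\alpha}$.

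The heart of the proof is the matching lower bound with simultaneous localization of the defects. A clearing-out ($\eta$-ellipticity) lemma, together with the energy bound, confines $\{|u_\eps|\leq\tfrac12\}$ and the set where $W(u_\eps,g)$ fails to be small to finitely many shrinking balls. A vortex-ball construction (Jerrard, Sandier) bounds the interior contribution below by $\pi\big(\sum_i d_i^2\big)|\log\eps|-C$ in terms of the interior degrees $d_i\in\ZZ$; the analogous construction near $\Gamma$, conveniently organized by locally flattening the boundary and reflecting so that a boundary defect becomes a reflection-symmetric interior one (now carrying the reflected anchoring term), bounds the boundary contribution below by $\tfrac{s}{2\pi}\big(\sum_j\beta_j^2\big)|\log\eps|-C$, where the boundary jumps satisfy $\beta_j\in\pm2\alpha+2\pi\ZZ$ (or $\beta_j\in 2\pi\ZZ$, an ``integer boundary vortex'') and, because the intervening boundary arcs alternate between $g\,e^{+i\alpha}$ and $g\,e^{-i\alpha}$, fall into two alternating families of equal size. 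The topological identity $\deg(u_\eps;\Gamma)=\sum_i d_i=:K$ together with the winding bookkeeping $\sum_j\beta_j=2\pi(K-\DDD)$ reduces the lower bound to minimizing $\pi|K|+\tfrac{s}{2\pi}\sum_j\beta_j^2$ over $K\in\ZZ$ and admissible $\beta_j$; convexity, the arithmetic of $\pm2\alpha$ modulo $2\pi$, and $\tfrac12<C_\alpha<1$ (the last ruling out integer boundary vortices) show this minimum to be $\pi\DDD$, attained only when $K=\DDD$ with no boundary defect, if $s>\tfrac{1}{2C_\alpha}$, and $2\pi\DDD\,C_\alpha\,s$, attained only when $K=0$ with $2\DDD$ boundary defects alternating light and heavy, if $s<\tfrac{1}{2C_\alpha}$. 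Comparison with the sharp upper bound then forces, in case~(a), exactly $\DDD$ interior degree-$+1$ vortices and no boundary defect --- higher multiplicities being excluded because like-signed vortices of a minimizer separate, so a degree $\geq 2$ would cost at least $\pi(\DDD+1)|\log\eps|-C$ --- and, in case~(b), exactly $2\DDD$ boundary defects forming $\DDD$ adjacent light--heavy pairs of total degree $-1$, and no interior vortex.

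Finally one passes to the limit. Away from the finitely many defect points the energy density is locally bounded, hence $|u_\eps|\to1$ and, since $\int_\Gamma W(u_\eps,g)\,ds\leq 2C|\log\eps|/\Upsilon\to0$, also $W(u_\eps,g)\to0$ there; elliptic estimates for the Euler--Lagrange system --- the potential nonlinearity being controlled because $|u_\eps|$ stays locally bounded away from $0$ --- give $H^1_{\mathrm{loc}}\cap C^{1,\alpha}_{\mathrm{loc}}$ bounds and, along a subsequence $\eps_n\to0$, convergence to a limit $u_*$ which is an $S^1$-valued harmonic map with $W(u_*,g)=0$ on $\Gamma$ minus the defect set; this is exactly the content of (a) and (b). I expect the chief obstacle to be the sharp \emph{boundary} lower bound --- showing that a boundary defect unwinding the phase by $\beta$ costs at least $\tfrac{\beta^2}{2\pi}s|\log\eps|-C$ --- because there the operative length is $\eps^{s}$, not $\eps$, so the two scales must be separated cleanly and one must rule out ``hybrid'' boundary defects at which $|u_\eps|$ dips toward $0$; extracting the $O(1)$ term --- the renormalized energy that ultimately pins down the limiting defect positions --- is then a further and more delicate refinement.
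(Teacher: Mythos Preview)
Your overall strategy matches the paper's: sharp upper bound via two competitors, $\eta$-compactness to isolate defects, a vortex-ball lower bound distinguishing interior ($O(\eps)$) from boundary ($O(\eps^s)$) scales, and a combinatorial minimization to identify the optimal defect configuration. A few points deserve correction or comment.

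First, the claim $|u_\eps|\le 1$ by the maximum principle is not correct here. The Robin condition $\partial_\nu u+\Upsilon[(|u|^2-1)u+((u,g)-\cos\alpha)g]=0$ does not immediately yield $\partial_\nu|u|^2<0$ at a boundary maximum with $|u|>1$, since the term $((u,g)-\cos\alpha)(u,g)$ can be negative. The paper obtains only $\|u_\eps\|_\infty\le 2$ directly, and $\limsup\|u_\eps\|_\infty\le 1$ when $\eps\Upsilon_\eps\to 0$ via a blow-up argument. This is harmless for the rest of your outline.

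Second, your reflection approach to the boundary lower bound differs from the paper's, which works directly in half-annuli in polar coordinates, writing $\psi=\gamma_0+2(n-\tau\alpha/\pi)\theta+\phi$ and showing the ``excess phase'' $\phi$ contributes $O(1)$. That step requires controlling $\int_{\Gamma^\pm_{r,R}}|\phi|\,\rho^{-1}d\rho$ via $\sqrt{W(u,g)}$, and this is precisely why the paper includes the $(|u|^2-1)^2$ term in $W$ (see Remark~\ref{extra_term}). Your reflection idea is viable, but note that after reflection the anchoring energy becomes a \emph{line} penalization along the reflection axis, and the sharp constant $\beta^2/(2\pi)$ must come from the half-disk geometry.

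Third, and this is the most substantive omission, you have not addressed interior bad balls that \emph{accumulate on} $\Gamma$ as $\eps\to 0$. Such a ball has seed scale $\eps$ but can only be grown inside $\Omega$ to radius $\mathrm{dist}(p_\eps,\Gamma)\to 0$, after which it must be absorbed into a boundary ball of seed scale $\eps^s$. The paper handles this by running the ball growth with \emph{two} expansion rates, $R_{\mathrm{bdry}}(t)/r_{\mathrm{bdry}}=t=(R_{\mathrm{int}}(t)/r_{\mathrm{int}})^s$, and a merging rule that converts interior balls to boundary balls with suitably adjusted seed size. Your dichotomy ``interior degrees $d_i$ costing $\pi d_i^2|\log\eps|$'' versus ``boundary jumps $\beta_j$ costing $\tfrac{s}{2\pi}\beta_j^2|\log\eps|$'' presupposes each defect sits cleanly in one class; without the two-rate construction the lower bound for migrating vortices is not obviously sharp.

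Finally, your convexity sketch for the minimization is equivalent to the paper's combinatorial Lemma~\ref{newdeglem}, which is proved there by induction on $|D|$; either route works, but one must also check that boundary vortices ($\beta\in 2\pi\mathbb{Z}$, boojum number $\tau=0$) are never optimal, which follows from $C_\alpha<1$.
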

In the critical case $s={1\over 2C_\alpha}$ the situation is more delicate, as interior and boundary defects will have the same energy to highest order $O(|\ln\eps|)$, and one may have coexistence of the two species of defect depending on the geometry of the domain and choice of boundary map $g$.  As in \cite{ABGS} we expect that by introducing a coefficient $\Upsilon=K\eps^{-s}$ in the weak anchoring strength, the cross-over between boundary and interior vortices may be observed by varying $K$ when $s={1\over 2C_\alpha}$, but we do not pursue this direction in the present paper.

As in the classical work \cite{BBH2} on the Ginzburg--Landau model with Dirichlet boundary conditions, the location of the defects may be determined by minimizing a finite dimensional Renormalized Energy.  This will be briefly discussed in Section~\ref{RNE}, after the proof of Theorem~\ref{main}.

A related model is that of a thin ferromagnetic film as obtained in appropriate limiting regime by DeSimone, Kohn, Muller and Otto (\cite{DKMO}). This limiting ferromagnetic thin film was studied by Moser (\cite{Moser}), and by Kurzke (\cite{Ku}) in certain settings. In those problems, they impose  tangential weak anchoring conditions (i.e. $\alpha=0$) and find critical anchoring strength (though with a different critical exponent,) at which boundary vortices are favored over interior vortices. In our case we impose oblique anchoring conditions which reveal boojums defects. 

In the context of nematic liquid crystals, boojums were also observed in \cite{GNSV,GKLNS} at interfaces between the nematic and the isotropic phases. In the limit of the small nematic correlation length, the assumption of a large splay elastic constant in \cite{GNSV,GKLNS} led to the tangency condition of the director on the interface and appearance of boojums. In this setting, the boojums are also associated with interface singularities because the interface location is one of the unknowns of the problem.

The rest of the paper is organized as follows: in Section 2 we describe how to obtain the above variational problem as a thin-film limit of the Landau-de Gennes energy of nematic liquid crystals. In Section 3, we present an upper bound on the energy of minimizers, as well as {\it a priori} pointwise bounds for all solutions of the Euler-Lagrange equations. In Section 4, we present an $\eta$ compactness result adapted to handle boundary defects and use it to define the ``bad balls" and show that they are contained in a finite number of very small balls. Next in Section 5, we classify the ``bad balls" as interior vortex, boundary vortex, light and heavy boojums. In Section 6, we obtain an energy lower bound for each type of defects and prove an important new ``degree Lemma" (Lemma~\ref{newdeglem}) which will be essential in proving the lower bound on the energy of boundary defects in terms of the degree of the boundary data.
In section 7, we put everything together and prove our main theorem, modifying the technique of vortex ball analysis introduced by Jerrard \cite{Jerrard} and Sandier \cite{Sandier}. In Section 8, we formally derive the associated Renormalized Energy and, finally, in Section 9 we present numerical examples of possible defect configurations.

\section{Modeling nematic thin films}
\label{sec:nema}

In this section we motivate our variational problem via the Landau-de Gennes theory of nematic liquid crystals, in a limiting thin-film regime.

\subsection{The $Q$-tensor}
\label{s:qtens}
A nematic liquid crystal occupying a region $\Omega\in\mathbb R^3$ can be described by a $2$-tensor-valued field which can be thought of as the field $Q:\mathbb R^3\to M_{sym}^{3\times3}$ of $3\times  3$ symmetric, traceless matrices  \cite{MN}. It immediately follows that $Q$ has a mutually orthonormal eigenframe $\left\{\mathbf{e}_1,\mathbf{e}_2,\mathbf{e}_3\right\}$ and three real eigenvalues satisfying $\lambda_1+\lambda_2+\lambda_3=0$. The tensor $Q(x)$ represents the second moment of the orientational distribution in $\mathbb S^2$ of the nematic molecules near $x\in\mathbb R^3$, hence its eigenvalues must satisfy the constraints
\begin{equation}
\label{eq:bnds}
\lambda_i\in[-1/3,2/3],\ \mathrm{for}\ i=1,2,3.
\end{equation}

Suppose that $\lambda_1=\lambda_2=-\lambda_3/2.$ Then the liquid crystal is in a {\em uniaxial nematic} state and \begin{equation}Q=-\frac{\lambda_3}{2}\mathbf{e}_1\otimes\mathbf{e}_1-\frac{\lambda_3}{2}\mathbf{e}_2\otimes\mathbf{e}_2+
\lambda_3\mathbf{e}_3\otimes\mathbf{e}_3=S\left(\mathbf{n}\otimes\mathbf{n}-\frac{1}{3}\mathbf{I}\right),\label{uniaxial}
 \end{equation}
 where $S:=\frac{3\lambda_3}{2}$ is the uniaxial nematic order parameter and $\mathbf{n}=\mathbf{e}_3\in\mathbb{S}^2$ is the nematic director. If there are no repeated eigenvalues, the liquid crystal is said to be in a {\em biaxial nematic} state and
\begin{multline}
Q=\lambda_1\mathbf{l}\otimes\mathbf{l}+\lambda_3\mathbf{n}\otimes\mathbf{n}-\left(\lambda_1+\lambda_3\right)\left(\mathbf{I}-\mathbf{l}\otimes\mathbf{l}-\mathbf{n}\otimes\mathbf{n}\right)\\=S_1\left(\mathbf{l}\otimes\mathbf{l}-\frac{1}{3}\mathbf{I}\right)+S_2\left(\mathbf{n}\otimes\mathbf{n}-\frac{1}{3}\mathbf{I}\right),
\label{biaxial}\end{multline}
where $S_1:=2\lambda_1+\lambda_3$ and $S_2=\lambda_1+2\lambda_3$ are biaxial order parameters. 

For so-called {\em thermotropic} liquid crystals nematic states are typically observed at low temperatures. On the contrary, at high temperatures, these materials loose orientational order and become {\em isotropic}. The corresponding state is represented by $Q=0$ so that $\lambda_1=\lambda_2=\lambda_3=0$.

\subsection{Landau-de Gennes model}
Within the $Q$-tensor theory, the bulk elastic energy density of a nematic liquid crystal is given by
\begin{equation}
\label{elastic}
f_e(\nabla Q):=\sum_{j=1}^3\left\{\frac{L_1}{2}{|\nabla Q_j|}^2+\frac{L_2}{2}\left(\dvr{Q_j}\right)^2+\frac{L_3}{2}\nabla Q_j\cdot \nabla Q_j^T\right\},
\end{equation}
while the bulk Landau-de Gennes energy density is
\begin{equation}
\label{eq:LdG}
f_{LdG}(Q):=a\,\mathrm{tr}\left(Q^2\right)+\frac{2b}{3}\mathrm{tr}\left(Q^3\right)+\frac{c}{2}\left(\mathrm{tr}\left(Q^2\right)\right)^2,
\end{equation}
cf. \cite{MN}. Here $Q_j,\,j=1,2,3$ is the $j$-th column of the matrix $Q$ and $A\cdot B=\tr\left(B^TA\right)$ is the dot product of two matrices $A,B\in M^{3\times3}.$ The coefficient $a=a_0\left(T-T_*\right)$ in \eqref{eq:LdG} is temperature-dependent and negative for sufficiently low temperatures, while $c>0$.
The potential \eqref{eq:LdG} is designed to depend only on the eigenvalues of $Q$ and its form guarantees that the isotropic state $Q\equiv 0$ yields the global minimum of $f_{LdG}$ at high temperatures while a uniaxial state of the form \eqref{uniaxial} gives the global minimum when temperature is sufficiently low, cf. \cite{apala_zarnescu_01,MN}. In what follows we set the temperature to be low enough so that the minimizers of $f_{LdG}$ are uniaxial. Note that by adding an appropriate constant to $f_{LdG}$ we can assume the global minimum value of zero for $f_{LdG}.$

Now consider a nematic sample occupying a thin domain $\Omega_h:=\Omega\times(-h,h)\subset\mathbb R^3$, where $\Omega\subset\mathbb R^2$ and $h\ll1$. The equilibrium nematic configuration should minimize the bulk energy subject to the appropriate boundary conditions on $\partial\Omega_h$.  There are two possible alternatives. The first option is to impose Dirichlet boundary conditions on $Q$---also known as strong anchoring conditions---that fix the alignment of nematic molecules on $\partial\Omega_h$. The second option is to consider {\em weak anchoring}, that is, to specify the surface energy on the boundary of the nematic sample. The molecular orientations on the boundary are then determined as a part of the minimization procedure. 

In this paper we consider a two-dimensional variational problem for $Q$ that can be obtained---following \cite{GSM2}---via a rigorous dimension reduction procedure by taking the limit $h\to0$. Briefly, as in \cite{GSM2}, suppose that weak anchoring conditions are specified on the top and the bottom surfaces $\Omega\times\{-h,h\}$ of the nematic film $\Omega_h$. The anchoring energy density has the form
\begin{equation}
\label{fs}
f^{(1)}_s(Q,\hat{z})=\alpha\left(\left[(Q\hat{z}\cdot\hat{z})-\beta\right]^2+{\left|\left(\mathbf{I}-\hat{z}\otimes\hat{z}\right)Q\hat{z}\right|}^2\right),
\end{equation}
for any $Q\in\mathcal A$, where $\alpha>0$, $\beta\in\mathbb{R}$,
\begin{equation}
\label{eq:cala}
\mathcal A:=\left\{Q\in M^{3\times3}_{sym}:\mathrm{tr}\,{Q}=0\right\},
\end{equation}
and $\hat{z}$ is normal to the surface of the film. This form of the anchoring energy requires that a minimizer of $f^{(1)}_s$ has $\hat{z}$ as an eigenvector with corresponding eigenvalue equal to $\beta$. 

On the remaining part $\Gamma\times(-h,h)$ of $\partial\Omega_h$ we impose different weak anchoring conditions with the nonnegative surface energy density $f_s^{(2)}(Q,g),$ where $\Gamma=\partial\Omega,$ the uniaxial data $g\in H^{1/2}(\Gamma\times(-h,h);\mathcal{A})$ does not vary in the direction normal to $\Gamma,$ and $f_s^{(2)}$ is a smooth function of its arguments.

The Landau-de Gennes energy can then be obtained by combining together \eqref{elastic}, \eqref{eq:LdG}, and \eqref{fs} so that
\begin{multline}
\label{energy}
E_h(Q):=\int_{\Omega_h}\left\{f_e(\nabla Q)+f_{LdG}(Q)\right\}\,dV\\+\int_{\Omega\times\{-h,h\}}f_s^{(1)}(Q,\hat{z})\,dS+\int_{\Gamma\times(-h,h)}\tilde f_s^{(2)}(Q,g)\,dS.
\end{multline}

In what follows we will assume that the elastic constants $L_2=L_3=0$ and $L_1=L>0$; this corresponds to the so called equal elastic constants case where the equality of the constants refers to those in the Oseen-Frank model. The elastic energy density we consider is thus given by
\begin{equation}
\label{elastic_eq}
f_e(\nabla Q):=\frac{L}{2}{|\nabla Q|}^2.
\end{equation}

The problem can be nondimensionalized by scaling the spatial coordinates
\[\tilde{x}=\frac{x}{D},\ \tilde{y}=\frac{y}{D},\ \tilde{z}=\frac{z}{h},\ \]
where $D:=\mathrm{diam}(\omega)$. Set $\xi=\frac{L}{2D^2},$ $\delta=\frac{h}{D}$ and introduce $\tilde{f}_e(\nabla Q):=\frac{1}{\xi}f_e(\nabla Q).$ Dropping tildes, we obtain
\[
f_e(\nabla Q): =Q_{im,j}Q_{im,j}+\frac{1}{\delta^2}Q_{im,3}Q_{im,3},
 \]
where the indices $i,m=1,2,3,$ and $j=1,2.$ Rescaling the Landau-de Gennes potential $\tilde{f}_{LdG}(Q):=\frac{\varepsilon^2}{\xi}f_{LdG}(Q)$ and ignoring tildes again gives
\begin{equation}
f_{LdG}(Q)=2A\,\mathrm{tr}\left(Q^2\right)+\frac{4}{3}B\,\mathrm{tr}\left(Q^3\right)+{\left(\mathrm{tr}\left(Q^2\right)\right)}^2,
\end{equation}
where $A=\frac{a}{c},$ $B=\frac{b}{c},$ and $\varepsilon=\sqrt{\frac{2\xi}{c}}.$ We also let $\tilde\alpha=\frac{\alpha}{\xi D}$ and set
\[\tilde{f}_s^{(1)}(Q,\hat z):=\frac{1}{\xi D}f_s^{(1)}(Q,\hat z),\qquad \tilde{f}_s^{(2)}(Q,g):=\frac{1}{\xi D}f_s^{(2)}(Q,g)\]
to obtain the expressions for the nondimensionalized surface energies. 

Finally, introducing the non-dimensional energy $F_\delta[Q]:=\frac{2}{Lh}E[Q]$ and dropping all tildes, we find that
\begin{multline}
\label{nden}
F_\delta(Q)=\int_{\Omega\times(-1,1)}\left(f_e(\nabla Q)+\frac{1}{\varepsilon^2}f_{LdG}(Q)\right)\,dV\\+\frac{1}{\delta}\int_{\Omega\times\{-1,1\}}f^{(1)}_s(Q,\hat z)\,dA+\int_{\Gamma\times\{-1,1\}}f^{(2)}_s(Q,g)\,dA.
\end{multline}

We now define the space 
\begin{equation}
\mathcal{H}:=\left\{Q\in H^1(\Omega\times (-1,1);\mathcal{A}):\; \frac{\partial Q}{\partial z}\equiv 0\;\mbox{a.e}.,\;f_s(Q(x),\hat{z})=0\mbox{ a.e. in }\Omega\right\}
\label{Hg}
\end{equation}
and let $F_0:H^1(\Omega\times (-1,1);\mathcal{A})\to\R$ be given by

 \begin{equation}
   \label{eq:F0}
 F_0(Q):=\left\{
     \begin{array}{ll}
       2\int_\Omega\left\{{|\nabla_{xy} Q|}^2+\frac{1}{\varepsilon^2}f_{LdG}(Q)\right\}\,dx+ 2\int_\Gamma f^{(2)}_s(Q,g)\,dA& \mbox{ if }Q\in \mathcal{H}, \\
       +\infty & \mbox{ otherwise. }
     \end{array}
 \right.
 \end{equation}
 
The following theorem can be proved in the same way as its analog in \cite{GSM2}. 

\begin{theorem}
\label{t1}
Fix $g\in H^{1/2}\left(\partial\Omega;\mathcal{A}\right)$ that does not vary in the direction normal to $\Gamma$. Let $F_\delta$ be given by \eqref{nden}. Then $\Gamma$-$\lim_\delta{F_\delta}=F_0$ in the weak  $H^1$  topology.
Furthermore, if a sequence $\left\{Q_\delta\right\}_{\delta>0}\subset H^1(\Omega\times (-1,1);\mathcal{A})$ satisfies a uniform energy bound $F_\delta[Q_\delta]<C_0$ then there is a subsequence $\{{Q}_{\delta_j}\}$ such that ${Q}_{\delta_j}\rightharpoonup Q$ as $\delta_j\to 0$
for some $Q\in \mathcal{H}.$
\end{theorem}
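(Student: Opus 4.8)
The plan is to verify, following the dimension-reduction scheme of \cite{GSM2}, the three ingredients of $\Gamma$-convergence in the weak $H^1$ topology together with the stated compactness. The $\limsup$ (recovery) inequality is immediate: if $Q\notin\mathcal{H}$ then $F_0(Q)=+\infty$ and nothing is required, while if $Q\in\mathcal{H}$ then $\partial_z Q\equiv 0$ and $f_s^{(1)}(Q,\hat z)=0$ a.e.\ in $\Om$, so the constant sequence $Q_\delta\equiv Q$ satisfies $F_\delta(Q_\delta)=F_0(Q)$ for every $\delta$; indeed the $\delta^{-2}$ and $\delta^{-1}$ terms in \eqref{nden} vanish identically, and integrating the $z$-independent integrands over $z\in(-1,1)$ produces exactly the factor $2$ of \eqref{eq:F0}. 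Hence $Q_\delta\wto Q$ with $\limsup_\delta F_\delta(Q_\delta)\le F_0(Q)$.

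For compactness, assume $F_\delta[Q_\delta]<C_0$. From \eqref{nden} one reads off that $\grad_{xy}Q_\delta$ is bounded in $L^2(\Om\times(-1,1))$ and $f_{LdG}(Q_\delta)$ is bounded in $L^1$; coercivity of the quartic Landau--de Gennes potential then bounds $Q_\delta$ in $L^4$, hence in $L^2$, so $\{Q_\delta\}$ is bounded in $H^1(\Om\times(-1,1);\mathcal A)$. Moreover
$$\int_{\Om\times(-1,1)}|\partial_z Q_\delta|^2\,dV\le C_0\,\delta^2\longrightarrow 0,\qquad \int_{\Om\times\{\pm1\}}f_s^{(1)}(Q_\delta,\hat z)\,dA\le C_0\,\delta\longrightarrow 0.$$
Extract a subsequence $Q_{\delta_j}\wto Q$ in $H^1$. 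Weak lower semicontinuity of $\int|\partial_z\cdot|^2$ forces $\partial_z Q=0$ a.e.; and by the compact trace embedding $H^1(\Om\times(-1,1))\hookrightarrow\hookrightarrow L^2(\Om\times\{\pm1\})$, continuity of the (quadratic) map $f_s^{(1)}(\cdot,\hat z)$, and its at-most-quadratic growth, $f_s^{(1)}(Q_{\delta_j},\hat z)\to f_s^{(1)}(Q,\hat z)$ in $L^1(\Om\times\{\pm1\})$, whence $\int_{\Om\times\{\pm1\}}f_s^{(1)}(Q,\hat z)=0$; since $Q$ is $z$-independent this gives $f_s^{(1)}(Q,\hat z)=0$ a.e.\ in $\Om$, i.e.\ $Q\in\mathcal H$.

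For the $\liminf$ inequality, let $Q_\delta\wto Q$ in $H^1$ and assume $\liminf_\delta F_\delta(Q_\delta)=:L<\infty$; pass to a subsequence realizing $L$ with $\sup_j F_{\delta_j}[Q_{\delta_j}]<\infty$. The compactness argument applies and, by uniqueness of the weak $H^1$ limit, yields $Q\in\mathcal H$, so $F_0(Q)=2\int_\Om\big(|\grad_{xy}Q|^2+\varepsilon^{-2}f_{LdG}(Q)\big)\,dx+2\int_\Gamma f_s^{(2)}(Q,g)\,dA$. Discarding the nonnegative $\delta^{-2}$- and $\delta^{-1}$-penalty terms in \eqref{nden}, invoking weak lower semicontinuity of the Dirichlet part $\int_{\Om\times(-1,1)}|\grad_{xy}\cdot|^2$, and passing to the limit in the potential and lateral-surface integrals via the subcritical Sobolev embedding $H^1(\Om\times(-1,1))\hookrightarrow L^p$ ($p<6$) and the corresponding trace embeddings — which upgrade $Q_{\delta_j}\to Q$ to strong convergence on the $L^p$-spaces where $f_{LdG}$ and $f_s^{(2)}$ act continuously — one obtains $L=\lim_j F_{\delta_j}(Q_{\delta_j})\ge F_0(Q)$, the factors $2$ again coming from $z$-independence of the limit.

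The step I expect to be the main obstacle is the second half of the compactness argument: forcing the limit $Q$ to lie in $\mathcal H$, i.e.\ to satisfy $\partial_z Q=0$ (driven by the $\delta^{-2}$ term) \emph{and} $f_s^{(1)}(Q,\hat z)=0$ a.e.\ (driven only by the weaker $\delta^{-1}$ surface penalty), which requires carrying a weak-$H^1$ limit through a boundary trace and through the nonlinearity $f_s^{(1)}$. A related technical point — needed both there and in the $\liminf$ inequality — is controlling the growth of the nonlinearities $f_{LdG}$ and $f_s^{(2)}$ so that weak $H^1$ convergence genuinely passes into those integrals; this is resolved using coercivity of the quartic bulk potential (which buys higher integrability of $Q_\delta$) together with the subcritical Sobolev and trace embeddings in dimensions $3$ and $2$, exactly as in \cite{GSM2}.
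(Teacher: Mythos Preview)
The paper does not actually supply a proof of this theorem; it simply states that it ``can be proved in the same way as its analog in \cite{GSM2}.'' Your proposal carries out precisely that program --- recovery by constant sequences, compactness from the $\delta^{-2}$ and $\delta^{-1}$ penalties together with coercivity of $f_{LdG}$, and the $\liminf$ inequality via weak lower semicontinuity of the Dirichlet part plus compact Sobolev/trace embeddings for the nonlinear terms --- so your approach is exactly the one the paper defers to.
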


From now on we use the following representation of $Q\in H$ invoked, for example, in \cite{GSM} and \cite{BPP}:
\begin{equation}
  \label{eq:pr}
  Q=\left(
    \begin{array}{ccc}
      p_1-\frac{\beta}{2} & p_2 & 0 \\
      p_2 & -p_1-\frac{\beta}{2} & 0 \\
      0 & 0 & \beta
    \end{array}
\right).
\end{equation}
It is a convenient change of variables in the setting when one eigenvector of the $Q$-tensor is parallel to the $z$-axis. For simplicity, we also assume that $\beta=-1/3$ and that a uniaxial tensor minimizing $W$ has eigenvalues $-1/3,\ -1/3,$ and $2/3$. Then 
\begin{equation}
  \label{eq:pr1}
  Q(p_1,p_2)=\left(
    \begin{array}{ccc}
      \frac{1}{6}+p_1 & p_2 & 0 \\
      p_2 & \frac{1}{6}-p_1 & 0 \\
      0 & 0 & -\frac{1}{3}
    \end{array}
\right).
\end{equation}
and
\[f_{LdG}(Q)={\left(\tr Q^2\right)}^2-\frac{4}{3}\tr Q^3-\frac{2}{3}\tr Q^2+\frac{8}{27},\]
where the constant $8/27$ was added to ensure that the minimum value of $W$ is equal 0. The potential function can now be written as 
\[\tilde f_{LdG}(p):=W(Q(p))=\frac{1}{4}{\left(4{|p|}^2-1\right)}^2\]
in terms of $p=(p_1,p_2)$. Dropping the subscript $xy$ in \eqref{eq:F0}, we also have that 
\[
{|\nabla Q|}^2=2{|\nabla p|}^2,
\]
so that the bulk contribution to \eqref{eq:F0} takes the form
\begin{equation}
\label{eq:11}
\int_\Omega\left\{{4|\nabla p|}^2+\frac{1}{2\varepsilon^2}{\left(4{|p|}^2-1\right)}^2\right\}\,dx.
\end{equation}

With a slight abuse of notation, we set $f_s^{(2)}(p,g):=f_s^{(2)}(Q(p_1,p_2),g),$ where $g:\partial\Omega\to\mathbb{S}^1$ is fixed. In order to establish the form of $f_s^{(2)}$, we appeal to the Rapini-Papoular form of the surface energy \cite{Sluckin} that in the Oseen-Frank director description can be written as 
\begin{equation}
\label{eq:RP}
\sigma{\left((n\cdot g)^2-\cos^2{\left(\frac{\alpha}{2}\right)}\right)}^2.
\end{equation}
Here $n$ is the nematic director and $\frac{\alpha}2$ is a preferred angle between the director $n$ and the uniaxial data $g$ on the boundary with $\alpha\in (0,\pi)$. We now recall the relationship between the director $n$ and the uniaxial tensor $Q$. Because we assumed that $Q$ is given by \eqref{eq:pr1}, the largest eigenvalue minimizing the potential energy is $2/3$. If the director $n=(n_1,n_2, n_3)$ lies in the $xy$-plane, we have that $n_3=0$ and
\[Q=n\otimes n-\frac{1}{3}I,\]
so that 
\begin{equation}
\label{eq:pn}
n\otimes n=\left(
    \begin{array}{ccc}
      n_1^2 & n_1n_2 & 0 \\
      n_1n_2 & n_2^2 & 0 \\
      0 & 0 & 0
    \end{array}
\right)=\left(
    \begin{array}{ccc}
      \frac{1}{2}+p_1 & p_2 & 0 \\
      p_2 & \frac{1}{2}-p_1 & 0 \\
      0 & 0 & 0
    \end{array}
\right).
\end{equation}
Then
\begin{multline}{(n\cdot g)}^2=(n\otimes n)\cdot (g\otimes g)=\left(\frac{1}{2}+p_1\right)g_1^2+2p_2g_1g_2+\left(\frac{1}{2}-p_1\right)g_2^2\\=\frac{1}{2}+\left(g_1^2-g_2^2\right)p_1+2g_1g_2p_2=\frac{1}{2}+p\cdot \hat g,\end{multline}
where $\hat{g}=\left(g_1^2-g_2^2,2g_1g_2\right)$. It follows that we can write \eqref{eq:RP} as
\begin{equation}
\label{eq:RP2}
f_s^{(2)}(p,\hat g)=\sigma{\left(p\cdot \hat g-\frac{1}{2}\cos{\alpha}\right)}^2.
\end{equation}
This choice is well-motivated physically and clearly favors the desired cone condition for the angle of the director for nematic directors of fixed length, $2|p|=1$. However, when relaxing this constraint via the Ginzburg-Landau functional this boundary energy effectively does not enforce the angle condition when $2|p|<1$.  Indeed, in order to obtain the desired behavior at boundary defects it is necessary to add a term for which the boundary energy is minimized when $|p|=1/2$ lies on the cone of aperture $\alpha$ with the axis that coincides with the normal to $\Gamma.$ With this observation in mind, we replace \eqref{eq:RP2} with
\begin{equation}
\label{eq:RP3}
f_s^{(2)}(p,\hat g)=\frac{\sigma}{4}\left[{\frac12}(|2p|^2-1)^2+{\left(2p\cdot \hat g-\cos{\alpha}\right)}^2\right].
\end{equation}

Defining $u:=2p$ and $E_{\varepsilon}^{g,\alpha}[u]:=\frac{1}{2}F_0[Q(u/2)]$, dropping the hat in $\hat g$, and denoting $\Upsilon:=\sigma/2$ we arrive at the expression \eqref{eq:E} for the energy $E_{\varepsilon}^{g,\alpha}$.

Note that the weak anchoring condition is now coercive: using complex notation, the condition \eqref{eq:RP2} when $u\in \mathbb{S}^1\subset \C$ says that the angle between $\hat g=g^2$ and $u$ is twice that of the angle between the director $n$ and $g$, which is consistent since the phase of $u$ is doubled compared to that of $n$. 

We also note that the main theorem is stated for $\alpha \in (0,\frac{\pi}2)$. The case $\alpha\in (\frac{\pi}2, \pi)$ follows directly from this case, as will be mentioned in the course of the proof.

\section{Upper bounds}
In this section we prove two fundamental estimates:  a rough upper bound on the energy of minimizers, and {\it a priori} pointwise bounds for all solutions of the Euler-Lagrange equations,
\begin{equation}\label{EL}
\left.
\begin{gathered}
-\Delta u + \frac{1}{\eps^2}(|u|^2-1)u =0, \quad\text{in $\Omega$},\\
\frac{\partial u}{\partial \nu} + \Upsilon_{\eps} \Big( (|u|^2-1)u + [(u,g)-\cos\alpha] g\Big)=0, \quad\text{on $\Gamma$},
\end{gathered}\right\}
\end{equation}
where $\Upsilon_{\eps}=\eps^{-s}$, with $0<s\le 1$.

\begin{lemma}\label{upperbound}
Let $g=e^{i\gamma}:\Gamma\to S^1$ be a $C^1$ smooth map, with
$$\DDD=\deg(g;\Gamma)>0,\text{ and } E_\eps^{g,\alpha}:=\min_{H^1(\Omega)}E_\eps^{g,\alpha}.
$$

Recall $C_\alpha=(\frac{\alpha}{\pi})^2+(1-\frac{\alpha}{\pi})^2$, $\alpha\in (0,\frac{\pi}2)$,
\begin{enumerate}
\item[(i)]
If $sC_\alpha\ge \frac12$
then
\beq E_\eps^{g,\alpha}\le \pi\DDD|\ln\eps|+C_1\label{ub1}\eeq

\item[(ii)] 
If $s C_\alpha<\frac12$, then
\beq E_\eps^{g,\alpha}\le 2\pi \DDD s C_\alpha |\ln\eps| +C_1 \label{ub2}\eeq
\end{enumerate}

\end{lemma}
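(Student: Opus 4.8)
The plan is to prove each estimate by exhibiting an explicit comparison map $u\in H^1(\Omega)$ and bounding $E_\eps^{g,\alpha}(u)$ from above; since $E_\eps^{g,\alpha}=\min_{H^1(\Omega)}E_\eps^{g,\alpha}\le E_\eps^{g,\alpha}(u)$ this suffices. The two constructions realize the two competing defect scenarios: $\DDD$ interior vortices for (i), and $\DDD$ boojum pairs on $\Gamma$ for (ii). For (i) I would use the Bethuel--Brezis--H\'elein interior-vortex construction adapted to the oblique data: fix $\DDD$ interior points $a_1,\dots,a_\DDD$, pairwise separated and at fixed distance from $\Gamma$; prescribe $u=g\,e^{i\alpha}$ on $\Gamma$, which has $|u|=1$ and $(u,g)=\Re[e^{i\alpha}]=\cos\alpha$, hence $W(u,g)\equiv0$ on $\Gamma$ and zero surface energy; and (since $\deg(g\,e^{i\alpha};\Gamma)=\DDD$) extend $u$ into $\Omega$ as the canonical $S^1$-valued harmonic map with winding $+1$ around each $a_i$, inserting a radial Ginzburg--Landau profile in each $B_\eps(a_i)$. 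The classical computation then gives $E_\eps^{g,\alpha}(u)=\pi\DDD|\ln\eps|+O(1)$. This uses no restriction on $s$; the hypothesis $sC_\alpha\ge\frac12$ merely records that in this range the interior competitor is the cheaper one.

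For (ii) the competitor is zero-free. Choose $2\DDD$ points $y_1,\tilde y_1,\dots,y_\DDD,\tilde y_\DDD$ on $\Gamma$ in this cyclic order and set $u=g\,e^{i\psi}$ with $|u|\equiv1$, so the potential term vanishes and the surface density reduces to $[\cos\psi-\cos\alpha]^2$. Away from small arcs around these $2\DDD$ points take $\psi$ locally constant with $\cos\psi=\cos\alpha$, alternating between values $\equiv-\alpha$ on each arc $(y_j,\tilde y_j)$ and $\equiv\alpha$ on each arc $(\tilde y_j,y_{j+1})$; as a continuous (multivalued) lifting along $\Gamma$, let $\psi$ drop by $2\alpha$ across each $y_j$ and by $2\pi-2\alpha$ across each $\tilde y_j$ — the ``long way'' in the latter case. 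Then $\psi$ changes by $-2\pi\DDD$ around $\Gamma$, so $\deg(u;\Gamma)=\DDD-\DDD=0$, which is precisely what permits a globally defined $u$ with no interior defect; one extends $u$ into $\Omega$ keeping $|u|=1$, equal to the local profiles below near the defects and bounded (e.g. the harmonic extension of the lifting) elsewhere.

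Near a jump point with jump magnitude $\beta$ (so $\beta=2\alpha$ at each $y_j$, $\beta=2\pi-2\alpha$ at each $\tilde y_j$) I would work in a half-disk $B_\rho^+$ and take $\psi=\psi_{\mathrm{left}}+\tfrac{\beta}{\pi}\,\theta\,\chi(r/\ell)$ with $\chi(t)=\min(t,1)$ and a cutoff $\ell$ to be chosen: for $r>\ell$ this is the degree-$\tfrac{\beta}{2\pi}$ boundary profile with energy density $\sim\tfrac{\beta^2}{2\pi r^2}$, while for $r<\ell$ the $\theta$-dependence is damped (making $u\in H^1$) and the boundary phase on $\Gamma\cap B_\ell^+$ interpolates between the two consecutive zeros of $[\cos\psi-\cos\alpha]^2$. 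A direct calculation gives, over $B_\rho^+$, bulk energy $\tfrac{\beta^2}{2\pi}\ln(\rho/\ell)+O(1)$ and surface energy $\lesssim\Upsilon\ell$; balancing these via $\ell\sim\Upsilon^{-1}=\eps^{s}$ yields $\tfrac{\beta^2}{2\pi}\,s|\ln\eps|+O(1)$ per jump point. Over a light--heavy pair, $(2\alpha)^2+(2\pi-2\alpha)^2=4\pi^2C_\alpha$, so each pair contributes $2\pi C_\alpha\,s|\ln\eps|+O(1)$; adding the $O(1)$ bulk energy of $\nabla g$ and of the transitions away from the cores, over all $\DDD$ pairs, gives $E_\eps^{g,\alpha}(u)=2\pi\DDD\,sC_\alpha|\ln\eps|+O(1)$.

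The energy bookkeeping is routine; the care goes into the $O(1)$ remainder. One checks that the cross terms $\int\nabla g\cdot(\,\cdot\,)\,\nabla\psi$ near each $y_j,\tilde y_j$ are $O(1)$ — true because $|\nabla\psi|\sim1/r$ is integrable in $\R^2$ — that the damped inner profile has bulk energy $O(1)$ uniformly in $\eps$, and that the choice $\ell\sim\eps^s$ keeps $\Upsilon\ell$ bounded; this last point is where $s<1$ (so $\eps^s\gg\eps$) enters, and where the factor $s$ in the bound is produced. One also checks that patching the local profiles to the locally constant boundary data creates no spurious zeros or winding. Finally the precise constant $C_\alpha$ deserves a double-check: it is forced by the only two admissible jump sizes, $2\alpha$ and $2\pi-2\alpha$ (the gaps between consecutive zeros of the reduced surface density $[\cos\psi-\cos\alpha]^2$), together with the requirement that each light--heavy pair unwind one full unit of $\deg(g;\Gamma)=\DDD$.
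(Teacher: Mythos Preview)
Your proposal is correct and follows essentially the same approach as the paper. For (i) the paper simply invokes the Bethuel--Brezis--H\'elein minimizer with Dirichlet data $ge^{i\alpha}$ (so $W\equiv0$ on $\Gamma$), which amounts to the interior-vortex construction you describe; for (ii) the paper also builds an $S^1$-valued competitor with $2\DDD$ alternating light/heavy boojums, linear-in-$\theta$ phase interpolation in half-disks, a cutoff at scale $\eps^s$ making the surface term $\Upsilon\cdot O(\eps^s)=O(1)$, and a degree-zero extension away from the cores---only cosmetic differences from your version (the paper interpolates the full phase $\gamma\pm\alpha$ rather than factoring $u=ge^{i\psi}$, and uses a smooth cutoff to a constant rather than your linear damping $\chi(r/\ell)=\min(r/\ell,1)$).
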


\begin{proof}

In case (i)  we let $v_\eps$ be the energy minimizer of the standard Ginzburg-Landau functional with Dirichlet boundary conditions $u\big|_\Gamma = g e^{i\alpha}$ (so as to make the boundary anchoring energy vanish.) The bound \eqref{ub1}then follows from the work of Bethuel-Brezis-H\'elein \cite{BBH2} since $E_\eps^{g,\alpha}$ becomes the Ginzburg-Landau functional.

In case (ii), we will construct an $S^1$-valued test function with boundary defects. Note that in that case, the upper bound in \eqref{ub2} is smaller than the bound \eqref{ub1}. The construction follows Kurzke \cite{Ku} (see also [\cite{ABGS}, Lemma 3.1]) although our boundary condition is quite different. {This construction is also very helpful in understanding what minimizers should look like.}

Choose $2\DDD$ points $q_1,\dots, q_{2\DDD}\in \Gamma$, well separated, and fix $R>0$ with $R<\frac12|q_i-q_j|\ \forall i\neq j$. We order the points along $\Gamma$ so that the index of $q_i$ increases as $\Gamma$ is traced out counterclockwise. For each $i$ we define $v_\eps^i$ in $\omega_R(q_i)=B_R(q_i)\cap\Omega$ via polar coordinates $(\rho,\theta)$ centered at $q_i$ with $\theta$ measured from the oriented unit tangent $\tau$ to $\Gamma$ at $q_i$. Since $\Gamma$ is smooth, by reducing $R$ (if necessary) we may ensure that $\omega_R(q_i)$ is a polar rectangle, and nearly a half-disk:
$$\omega_R(q_i)=\{(\rho,\theta)|\ \theta_+(\rho)<\theta<\theta_-(\rho), 0<\rho<R\}$$
with $\theta_{\pm}\in C^1$ and $|\theta_+(\rho)|\le c\rho, \ |\pi-\theta_-(\rho)|\le c\rho$.


The odd $q_{2j-1}$ will be ``light" boojums, with phase decreasing by $2\alpha$, while the even $q_{2j}$ will be ``heavy" boojums, with phase decreasing by $2\pi-2\alpha$. Consider the ``light" case; the ``heavy" case will be essentially the same, except for the coefficient.

With $g=e^{i\gamma}$, define phases for $v_\eps^i$ on the two components of $\Gamma\cap B_R(q_i)\setminus\{q_i\}$ around a point $q_i$ (see Figure~\ref{fig:Ar},)
parametrized by
$$\Gamma^{\pm}=\{(\rho,\theta_{\pm}(\rho)):\ 0<\rho\le R\},$$
as follows: let
$$h_{\pm}(\rho) =\gamma(\rho,\theta_{\pm}(\rho))\mp\alpha$$
and
$$\psi(\rho,\theta)=h_+(\rho)\frac{\theta-\theta_-(\rho)}{\theta_+(\rho)-\theta_-(\rho)} + h_-(\rho)\frac{\theta-\theta_+(\rho)}{\theta_-(\rho)-\theta_+(\rho)},$$
which linearly interpolates between them. We introduce a cut-off near $q_i$: $\chi_\eps(\rho) \in C^\infty, $
$$0\le \chi_\eps(\rho)\le 1, \chi_\eps(\rho)=0 \text{ for } 0\le \rho<\eps^s, \chi_\eps(\rho)=1 \text{ for } \rho\ge 2\eps^s, |\nabla\chi_\eps(\rho)|\le c\eps^{-s}.$$

\begin{figure}[!htbp]
\begin{center}
\includegraphics*[width=200pt]{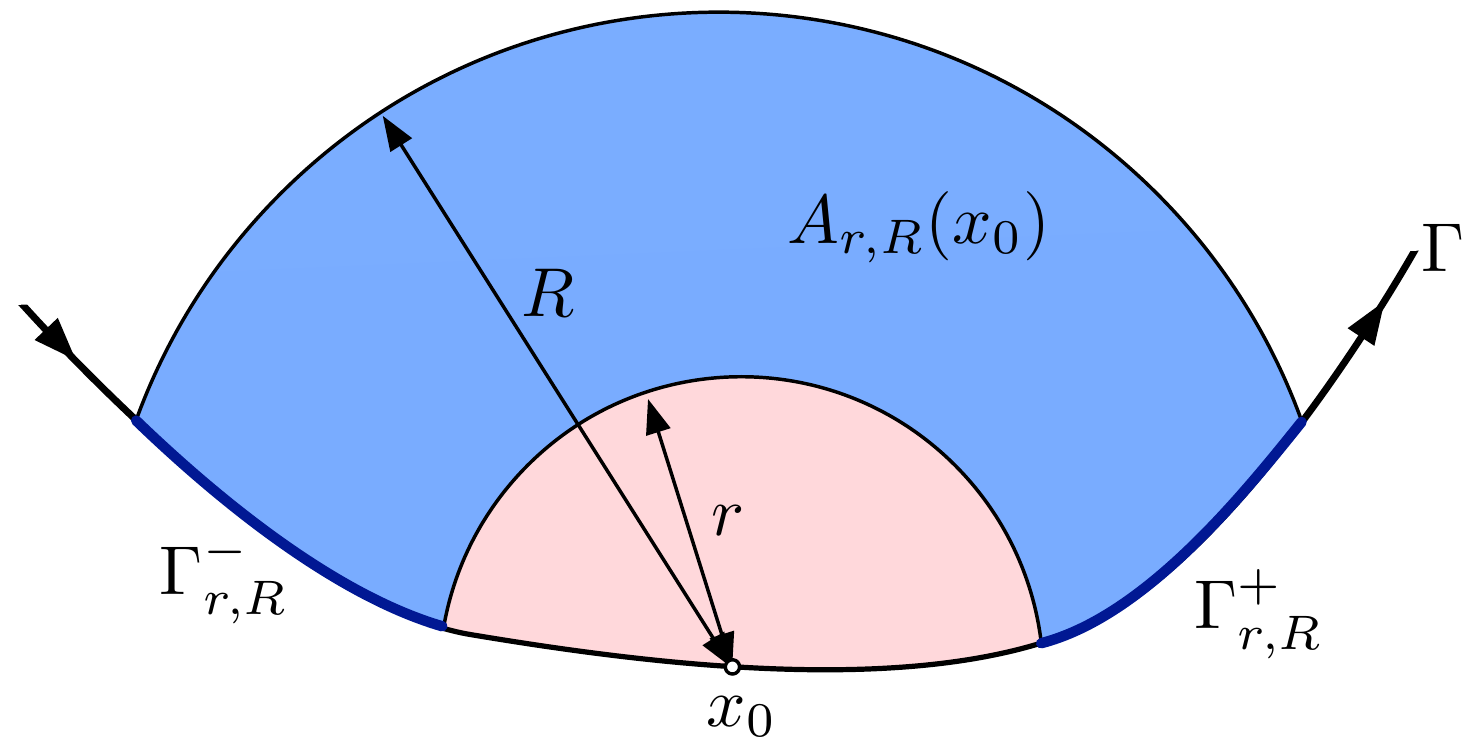}
\end{center}
\caption{Near a boundary point $x_0\in\Gamma$, the disk $\omega_R(x_0)$ and annulus $A_{r,R}$, which separates the boundary $\Gamma\cap A_{r,R}$ into two arcs, $\Gamma^\pm_{r,R}$.}\label{fig:Ar}
\end{figure}

Then we define $v_\eps^i$ in $\omega_R(q_i)$ with $i$ odd by:
$$
v_\eps^i(\rho,\theta)=\exp\left\{ i[\chi_\eps(\rho)\psi(\rho,\theta)+
(1-\chi_\eps(\rho))\gamma(q_i)]\right\}
$$
Note that on $\Gamma^+\setminus B_{2\eps^s}(q_i)$, $v_\eps^i=g\, e^{-i\alpha}$ and on $\Gamma^-\setminus B_{2\eps^s}(q_i)$, $v_\eps^i=g\, e^{i\alpha}$, so that $W(v_\eps^i, g)=0$ on $\Gamma^{\pm}\setminus B_{2\eps^s}(q_i)$.
In particular,
$$\Upsilon_\eps\int_{\Gamma\cap B_R(q_i)} W(v_\eps^i, g)\, ds\le O(1).$$

Also, $|v_\eps^i|=1$ in $\omega_R(q_i)$, so
$$\eps^{-2}\int_{B_R(q_i)} (|v_\eps^i|^2-1)^2\, dx =0.$$

Furthermore, $v_\eps^i$ is smooth and $\eps$-independant in $\omega_R(q_i)\setminus\omega_{\eps^s}(q_i)$
with
$$\int_{\omega_R(q_i)}|\partial_\rho v_\eps^i|^2\le C \ \text{and} \ \quad\int_{\omega_{\eps^s}(q_i)}|\partial_\theta v_\eps^i|^2\le C
$$
so the main contribution to the gradient energy is via $|\partial_\theta v_\eps^i|$ in $\omega_R(q_i)\setminus\omega_{\eps^s}(q_i)$:

\begin{align*}
\frac12\int_{\omega_R(q_i)\setminus\omega_{\eps^s}(q_i)} \frac1{\rho^2}
|\partial_\theta v_\eps^i|^2\, dx &=
\frac12 \int_{\omega_R(q_i)\setminus\omega_{\eps^s}(q_i)} \chi_\eps^2(r)|\partial_\theta \psi(\rho,\theta)|^2\, \frac1{\rho^2}\, dx\\
&\le \frac12\int_{\omega_R(q_i)\setminus\omega_{\eps^s}(q_i)}
\frac{(h_+(\rho)-h_-(\rho))^2}{(\theta_+(\rho)-\theta_-(\rho))^2} \frac1{\rho^2}\, dx\\
&\le\frac12 \int_{\eps^s}^R \frac{(h_+(\rho)-h_-(\rho))^2}{(\theta_+(\rho)-\theta_-(\rho))} \frac1{\rho}\, d\rho\\
&\le \frac12 \int_{\eps^s}^R \frac{((2\alpha)+c\rho)^2}{(\pi-c\rho)}\frac1{\rho}\, d\rho\\
&\le 2\pi\left(\frac{\alpha}{\pi}\right)^2\ln(\frac{R}{\eps^s}) + O(1),
\end{align*}
for $i$ odd. Thus we have
$$E_\eps(v_\eps^i;\omega_R(q_i))\le 2\pi\left(\frac{\alpha}{\pi}\right)^2 s |\ln({\eps})| + C,$$
for odd $i$. When $i$ is even , we modify $h^{\pm}$ to
$$\tilde h^-(\rho)=\gamma(\rho, \theta_-(\rho))-\alpha;\ \tilde h^+(\rho)=\gamma(\rho, \theta_+(\rho))+\alpha-2\pi,$$
and follow the same estimates to arrive at:
$$E_\eps(v_\eps^i;\omega_R(q_i))\le 2\pi\left(\frac{\pi-\alpha}{\pi}\right)^2 s |\ln({\eps})| + C.$$

Now, consider the ($\eps$-independent) domain,
$\tilde \Omega = \Omega \setminus \bigcup_{i=1}^{2\DDD}B_R(q_i).$
Define $\tilde g:\partial\tilde\Omega\to S^1$ by:

$$\tilde g=\left\{\begin{aligned}
g\, \text{ on } \Gamma\setminus \bigcup_{i=1}^{2\DDD}B_{{2R}}(q_i)\\
v_\eps^i \text{ on } \partial B_{2R}(q_i)\cap \Omega.
\end{aligned}
\right.
$$

By the construction of $v_\eps^i$, the function $\tilde g$ is $\eps$-independent, piecewise $C^1$, continuous, and deg$(\tilde g;\partial\tilde\Omega)=0$. 

Therefore we can find $\tilde v\in H^1_{\tilde g}(\tilde\Omega)$ with
$$E_\eps(\tilde v;\tilde \Omega)\le C.$$
Hence, setting
$$v_\eps=\left\{\begin{aligned}
&\tilde v\, \text{ in } \tilde \Omega\\
&v_\eps^i \text{ in } \omega_R(q_i),
\end{aligned}
\right.
$$
we have $v_\eps\in H^1(\Omega)$ with
$$E_\eps(v_\eps)\le 2\pi \DDD s \left( \left(\frac{\alpha}{\pi}\right)^2+\left(1-\frac{\alpha}{\pi}\right)^2\right)|\ln\eps| + C,$$
which is the desired upper bound \eqref{ub2} and this ends the proof of the Lemma.


\end{proof}

Next we prove the following pointwise upper bounds on solutions to \eqref{EL}.

\begin{lemma}\label{apriori}
Let $u_\eps$ be any solution of \eqref{EL}.  
\begin{enumerate}
\item[(i)] Suppose that $\eps \Upsilon_\eps\le C$.
Then $||u_\eps||_\infty\le 2$ and there exists  a constant $C_1=C_1(\Omega)>0$ so that $|\nabla u_\eps|\le C_1/\eps$, for all $x\in\Omega$. 
\item[(ii)] If we further assume that $\eps\Upsilon_\eps\to 0$ as $\eps\to 0$, then $\limsup_{\eps\to 0} ||u_\eps||_\infty\le 1$.
\end{enumerate}
\end{lemma}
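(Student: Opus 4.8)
The plan is to prove both parts by applying the maximum principle to $w_\eps:=|u_\eps|^2$, supplemented by an $\eps$-rescaling (blow-up) argument near $\Gamma$. First I would note that, for each fixed $\eps$, any $H^1$ solution of \eqref{EL} is $C^\infty$ in $\Om$ and $C^{1,\gamma}$ up to $\Gamma$: one bootstraps the interior equation using the Sobolev embedding in dimension two, and then applies boundary Schauder estimates for the Robin problem with $C^1$ datum $g$. This makes all the pointwise computations below legitimate.

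For the $L^\infty$ bound in (i): from the bulk equation,
$$
\Delta w_\eps = 2|\nabla u_\eps|^2 + \tfrac{2}{\eps^2}\big(|u_\eps|^2-1\big)|u_\eps|^2 \ \geq\ \tfrac{2}{\eps^2}\big(w_\eps-1\big)w_\eps \quad\text{in }\Om,
$$
and from the boundary condition in \eqref{EL},
$$
\frac{\partial w_\eps}{\partial \nu} = 2(u_\eps,\partial_\nu u_\eps) = -2\Upsilon_\eps\Big[\big(|u_\eps|^2-1\big)|u_\eps|^2 + \big((u_\eps,g)-\cos\alpha\big)(u_\eps,g)\Big] \quad\text{on }\Gamma.
$$
Let $x_0\in\bar\Om$ be a maximum point of $w_\eps$. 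If $x_0\in\Om$, then $\Delta w_\eps(x_0)\leq 0$, so the first inequality forces $w_\eps(x_0)\leq 1$. If $x_0\in\Gamma$, then $\partial_\nu w_\eps(x_0)\geq 0$, so the identity on $\Gamma$ forces $\big(w_\eps(x_0)-1\big)w_\eps(x_0)\leq -\big((u_\eps,g)-\cos\alpha\big)(u_\eps,g)\leq\tfrac14\cos^2\alpha\leq\tfrac14$, where I used $|(u_\eps,g)|\leq|u_\eps|$ (as $g\in\mathbb S^1$) and completed the square; hence $w_\eps(x_0)\leq\tfrac12(1+\sqrt2)$. In either case $\|u_\eps\|_\infty^2=\max_{\bar\Om}w_\eps\leq\sqrt2\leq 2$. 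Note this step uses no hypothesis on $\eps\Upsilon_\eps$.

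For the gradient bound in (i): fix $x_0\in\Om$ and rescale $\tilde u(y):=u_\eps(x_0+\eps y)$, so that $-\Delta\tilde u=(1-|\tilde u|^2)\tilde u$ with right-hand side bounded by a universal constant thanks to the $L^\infty$ bound just proved. If $\mathrm{dist}(x_0,\Gamma)\geq\eps$, interior $W^{2,p}$/Schauder estimates on $B_1(0)$ give $|\nabla\tilde u(0)|\leq C$, i.e.\ $|\nabla u_\eps(x_0)|\leq C/\eps$. If $\mathrm{dist}(x_0,\Gamma)<\eps$, I would flatten $\Gamma$ near the nearest boundary point using a $C^2$ chart and rescale by $\eps$: the operator stays uniformly elliptic with coefficients converging to constants (the lower-order terms from the flattening are scaled down by $\eps$), and the Robin condition becomes $\partial_{\tilde\nu}\tilde u+\eps\Upsilon_\eps\big((|\tilde u|^2-1)\tilde u+[(\tilde u,g)-\cos\alpha]g\big)=0$, which is an oblique-derivative condition with bounded coefficients because $\eps\Upsilon_\eps\leq C$ and the bracket is bounded. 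Boundary $W^{2,p}$ estimates for oblique-derivative problems then give $|\nabla\tilde u|\leq C$ near the rescaled boundary point, hence $|\nabla u_\eps(x_0)|\leq C_1/\eps$, with $C_1$ depending only on $\Om$ (through the $C^2$ character of $\Gamma$) and on the constant $C$.

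For (ii), argue by contradiction: suppose $\eps_n\to 0$ and $M_n:=\|u_{\eps_n}\|_\infty^2\to M_*>1$. By the interior alternative in the maximum principle argument above, for $n$ large the maximum of $w_{\eps_n}$ cannot be attained in $\Om$, hence it is attained at some $x_n\in\Gamma$; pass to a subsequence with $x_n\to p_*\in\Gamma$. Flatten $\Gamma$ near $p_*$ and rescale by $\eps_n$, setting $\tilde u_n$ accordingly so that the origin corresponds to $x_n$ on the flattened boundary. By the bounds from (i), $|\tilde u_n|\leq 2$ and $|\nabla\tilde u_n|=\eps_n|\nabla u_{\eps_n}|\leq C_1$, so $\tilde u_n$ is bounded in $C^{0,1}_{\mathrm{loc}}$ on a fixed half-ball; since the rescaled right-hand side is bounded in $L^\infty_{\mathrm{loc}}$ and the rescaled boundary term equals $\eps_n\Upsilon_{\eps_n}\cdot O(1)\to 0$ by the hypothesis $\eps\Upsilon_\eps\to0$, $W^{2,p}$ estimates yield (along a further subsequence) $\tilde u_n\to U$ in $C^1_{\mathrm{loc}}(\overline{\HH})$, $\HH:=\{y_2>0\}$, where $U$ solves $-\Delta U=(1-|U|^2)U$ with $\partial_{y_2}U=0$ on $\{y_2=0\}$, $|U|\leq\sqrt{M_*}$, and $|U(0)|=\sqrt{M_*}$. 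Even reflection across $\{y_2=0\}$ gives a (by elliptic regularity, smooth) solution of the same equation on a full ball about $0$, with $|U|\leq\sqrt{M_*}=|U(0)|$, so $0$ is an \emph{interior} maximum of $|U|^2$; but $\Delta|U|^2(0)\geq 2(M_*-1)M_*>0$, a contradiction. Hence $\limsup_{\eps\to0}\|u_\eps\|_\infty\leq 1$. The main obstacle, in both (i) and (ii), is the near-boundary analysis: carefully combining the $C^2$ flattening, the $\eps$-rescaling of the Robin condition, and elliptic (Schauder / $W^{2,p}$) estimates for oblique-derivative problems, and verifying that in the limit the anchoring term drops out to leave a homogeneous Neumann problem amenable to reflection; the interior computations are routine.
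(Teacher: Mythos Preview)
Your proof is correct and broadly parallel to the paper's, but two steps take genuinely different routes.

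For the $L^\infty$ bound, the paper argues by contradiction: assuming $m_\eps:=\|u_\eps\|_\infty>2$, it bounds $|(u,g)|\le m_\eps$ crudely and shows $\partial_\nu V<0$ at a boundary maximum. Your direct argument is cleaner: by maximizing the scalar quadratic $-t^2+t\cos\alpha$ you get $(w-1)w\le\tfrac14\cos^2\alpha$ at a boundary maximum without any case analysis, and (as you note) without using $\eps\Upsilon_\eps\le C$. This yields the sharper bound $\|u_\eps\|_\infty^2\le\tfrac12(1+\sqrt2)$ for free.

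For the gradient bound the approaches differ more substantially. The paper argues by contradiction: assuming $t_k:=\|\nabla u_{\eps_k}\|_\infty$ satisfies $t_k\eps_k\to\infty$, it rescales by $t_k$ (not $\eps_k$), so the nonlinearity disappears and the limit is a bounded harmonic function on $\R^2$ or $\R^2_+$ with Neumann data; Liouville then gives $\nabla v\equiv0$, contradicting $|\nabla v(0)|=1$. Your argument rescales by $\eps$ and invokes interior and boundary $W^{2,p}$ estimates for the oblique-derivative problem directly, using $\eps\Upsilon_\eps\le C$ to bound the rescaled Robin coefficient. Both are valid; the paper's route is more self-contained (needing only Liouville), while yours is shorter but leans on boundary elliptic regularity theory as a black box.

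For (ii) the two arguments are essentially identical: both blow up at scale $\eps$ and obtain a limit $U$ on the half-space with homogeneous Neumann data and an extremal point on the boundary; the paper concludes via the Hopf Lemma, you via even reflection plus the interior maximum principle---these are equivalent.
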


\begin{proof}
Let $u_\eps$ solve \eqref{EL}, and  suppose by contradiction that $u_\eps$ is not uniformly bounded, and that there is a point $p_\eps\in \Omega$ such that $|u_\eps(p_\eps)|=||u_\eps||_\infty>{ 2}$. Set $V_\eps=|u_\eps|^2$ and (using $u,V$ rather than $u_\eps,V_\eps$,) we obtain
$\nabla V=2 u\cdot\nabla u$ and 
$$\frac12\Delta V =|\nabla u|^2+u\cdot \Delta u= |\nabla u|^2 + \frac{1}{\eps^2}V(V-1).$$

If $p_\eps \in \Omega$, $V$ attains an interior maximum at $p_\eps$ with $V(p_\eps)>1$ so that $\frac12\Delta V (p_\eps) >0$, which is a contradiction. 

If instead $p_\eps \in \partial \Omega$, note that 
$${\frac{\partial V}{\partial \nu}}=2 u\cdot {\frac{\partial u}{\partial \nu}}=
-2\Upsilon_\eps\Big[V(V-1) + [u\cdot g -\cos\alpha](u\cdot g)\Big]$$ 
and hence denoting by $m_\eps:=||u_\eps||_\infty$ and still assuming that it is not uniformly bounded we obtain:
\bal
{\frac{\partial V}{\partial \nu}}&\le -2\Upsilon_\eps\left[m_\eps^2(m_\eps^2-1) - \left|u\cdot g -\cos\alpha||u\cdot g\right|\right]\\
&\le -2\Upsilon_\eps\left[ m_\eps^4 -m_\eps^2- [(m_\eps+|\cos\alpha|)m_\eps]\right]\\
&\le -2\Upsilon_\eps\left[ \frac12 m_\eps^4 -2m_\eps^2\right]\\
&\le -\Upsilon_\eps\left[ m_\eps^4-4m_\eps^2\right] \\
&<0,
\end{align*}
in case $m_\eps>2$.
But if $V$ attains its maximum at $p_\eps \in \partial\Omega$, then 
${\frac{\partial V}{\partial \nu}}(p_\eps)\ge 0$, which is a contradiction.
Therefore $m_\eps=\|u_\eps\|_\infty$ is bounded by $2$.

Next we show that if $\eps\Upsilon_\eps\to 0$ as $\eps\to 0$, then $\limsup_{\eps\to 0} ||u_\eps||_\infty\le 1$. Indeed, assume for a contradiction that (along some subsequence) $m_\eps\to m_0>1$. Note that if $\|u_\eps\|_\infty$ is attained inside $\Omega$, by the above $|u_\eps(x)|\le 1,\forall x \in \bar\Omega$. So suppose that $p_\eps\in\partial\Omega$. Without loss of generality, we rotate the domain such that the normal $\nu(p_\eps) =\vec e_2$. 
Blowing up at scale $\eps$ around the points $p_\eps$, define for $y\in \Omega_\eps:=\eps[\Omega-p_\eps]$ the function
$v_\eps(y)=u_\eps(p_\eps + \eps y)$ and let $\tilde g(y)$ denote the boundary values $g$ for $y\in \partial \Omega_\eps$. As $\eps\to 0$, the set $\Omega_\eps$ becomes the upper half plane $\R^2_+$ and we have:

$$\Delta v_\eps = \eps^2\Delta u_\eps =(|v_\eps|^2-1)v_\eps,$$
$$\frac{\partial v_\eps}{\partial \nu} = \eps \frac{\partial u_\eps}{\partial \nu} 
=-\Upsilon_\eps \eps \Big[(|v_\eps|^2-1) v_\eps + [v_\eps\cdot \tilde g -\cos\alpha]\tilde g\Big]$$

    Note that $|v_\eps|\le m_\eps\le 2$  by the above, so that in $B_R^+(0)$ we have $v_\eps\to v_0$ in $C^2_{loc}$ (along a subsequence.) Therefore, using that $\eps\Upsilon_\eps\to 0$, we obtain
    
\begin{align*}
\Delta v_0&=(|v_0|^2-1)v_0,\\
\frac{\partial v_0}{\partial \nu} &=0, \text{ and }|v_0(0)|=m_0>1.
\end{align*}

As usual, we define $V_0(y):= |v_0(y)|^2$, and we obtain for $V_0$:
$$\frac12\Delta V_0 =|\nabla v_0|^2+v_0\cdot \Delta v_0\ge \frac{1}{\eps^2}V_0(V_0-1)>0,$$
while having a maximum at $y=0$ with $\frac{\partial V_0}{\partial \nu} =0$: this contradicts the Hopf Lemma. We conclude that $\limsup_{\eps\to 0} ||u_\eps||_\infty\le 1$.

To establish the gradient bound, we argue by contradiction:  suppose there exist sequences $\eps_k\to 0$, $x_k\in \overline{\Omega}$ for which
$t_k:=|\nabla u_k(x_k)| = \|\nabla u_k\|_\infty$ satisfies $t_k\eps_k\to\infty$.  Blowing up at scale $t_k$ around the points $x_k$, define
$v_k(x):= u_k\left(x_k + \frac{x}{t_k}\right)$.  By our choice of scaling, $\|v_k\|_\infty <C$, and $v_k$ solves 
$$  -\Delta v_k = \frac{1}{(t_k\eps_k)^2} (|v_k|^2-1)v_k \to 0, $$
uniformly on $\Omega$ (since $\|u_k\|_\infty=\|v_k\|_\infty<C$, by the first part of the lemma.)  If, for some subsequence, $t_k\text{dist}\,(x_k,\partial\Omega)\to \infty$, then the domain $t_k[\Omega-x_k]$ of $v_k$ converges to all $\R^2$, and $v_k\to v$ in $C^k_{loc}$.  Moreover, the limit
 $v$ is a bounded harmonic function on $\R^2$, and hence constant:  $\nabla v(x)\equiv 0$.  However, by construction, $|\nabla v_k(0)|=1$ for all $k$, and hence $|\nabla v(0)|=1$, a contradiction.
 
On the other hand, if $t_k\text{dist}\,(x_k, \partial\Omega)$ is uniformly bounded, then the domains $t_k[\Omega-x_k]$ of $v_k$ converge to a half-space $\R^2_+$, with boundary condition 
$$  \frac{\partial v_k}{\partial\nu}= -\frac{\Upsilon_{\eps_k}}{t_k}\left[v_k(|v_k|^2 -1)-\left[\left(v_k, g\left(x_k + \frac{x}{t_k}\right)\right)-\cos \alpha\right]g\left(x_k + \frac{x}{t_k}\right)\right] \to 0,  $$
since $\frac{\Upsilon_{\eps_k}}{t_k}\to 0$ and $v_k$ is uniformly bounded by the a priori bound on $u_\eps$ proven above.
That is, $v_k\to v$ which is  bounded and harmonic in $\R^2_+$, and with a Neumann condition $\partial_\nu v=0$ on the boundary.  By the reflection principle and Liouville's theorem we again conclude that $v$ is constant, which leads to the same contradiction as in the previous case.  Thus, the desired gradient bound must hold. 
 \end{proof}

\section{Isolating the defects}

We begin by proving an $\eta$-compactness (also called $\eta$-ellipticity) result (see \cite{Struwe}, \cite{Riviere}). We then define vortex balls, of radius of order $\eps$ in the interior and of radius of order $\eps^s$ on the boundary of the domain, and following Struwe (\cite{Struwe}) we show that they form a uniformly bounded family.

\subsection*{$\eta$-compactness}

 Basically, if the energy contained in a ball of radius $\eps^\beta$ is too small, there can be no vortex in a slightly smaller ball, $B_{\eps^\gamma}(x_0)$.  To this end, we recall that $\Upsilon=\Upsilon(\eps)=\eps^{-s}$ for $s\in (0,1]$,  and fix $\beta,\gamma$ such that $\frac34 s\le\beta<\gamma<s$. We also let $a\in (0,\frac12)$ to be chosen later.

\begin{proposition}[$\eta$-compactness]\label{eta}
There exist constants $\eta, C, \eps_0>0$ such that for any solution $u_\eps$ of \eqref{EL} with $\eps\in (0,\eps_0)$, if $x_0\in\overline{\Omega}$,  $a\in (0,\frac12)$ and
\be\label{eta0}
E_\eps\left(u_\eps;B_{\eps^\beta}(x_0)\right) 
   \le \eta\, |\ln\eps|,
\ee
then
\begin{gather}
\label{eta1}  |u_\eps|^2\ge 1-\sqrt2 a\quad\text{ in } \quad B_{\eps^\gamma}(x_0),  \\
\label{eta2}  W(u,g):=  {\frac12}(|u|^2-1)^2 + [(u,g)-\cos\alpha]^2 \le a^2\quad\text{ on $\Gamma\cap B_{\eps^\gamma}(x_0)$,} \\
\label{eta3}  
   \frac{1}{4\eps^2}\int_{B_{\eps^\gamma}(x_0)} \left(|u_\eps|^2-1\right)^2\, dx + 
      \frac{\Upsilon}{2}\int_{\Gamma\cap B_{\eps^\gamma}(x_0)} W(u_\eps, g) \, ds \le C\eta.
\end{gather}
\end{proposition}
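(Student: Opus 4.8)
\emph{Strategy and a good radius.} The plan is to prove the energy estimate \eqref{eta3} first, by a slicing argument together with a Pohozaev identity, and then to deduce the pointwise bounds \eqref{eta1} and \eqref{eta2} from it using the a priori estimates of Lemma~\ref{apriori}. Write $\Omega_\rho=B_\rho(x_0)\cap\Omega$, $e_\eps(u)=\tfrac12|\nabla u|^2+\tfrac1{4\eps^2}(|u|^2-1)^2$, and split $E_\eps(u_\eps;B_\rho(x_0))$ into its bulk and boundary-anchoring parts. Since $\int_{\eps^\gamma}^{\eps^\beta}\tfrac{d\rho}{\rho}=(\gamma-\beta)|\ln\eps|$, a mean-value (pigeonhole) argument applied to $\tfrac{d}{d\rho}E_\eps(u_\eps;B_\rho(x_0))$ produces a radius $r\in(\eps^\gamma,\eps^\beta)$ such that $r\int_{\partial B_r(x_0)\cap\Omega}e_\eps(u_\eps)\,ds\le C_2\eta$ and, at the points $y_\pm$ where $\partial B_r(x_0)$ crosses $\Gamma$, $r\,\Upsilon\,W(u_\eps(y_\pm),g(y_\pm))\le C_2\eta$ as well, with $C_2$ depending only on $\gamma-\beta$. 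From the first bound, Cauchy--Schwarz gives that $u_\eps$ has oscillation $\le C\sqrt\eta$ on $\partial B_r\cap\Omega$; since the potential term additionally forces $(|u_\eps|^2-1)^2\le C\eta\,\eps^{2-2\gamma}$ somewhere on $\partial B_r\cap\Omega$, we obtain $\big||u_\eps|-1\big|\le C\sqrt\eta$ on all of $\partial B_r\cap\Omega$. From the second bound, $W(u_\eps(y_\pm),g(y_\pm))\le C\eta\,\eps^{s-\gamma}=o(1)$, since $\gamma<s$.

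\emph{Pohozaev identity and \eqref{eta3}.} Multiply the bulk equation in \eqref{EL} by $X\cdot\nabla u_\eps$ and integrate over $\Omega_r$. When $B_{\eps^\beta}(x_0)\subset\Omega$, take $X=x-x_0$; the classical Pohozaev identity gives directly $\tfrac1{2\eps^2}\int_{B_r}(|u_\eps|^2-1)^2=\tfrac r2\int_{\partial B_r}\bigl(|\partial_\tau u_\eps|^2-|\partial_\nu u_\eps|^2\bigr)ds+\tfrac r{4\eps^2}\int_{\partial B_r}(|u_\eps|^2-1)^2\,ds\le r\int_{\partial B_r}e_\eps\le C_2\eta$. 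When $B_{\eps^\beta}(x_0)$ meets $\Gamma$, instead take $X$ Lipschitz, equal to $x-x_0$ to first order near $x_0$ but bent so that $X\cdot\nu\equiv0$ on $\Gamma$; then $\div X=2+O(r)$, $\nabla X=\mathrm{Id}+O(r)$ on $B_r$, and the boundary flux of $X$ through $\Gamma$ vanishes. The only genuinely new contribution is $\int_{\Gamma\cap B_r}(X\cdot\tau)(\partial_\nu u_\eps\cdot\partial_\tau u_\eps)\,ds$, and the Robin condition in \eqref{EL}, the identity $\partial_\nu u_\eps\cdot\partial_\tau u_\eps=-\Upsilon\bigl(\tfrac12\partial_\tau W(u_\eps,g)-[(u_\eps,g)-\cos\alpha](u_\eps,\partial_\tau g)\bigr)$ on $\Gamma$, and an integration by parts along $\Gamma$ turn it into: the term $+c_0\,\Upsilon\int_{\Gamma\cap B_r}W(u_\eps,g)$ moved to the left-hand side (since $\partial_\tau(X\cdot\tau)\ge c_0>0$ near $x_0$); an endpoint contribution $\le\Upsilon\,r\,W(u_\eps(y_\pm),g(y_\pm))\le C\eta$ by the previous step; a term $\lesssim\Upsilon\int_{\Gamma\cap B_r}|X\cdot\tau|\sqrt W$, which Young's inequality absorbs into $\tfrac{c_0}2\Upsilon\int_{\Gamma\cap B_r}W$ plus $C\Upsilon r^{3}\le C\eps^{3\beta-s}$, an $o(1)$ term precisely because $\beta\ge\tfrac34 s$; and curvature errors $\lesssim r\int_{\Omega_r}|\nabla u_\eps|^2+Cr\,\tfrac1{\eps^2}\int_{\Omega_r}(|u_\eps|^2-1)^2$, whose first part is $\le C\eps^\beta|\ln\eps|\,\eta\le\eta$ for $\eps$ small and whose second part is absorbed on the left. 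The result is $\tfrac1{\eps^2}\int_{\Omega_r}(|u_\eps|^2-1)^2+\Upsilon\int_{\Gamma\cap B_r}W(u_\eps,g)\le C\eta$, which is \eqref{eta3} because $B_{\eps^\gamma}(x_0)\subset B_r(x_0)$.

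\emph{The pointwise bounds.} For \eqref{eta1}: if $|u_\eps(x_1)|^2<1-\sqrt2\,a$ at some $x_1\in B_{\eps^\gamma}(x_0)\cap\overline\Omega$, then $|\nabla u_\eps|\le C_1/\eps$ and $|u_\eps|\le2$ (Lemma~\ref{apriori}) force $(|u_\eps|^2-1)^2\ge a^2/2$ on $B_{c_1 a\eps}(x_1)\cap\Omega$, of area $\gtrsim(a\eps)^2$ by the interior-ball condition, so $\tfrac1{\eps^2}\int_{\Omega_r}(|u_\eps|^2-1)^2\gtrsim a^4$, contradicting \eqref{eta3} once $\eta=\eta(a)$ is chosen small; the reverse inequality likewise gives $|u_\eps|^2\le1+\sqrt2\,a$, which controls the $\tfrac12(|u_\eps|^2-1)^2$ part of $W$. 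For \eqref{eta2}: extend $g$ off $\Gamma$ with $\partial_\nu g\equiv0$ on $\Gamma$ and mollify at a scale $\delta$ small enough that $\Upsilon\|g-g_\delta\|_\infty$ and $r^{2}\|D^2g_\delta\|_\infty$ are $o(1)$; then $w:=W(u_\eps,g_\delta)$ satisfies $\Delta w\ge-o(1)$ in $\Omega_r$ and, on $\Gamma\cap B_r$, $\partial_\nu w=-2\Upsilon\bigl(P^2|u_\eps|^2+2PQ\,(u_\eps,g)+Q^2\bigr)+o(1)\le o(1)$, with $P=|u_\eps|^2-1$ and $Q=(u_\eps,g)-\cos\alpha$, since the symmetric matrix $\bigl(\begin{smallmatrix}|u_\eps|^2&(u_\eps,g)\\(u_\eps,g)&1\end{smallmatrix}\bigr)$ is positive semidefinite by Cauchy--Schwarz ($|g|=1$). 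The oscillation control and the endpoint estimate give $w\le C\eta$ on $\partial B_r\cap\Omega$, so the maximum principle, with the Hopf lemma along $\Gamma\cap B_r$ to handle the sign of $\partial_\nu w$, gives $w\le C\eta+o(1)$ throughout $\Omega_r$, hence on $\Gamma\cap B_{\eps^\gamma}$; undoing the mollification and taking $\eta$ small yields \eqref{eta2}.

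\emph{Main obstacle.} The heart of the matter is the boundary analysis: designing $X$ so that every curvature/Robin error in the Pohozaev identity is either absorbed on the left-hand side or is a strictly positive power of $\eps$---which is exactly what pins down the admissible range $\tfrac34 s\le\beta<\gamma<s$---and, because $g$ is only $C^1$, running the maximum-principle argument for \eqref{eta2} through a mollification whose scale keeps both $\Upsilon\|g-g_\delta\|_\infty$ and $(\mathrm{diam}\,\Omega_r)^2\|D^2g_\delta\|_\infty$ negligible. The interior part is the standard $\eta$-ellipticity scheme of Struwe and Rivi\`ere.
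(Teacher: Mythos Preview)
Your treatment of \eqref{eta3} and \eqref{eta1} is essentially the paper's: a good radius $r\in(\eps^\gamma,\eps^\beta)$ via slicing, a Pohozaev identity with a vector field $X$ tangent to $\Gamma$ and equal to $x-x_0$ to first order, and then the gradient bound $|\nabla u_\eps|\le C/\eps$ for the pointwise estimate on $|u_\eps|$. The handling of the $\Gamma$-contribution by integrating $\partial_\tau W$ by parts along $\Gamma$ is exactly what the paper does in its auxiliary Lemma~\ref{poho}. (Your remark that $\beta\ge\tfrac34 s$ is ``precisely'' what makes $\Upsilon r^3$ negligible is a slight overstatement: that bound only needs $3\beta>s$; the paper's cruder $\Upsilon r^2$ error needs $2\beta>s$, and $\beta\ge\tfrac34 s$ is simply a convenient choice covering all such constraints.)

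The genuine gap is in your maximum-principle argument for \eqref{eta2}. The claim that $w=W(u_\eps,g_\delta)$ satisfies $\Delta w\ge -o(1)$ in $\Omega_r$ is not correct. Expanding, $\Delta w$ contains the term $P\,\Delta P=2P|\nabla u_\eps|^2+\tfrac{2}{\eps^2}P^2|u_\eps|^2$ with $P=|u_\eps|^2-1$; even after \eqref{eta1} gives $|P|\le\sqrt2\,a$, the piece $2P|\nabla u_\eps|^2$ can be as negative as $-Ca/\eps^2$, since $|\nabla u_\eps|$ is only known to be $O(1/\eps)$. The $\tfrac{1}{\eps^2}$ cross-term $\tfrac{2}{\eps^2}PQ\,(u_\eps,g_\delta)$ arising from $2Q\,\Delta Q$ has the same defect. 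No choice of mollification scale $\delta$ repairs this: the divergent contributions come from the equation itself, not from $g$. Hence the subsolution inequality fails and the Hopf/maximum-principle step does not go through.

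The paper obtains \eqref{eta2} by a second, different Pohozaev identity: with $\psi=x-x_1$ (for $x_1$ a star-center of $\omega_{r_\eps}$) one gets
\[
\int_{\partial\omega_{r_\eps}} |\partial_\tau u_\eps|^2\,ds
\ \lesssim\ \int_{\partial\omega_{r_\eps}}|\partial_\nu u_\eps|^2\,ds
\ +\ \frac{1}{r_\eps\eps^2}\int_{\omega_{r_\eps}}(1-|u_\eps|^2)^2\,dx\ +\ \frac{F(r_\eps)}{r_\eps},
\]
and the right-hand side is bounded using the already-proven \eqref{eta3} (in particular $|\partial_\nu u_\eps|^2\le C\Upsilon^2 W$ on $\Gamma$), yielding $\int_{\Gamma_{r_\eps}}|\partial_\tau u_\eps|^2\le C\eps^{-s}$. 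The one-dimensional Sobolev embedding on $\Gamma_{r_\eps}$ then gives $|u_\eps(x)-u_\eps(y)|\le C\eps^{-s/2}|x-y|^{1/2}$, and the contradiction argument (if $W>a^2$ at one point, then $W>a^2/2$ on an arc of length $\sim\eps^s$, forcing $\Upsilon\int_{\Gamma} W\gtrsim a^2$) finishes. This route stays entirely within integral estimates and never needs a pointwise differential inequality for $W$.
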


We note that in case $\Gamma\cap B_{\eps^\beta}(x_0)=\emptyset$, this has been proven in Lemma~2.3 of \cite{Struwe}, and hence it suffices to consider $x_0\in \Gamma\subset\partial\Omega$ when proving Proposition~\ref{eta}.

Define $\Gamma_r(x_0)= \partial\Omega\cap B_r(x_0)$, and following Struwe \cite{Struwe}, define
\begin{multline}\label{Fdef}
F(r)=F(r; x_0, u,\eps)=\\ r\left[
  \int_{\partial B_r(x_0)\cap\Omega} \frac12\left\{ |\nabla u|^2
      + \frac{1}{2\eps^2}(|u|^2-1)^2\right\} ds  
       + \frac{\Upsilon(\eps)}{2}\sum_{x\in\partial \Gamma_r(x_0)}  W(u,g)
  \right].
\end{multline}
Note that if $\partial\Gamma_r(x_0)\neq\emptyset$, then for $r>0$ sufficiently small it consists of two points.

The proof of Proposition~\ref{eta} relies on the following estimate.  For any $x_0\in\overline{\Omega}$ and $R>0$, we define (as in the proof of Lemma~\ref{upperbound}) 
\be\label{omega} 
{\displaystyle\qquad \qquad\omega_R(x_0)=B_R(x_0)\cap\Omega . }
\ee
 Then, we first prove:
\begin{lemma}\label{poho}
There exist $C>0$ and $r_0>0$ such that for $\eps \in(0,1)$, $x_0\in\Gamma$, and $r \in (0,r_0)$, we have that
\begin{equation*}
	\frac{1}{2\eps^2} \int_{\omega_r(x_0)} \big( |u_\eps|^2-1\big)^2 \, dx \ + \, \Upsilon \int_{\Gamma_r(x_0)} W(u,g) \,ds 
	\leq C  \left\{
		r\int_{\omega_r(x_0)} |\grad u_\eps|^2 \, dx 
		+ F(r) + r^2\Upsilon
	\right\}.
\end{equation*}
where $F(r)$ is as in \ref{Fdef}.

\end{lemma}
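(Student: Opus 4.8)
The plan is to prove the estimate by a Pohozaev-type argument localized on the half-disk $\omega_r(x_0)$, testing the Euler--Lagrange equation \eqref{EL} against the radial vector field $(x-x_0)\cdot\nabla u_\eps$. First I would multiply the bulk equation $-\Delta u + \frac{1}{\eps^2}(|u|^2-1)u = 0$ by $(x-x_0)\cdot\nabla u$ and integrate over $\omega_r(x_0)$. Standard manipulation of the $\Delta u$ term produces, via the divergence theorem, a bulk term $\int_{\omega_r}\left(\frac{n-2}{2}\right)|\nabla u|^2\,dx = 0$ in dimension $n=2$ (this is the reason the argument works cleanly in 2D), plus boundary integrals over $\partial\omega_r(x_0)$, which splits into the circular arc $\partial B_r(x_0)\cap\Omega$ and the boundary piece $\Gamma_r(x_0)$. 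On the arc, all the resulting terms are controlled by $r\int_{\partial B_r\cap\Omega}\{|\nabla u|^2 + \eps^{-2}(|u|^2-1)^2\}\,ds$, which is exactly (a piece of) $F(r)$ as defined in \eqref{Fdef}. The potential term $\frac{1}{\eps^2}(|u|^2-1)u\cdot(x-x_0)\cdot\nabla u = \frac{1}{4\eps^2}(x-x_0)\cdot\nabla\big((|u|^2-1)^2\big)$ integrates by parts to give the desired $\frac{1}{\eps^2}\int_{\omega_r}(|u|^2-1)^2\,dx$ on the left, again up to boundary terms of the same type.

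**Handling the boundary piece $\Gamma_r(x_0)$.** The main new feature — and where the weak-anchoring term enters — is the contribution along $\Gamma_r(x_0)$. Here I would use the Neumann-type boundary condition from \eqref{EL}, namely $\partial_\nu u = -\Upsilon_\eps\big((|u|^2-1)u + [(u,g)-\cos\alpha]g\big)$, to rewrite the normal-derivative terms arising in the Pohozaev identity on $\Gamma_r(x_0)$. Decomposing $(x-x_0)\cdot\nabla u$ on $\Gamma$ into its tangential and normal components, the normal component pairs with $\partial_\nu u$ and thus with $\Upsilon_\eps W$-type quantities, while the tangential component pairs with $\partial_\tau u$, which one integrates by parts along the curve $\Gamma_r(x_0)$ (a one-dimensional integration by parts). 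The key point is that $(x-x_0)\cdot\nu = O(r^2)$ and $(x-x_0)\cdot\tau = O(r)$ near $x_0$ because $\Gamma$ is $C^2$ (the curve is tangent to its tangent line to second order), so the normal-pairing terms come with a factor $r^2$, producing the $r^2\Upsilon$ term, while the tangential-pairing terms, after the $1$D integration by parts, produce a full (favorably-signed, up to lower-order corrections) multiple of $\Upsilon\int_{\Gamma_r}W(u,g)\,ds$ on the left, plus endpoint contributions at $\partial\Gamma_r(x_0)$ which are absorbed into $F(r)$. Care is needed with the derivative of $g$ along $\Gamma$ and with the fact that $W$ depends on $g$; these generate terms controlled by $\Upsilon\int_{\Gamma_r}|u|^2\,ds \lesssim r\Upsilon$ (using $\|u\|_\infty\le 2$ from Lemma~\ref{apriori}), which is $\le r^2\Upsilon$ only after noting $r<r_0<1$, or more safely absorbed after a small-$\eta$ / smallness-of-$r$ argument.

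**Main obstacle.** I expect the principal difficulty to be the bookkeeping of the boundary terms on $\Gamma_r(x_0)$: isolating the favorably-signed multiple of $\Upsilon\int_{\Gamma_r} W(u,g)$ cleanly, while verifying that every error term is either $O(r\int_{\omega_r}|\nabla u|^2)$, $O(F(r))$, or $O(r^2\Upsilon)$. In particular one must control cross terms of the form $\Upsilon\int_{\Gamma_r}[(u,g)-\cos\alpha](u,\partial_\tau g)\,(x-x_0)\cdot\tau\,ds$ and terms involving $\partial_\tau\gamma$ coming from differentiating $g=e^{i\gamma}$; Young's inequality lets one split these into a small multiple of $\Upsilon\int_{\Gamma_r}[(u,g)-\cos\alpha]^2$ (absorbed on the left) and a term $\lesssim r^2\Upsilon\|u\|_\infty^2$. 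A secondary technical point is justifying the Pohozaev identity itself: $u_\eps$ is a classical (indeed smooth up to $\Gamma$, since $\Gamma$ is $C^2$) solution by elliptic regularity, so all integrations by parts are legitimate; one should also take $r_0$ small enough that $\omega_r(x_0)$ is the nice ``polar rectangle''/near-half-disk from the proof of Lemma~\ref{upperbound}, so that the geometry of $\nu$ and $\tau$ along $\Gamma_r(x_0)$ is uniformly controlled. Once these estimates are assembled, rearranging yields the stated inequality with a constant $C$ depending only on $\Omega$ (through the $C^2$ bound on $\Gamma$).
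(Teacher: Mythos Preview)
Your overall Pohozaev strategy is exactly right, and the tangential integration-by-parts on $\Gamma_r$ that produces the favorably-signed $\Upsilon\int_{\Gamma_r}W(u,g)\,ds$ plus endpoint terms going into $F(r)$ is precisely what the paper does. The genuine gap is in your choice of multiplier $\psi=x-x_0$ and the claim that the ``normal-pairing terms come with a factor $r^2$, producing the $r^2\Upsilon$ term.'' The Pohozaev identity yields \emph{three} boundary integrands on $\Gamma_r$ carrying the factor $(\psi\cdot\nu)$: the piece $-\bigl[(x-x_0)\cdot\nu\bigr]\,|\partial_\nu u|^2$ from $-(\partial_\nu u,\psi\cdot\nabla u)$, the term $\tfrac12|\nabla u|^2\,(\psi\cdot\nu)$, and the term $\tfrac{1}{4\eps^2}(|u|^2-1)^2\,(\psi\cdot\nu)$. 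You only discuss the first, and you miscount its size: since $\partial_\nu u=-\Upsilon[\,\cdots\,]$ on $\Gamma$, one has $|\partial_\nu u|^2\le C\,\Upsilon^{\,2}\,W(u,g)$, so this term is of order $r^2\Upsilon^{\,2}\int_{\Gamma_r}W$, i.e.\ $(r^2\Upsilon)$ times the quantity you want on the \emph{left}, and it can be absorbed only if $r^2\Upsilon\ll 1$, which fails for $r$ in a fixed interval $(0,r_0)$. The other two are worse: $\tfrac12|\partial_\tau u|^2\,(x-x_0)\cdot\nu$ involves $\int_{\Gamma_r}|\partial_\tau u|^2\,ds$, for which there is no bound in terms of the right-hand side, and $\tfrac{1}{4\eps^2}(|u|^2-1)^2\,(x-x_0)\cdot\nu$ carries an $\eps^{-2}$ that the $r^2$ factor cannot compensate.

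The paper's remedy is to replace $x-x_0$ by a vector field $X$ that agrees with $x-x_0$ to second order (so that $\mathrm{div}\,X\ge 1$ and $DX\approx\mathrm{Id}$ on $\omega_r$, preserving the bulk estimates) but is constructed to satisfy $X\cdot\nu\equiv 0$ \emph{exactly} on $\Gamma$. With this choice, all three problematic $\Gamma_r$-terms vanish identically, and only the tangential piece $-(X\cdot\tau)\,(\partial_\nu u,\partial_\tau u)$ survives on $\Gamma_r$; your one-dimensional integration-by-parts then goes through verbatim and yields the stated bound for all $r\in(0,r_0)$. In short: the observation $(x-x_0)\cdot\nu=O(r^2)$ is correct but insufficient, because that $O(r^2)$ is multiplied by quantities ($\Upsilon^2 W$, $|\partial_\tau u|^2$, $\eps^{-2}$) that are not uniformly bounded; you need $\psi\cdot\nu=0$ on $\Gamma$ exactly.
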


\begin{proof}[ Proof of Lemma~\ref{poho}]
We denote $u=u_\eps$, $\omega_r=\omega_r(x_0)$, and $\Gamma_r=\Gamma_r(x_0)$ for convenience, as $x_0\in\Gamma$ and $\eps>0$ are fixed.

Let $\psi \in C^\infty(\Omega;\R^2)$ be a vector field, to be determined later.  Taking the complex scalar product of the equation \eqref{EL} with $\psi\cdot\nabla u$ and integrating over $\omega_r$, we obtain the Pohozaev-type equality,
\begin{multline}\label{pohoz}
\int_{\partial\omega_r} \left\{
-(\partial_\nu u, \psi\cdot\nabla u) + \frac12 |\nabla u|^2 (\psi\cdot \nu)
  +\frac{1}{4\eps^2} (|u|^2-1)^2(\psi\cdot \nu)
\right\} ds  \\
= \int_{\omega_r} \left\{ \frac{1}{4\eps^2} (|u|^2-1)^2 \div\psi 
             + \frac12 |\nabla u|^2\div \psi - \sum_{i,j}\partial_i\psi_j (\partial_iu,\partial_ju)
             \right\}dx.
\end{multline}

We choose $r_0>0$ sufficiently small so that $\Gamma\cap B_r(x_0)$ consists of a single smooth arc, and $\omega_r$ is strictly starshaped with respect to some $x_1\in\omega_r$, for all $0<r\le r_0$.  

Let $\mathcal{N}$ be a $2r_0$-neighborhood of $\Gamma$.  We claim that, by taking $r_0$ smaller if necessary, there exists a vector field $X\in C^2(\mathcal{N};\R^2)$ with the following properties (see \cite{Ku}, \cite{Moser}):
\begin{gather}
X\cdot\nu = 0, \quad\text{for all $x\in \Gamma_r$}, \label{X1} \\
|X-(x-x_0)|\le C |x-x_0|^2, \quad |DX - Id| \le C |x-x_0|,\quad
\text{for all $x\in \omega_r$}, \label {X2}
\end{gather}
for a constant $C>0$, for any $x_0\in\Gamma$.
The existence of such a vector field in a disk $B_r(x_0)$ follows from the smoothness of $\Gamma$; to obtain the uniform global estimates \eqref{X1}, \eqref{X2} we use the compactness of $\Gamma$ and a partition of unity.
In particular, note that $X= (X\cdot\tau)\tau\simeq (x-x_0)\tau$ lies along the tangent vector on $\Gamma_r$.  

We now take $\psi=X$ in \eqref{pohoz} and estimate each term in \eqref{pohoz}, separating the $\partial\omega_r$ terms into the pieces along $\Gamma_r$ and along $\partial B_r(x_0)\cap\Omega$.  First,
on $\Gamma_r$ we have $X\cdot\nu=0$, and the only contribution to the left hand side of \eqref{pohoz} is:
\begin{align*} \nnn
-\int_{\Gamma_r} (\partial_\nu u, X\cdot\nabla u)\, ds
  &= \Upsilon \int_{\Gamma_r} \left\{(|u|^2-1)(u,X\cdot\nabla u)+[(u\cdot g)-\cos \alpha]\,  \, (g,X\cdot\nabla u) \right\}\, ds \\
  &= \Upsilon \int_{\Gamma_r} 
     \left\{(|u|^2-1)\left(u,  \frac{\partial u}{\partial \tau}\right)+\left[ (u\cdot g)-\cos \alpha\right] \, \, \left(g, \frac{\partial u}{\partial \tau}\right)\right\} (X\cdot\tau)\, ds \\
   &=  \Upsilon \int_{\Gamma_r}\left[ \partial_\tau \left(\frac12[(u,g)-\cos\alpha]^2\right) -[(u,g)-\cos\alpha](\partial_\tau g, u)\right.\\
   &\left.\qquad\qquad+\partial_\tau(\frac14(|u|^2-1)^2))
   \right]\, X\cdot \tau\, ds\\
&=I_1-I_2 + I_3
\end{align*}
To estimate $I_1$, we  use integration by part and \eqref{X2} as follows:
\begin{align}
I_1&=\Upsilon \int_{\Gamma_r} \frac{\partial}{\partial \tau}\left\{ \frac12 [u\cdot g-\cos\alpha]^2\right\} X\cdot \tau\, ds\\
&=\frac{\Upsilon}{2}\left\{ [u\cdot g-\cos\alpha]^2 (X\cdot \tau)\Bigg|_{\partial \Gamma_r}-\int_{\Gamma_r} \left\{  [(u\cdot g)-\cos\alpha]^2\right\} \partial_\tau (X\cdot \tau)\, ds\right\}\\
&=\frac{\Upsilon}{2}\left\{ r \sum_{\partial \Gamma_r} [(u\cdot g)-\cos\alpha]^2 -\int_{\Gamma_r} \left\{  [(u\cdot g)-\cos\alpha]^2\right\}\, dx \right\} + O(\Upsilon r^2),
\label{p1}
\end{align}
using \eqref{X2} in the last line. Indeed, on the endpoints of $\Gamma_r$, $|X\cdot\tau \mp r|\le Cr^2$ and on $\Gamma_r$ itself, $\partial_\tau(X\cdot\tau)= 1 + O(|x-x_0|)$.  

For $I_2$, we have the rough estimate:

$$|I_2|\le \Upsilon |\Gamma_r|\left( (\|u\|_\infty + 1)^2\left|\left|\frac{\partial g}{\partial \tau}\right|\right|_\infty \|X\cdot \tau\|_\infty\right)\le C\Upsilon r^2,$$
using again \eqref{X2}.
Finally, $I_3$ is estimated in the same way as $I_1$:

\begin{align}
I_3&=\Upsilon \int_{\Gamma_r}\frac14\partial_\tau((|u|^2-1)^2)
   \, X\cdot \tau\, ds\\
&=\Upsilon\left\{\frac14(|u|^2-1)^2(X\cdot\tau)\Bigg|_{\partial \Gamma_r}-\frac14\int_{\Gamma_r} \left(|u|^2-1\right)^2 \partial_\tau (X\cdot \tau)\, ds\right\}\\
&=\frac{\Upsilon}{2}\left\{ r\sum_{\partial\Gamma_r}\frac12\left(|u|^2-1\right)^2 -
\frac12\int_{\Gamma_r} \left(|u|^2-1\right)^2\, ds\right\} + O(\Upsilon r^2)
\end{align}

The remaining terms on the left-hand side of \eqref{pohoz}  may also be estimated in a simple way, using $|X\cdot\nu|, |X\cdot\tau|\le Cr$ and \eqref{X1}:

\be\label{p2}
\left| \int_{\partial\omega_r\cap\Omega}\left\{-(\partial_\nu u, X\cdot \nabla u ) + \frac12 |\nabla u|^2 X\cdot \nu \right\}\, ds \right|\le C r \int_{\partial\omega_r\cap\Omega} |\nabla u|^2\, ds
\ee

\begin{align}
\left|\frac{1}{4\eps^2}\int_{\partial\omega_r} (|u|^2-1)^2\left(X\cdot\nu\right) \, ds\right|
 &= \left|\frac{1}{4\eps^2}\int_{\partial B_r\cap\Omega} 
       (|u|^2-1)^2\left(X\cdot\nu\right) \, ds\right|\\
     &  \le \frac{Cr}{\eps^2}\int_{\partial B_r\cap\Omega} 
       (|u|^2-1)^2\, ds.\label{p4}
\end{align}
For the terms on the right side of \eqref{pohoz}, we use \eqref{X2}:  $|\partial_i X_j - \delta_{ij}|\le Cr$, and for $r_0$ chosen smaller if necessary, we may assume $\div X\ge 2-Cr>1$ in $\omega_r$.  Thus, the right side of \eqref{pohoz} may be estimated as:
\begin{multline}\label{p6}
\int_{\omega_r} \left\{ \frac{1}{4\eps^2} (|u|^2-1)^2 \div X 
             + \frac12 |\nabla u|^2\div X- \sum_{i,j}\partial_i X_j (\partial_iu,\partial_ju)
             \right\}dx \\
\ge 
  \int_{\omega_r} \left\{ \frac{1}{4\eps^2} (|u|^2-1)^2
     - C r |\nabla u|^2 \right\} dx.
\end{multline}
Putting the above estimates together, we arrive at the desired bound.
\end{proof}

We are ready for the

\begin{proof}[ Proof of Proposition~\ref{eta}]
We follow \cite{Struwe}, \cite{Moser}.  If $x_0\in\Omega\setminus\Gamma$, this is proven in \cite{Struwe}, so we restrict our attention to $x_0\in\Gamma$.

Recalling the definition of $F$ \eqref{Fdef}, we note that since
\be\label{Fest}  \eta\ln\frac{1}{\eps}\ge E_\eps(u_\eps; \omega_{\eps^\beta}\setminus\omega_{\eps^\gamma}) = \int_{\eps^\gamma}^{\eps^\beta}
 \frac{F(r)}{r} \, dr,
\ee
there exists $r_\eps\in (\eps^\gamma,\eps^\beta)$ so that
$$  F(r_\eps) \le \frac{\eta}{\gamma-\beta}.  $$
By Lemma~\ref{poho} with $\Gamma_{r_\eps}:=\partial \omega_{r_\eps}\cap \Gamma$,   we deduce :
\begin{align}\nnn
\frac{1}{2\eps^2} \int_{\omega_{r_\eps}(x_0)} (|u|^2-1)^2\, dx &+ \Upsilon_\eps
\int_{\Gamma_{r_\eps}(x_0)} W(u_\eps, g)\, ds\\ \nnn
&\le C\left\{ r_\eps \int_{\omega_{r_\eps}(x_0)}  |\nabla u_\eps|^2\, dx + F(r_\eps) + r_\eps^2\Upsilon_\eps\right\}\\ \nnn
&\le C \left\{ \eps^\beta \eta |\ln \eps| + \frac{\eta}{\gamma-\beta}+\eps^{2\beta} \eps^{-s}\right\}\\ \label{XXX}
&\le C'\eta,
\end{align}
which proves \eqref{eta3}  since $r_\eps>\eps^\gamma$. Note that no conditions are required on $\eta$ at this point, and this will prove useful later on (see Corollary\ref{easycor}.)

To prove \eqref{eta1}, assume (for contradiction) that 
there is a point $x_2\in B_{\eps^\gamma}(x_0)$ with $|u_\eps(x_2)|<1-a$.  
By Lemma~\ref{apriori}, $|\nabla u_\eps|\le C_1/\eps$, and hence there is a constant $C>0$ such that $|u(x)|<1-\frac{a}2$, $\forall x\in B_{C\eps}(x_2)\subset B_{\eps^\gamma}(x_0)$.
In that case,
\begin{align*}
\frac1{4\eps^2}\int_{B_{\eps^\gamma}(x_0)}(|u|^2-1)^2\, dx&\ge 
\frac1{4\eps^2}\int_{B_{C\eps}(x_0)}(|u|^2-1)^2\, dx\\
&\ge C a^2.
\end{align*}

We then choose $\eta>0$ small enough  so this contradicts \eqref{eta3} and hence \eqref{eta1} holds for all such $\eta$ (which is independent of $x_0$).


To verify \eqref{eta2}, we return to the Pohozaev identity \eqref{pohoz}.
We recall that for $r=r_\eps$ (as in the proof of \eqref{eta3}) sufficiently small, the smoothness and compactness of $\Gamma$ ensure that $\omega_{r_\eps}=B_{r_\eps}(x_0)\cap \Omega$ is strictly starshaped around some $x_1\in\omega_{r_\eps}$, and we have $(x-x_1)\cdot\nu\ge r_\eps/4$ on $\partial\omega_{r_\eps}$.
Taking $\psi= x-x_1$ in  \eqref{pohoz}, we obtain:
\begin{align}\nnn
&\int_{\partial\omega_{r_\eps}} \left\{
(x-x_1)\cdot\nu\left[ |\partial_\tau u_\eps|^2 - |\partial_\nu u_\eps|^2\right] + (x-x_1)\cdot(\nu-\tau)\, (\partial_\nu u_\eps , \partial_\tau u_\eps)\right\} ds   \\
\label{p7}
&\qquad\le
\frac{1}{2\eps^2}\int_{\omega_{r_\eps}} (1-|u_\eps|^2)^2 dx.
\end{align}
To estimate the second term in the left hand side of this inequality we use Cauchy-Schwartz,
$$  \left| \int_{\partial\omega_{r_\eps}}  (x-x_1)\cdot(\nu-\tau)\, (\partial_\nu u_\eps , \partial_\tau u_\eps)
\right|
\le  2r_\eps \int_{\partial\omega_{r_\eps}} \left\{ \frac{1}{32} |\partial_\tau u_\eps|^2
          + 8 |\partial_\nu u_\eps|^2 \right\} ds,
$$
and hence
$$
\frac{r_\eps}{16}\int_{\partial\omega_{r_\eps}} |\partial_\tau u_\eps|^2 \, ds
\le C r_\eps \int_{\partial\omega_{r_\eps}} |\partial_\nu u_\eps|^2 \, ds + 
\frac{1}{2\eps^2}\int_{\omega_{r_\eps}} (1-|u_\eps|^2)^2 dx
$$
which yields:
\begin{align*}  \int_{\partial\omega_{r_\eps}} |\partial_\tau u_\eps|^2 \, ds
&\le C'\int_{\partial\omega_{r_\eps}} |\partial_\nu u_\eps|^2\, ds + 
\frac{8}{r_\eps\eps^2}\int_{\omega_{r_\eps}} (1-|u_\eps|^2)^2 \, dx\\
  &\le C'\Upsilon^2\int_{\Gamma_{r_\eps}} \Big|[(u_\eps,g)-\cos\alpha]g + (|u|^2-1)u\Big|^2\, ds 
     + \frac{8}{r_\eps\eps^2}\int_{\omega_{r_\eps}} (1-|u_\eps|^2)^2 \, dx+\frac{C''}{r_\eps} F(r_\eps)\\
     &\le C'''\left[\Upsilon^2\int_{\Gamma_{r_\eps}}W(u,g)\, ds + 
     \frac{8}{r_\eps\eps^2}\int_{\omega_{r_\eps}} (1-|u_\eps|^2)^2 \, dx+\frac{F(r_\eps)}{r_\eps}\right]\\
     &\le C''''\eta\left\{ \eps^{-s} +  \eps^{-\gamma}\right\}\\
     &\le C \eps^{-s},
\end{align*}
 using \eqref{eta3} in the next to last line.  By the Sobolev embedding theorem (on the one-dimensional set $\Gamma_{r_\eps}$,) there exists a constant $C>0$ independent of $x_0$ and of $\eps$ for which
\beq\label{Hol}  |u_\eps(x)-u_\eps(y)| \le C\sqrt{|x-y|}\eps^{-s/2} 
\eeq
holds for all $x,y\in\Gamma_{r_\eps}$.

The conclusion now follows as in Proposition~3.6 of \cite{Ku}.  Assume there exists $x_2\in\Gamma_{r_\eps}$ for which $W(u,g)>a$.  Using \eqref{Hol}, there would exist a radius $\rho=c\eps^{s}$, for constant $c>0$ independent of $x_0$, for which
$W(u,g)>\frac{a}2$ when $x\in \Gamma_{r_\eps}\cap B_{\rho}(x_2)$ .  In that case, we would have, by \eqref{eta3},
$$  C\eta\ge \Upsilon\int_{\Gamma_{r_\eps}\cap B_{c\eps^{s}}} W(u,g)\, ds
> \Upsilon\frac{a}{2} 2\pi c \eps^{s}=\pi a c', $$
which would lead to a contradiction for $\eta$ chosen sufficiently small.  By reducing the value of $\eta$ required for the proof of \eqref{eta1} if necessary, we obtain \eqref{eta2}.  Thus there exists $\eta>0$ for which all three statements are valid and this completes the proof of Proposition\ref{eta}.
\end{proof}

\begin{corollary}\label{easycor}
Let $(u_\eps)_{\eps>0}$ be a family of solutions with $E_\eps(u_\eps)\le K |\ln \eps|$ and $\frac34s<\gamma<s$. Then, $\forall x_0\in\bar\Omega$,
$$\frac{1}{2\eps^2}\int_{B_{\eps^\gamma}(x_0)} \left(|u_\eps|^2-1\right)^2\, dx + 
      {\Upsilon}\int_{\Gamma\cap B_{\eps^\gamma}(x_0)} W(u_\eps, g) \, ds \le C(K)
      $$
\end{corollary}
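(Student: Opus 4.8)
The plan is to observe that the asserted inequality is essentially \eqref{eta3} with the \textit{a priori} threshold $\eta$ replaced by the fixed (not necessarily small) constant $K$: as was already remarked right after \eqref{XXX}, the derivation of \eqref{eta3} from Lemma~\ref{poho} never used smallness of $\eta$, only finiteness. So I would first split according to whether the ball $B_{\eps^\beta}(x_0)$ meets $\Gamma$, and in the boundary case simply re-run the proof of Proposition~\ref{eta} with $K|\ln\eps|$ in place of $\eta|\ln\eps|$.

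Concretely, fix exponents $\beta',\beta$ with $\tfrac34 s\le\beta'<\beta<\gamma$, which is possible precisely because $\tfrac34 s<\gamma$. Let $x_0\in\bar\Omega$ and take $\eps$ small. If $\mathrm{dist}(x_0,\Gamma)\ge\eps^\beta$, then $B_{\eps^\gamma}(x_0)\subset B_{\eps^\beta}(x_0)\subset\Omega$, the $\Gamma$-term is absent, and $\tfrac1{2\eps^2}\int_{B_{\eps^\gamma}(x_0)}(|u_\eps|^2-1)^2\,dx\le C(K)$ follows from the interior analogue of Lemma~\ref{poho} together with $E_\eps(u_\eps;B_{\eps^\beta}(x_0))\le K|\ln\eps|$ (cf.\ \cite{Struwe}), which again needs no smallness of the energy. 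If instead $\mathrm{dist}(x_0,\Gamma)<\eps^\beta$, I pick a nearest boundary point $\tilde x_0\in\Gamma$, so $|x_0-\tilde x_0|<\eps^\beta$, and I will bound the left-hand side over the larger set $\omega_{r_\eps}(\tilde x_0)$ for a suitable radius $r_\eps>2\eps^\beta>\eps^\gamma$.

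As in \eqref{Fest}, $E_\eps\big(u_\eps;\omega_{\eps^{\beta'}}(\tilde x_0)\setminus\omega_{2\eps^\beta}(\tilde x_0)\big)=\int_{2\eps^\beta}^{\eps^{\beta'}}\tfrac{F(r)}{r}\,dr\le E_\eps(u_\eps)\le K|\ln\eps|$, while $\int_{2\eps^\beta}^{\eps^{\beta'}}\tfrac{dr}{r}\ge\tfrac12(\beta-\beta')|\ln\eps|$ for $\eps$ small; hence the mean value theorem yields $r_\eps\in(2\eps^\beta,\eps^{\beta'})$ with $F(r_\eps)\le\tfrac{2K}{\beta-\beta'}$. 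Applying Lemma~\ref{poho} at $\tilde x_0$ with radius $r_\eps$ (legitimate since $r_\eps<r_0$) gives
\[
\frac{1}{2\eps^2}\int_{\omega_{r_\eps}(\tilde x_0)}(|u_\eps|^2-1)^2\,dx+\Upsilon\int_{\Gamma_{r_\eps}(\tilde x_0)}W(u_\eps,g)\,ds\le C\Big\{r_\eps\!\!\int_{\omega_{r_\eps}(\tilde x_0)}\!\!|\nabla u_\eps|^2\,dx+F(r_\eps)+r_\eps^2\Upsilon\Big\}.
\]
On the right, $r_\eps\int_{\omega_{r_\eps}}|\nabla u_\eps|^2\,dx\le 2\eps^{\beta'}E_\eps(u_\eps)\le 2K\eps^{\beta'}|\ln\eps|\to 0$, $F(r_\eps)\le\tfrac{2K}{\beta-\beta'}$, and $r_\eps^2\Upsilon\le\eps^{2\beta'-s}\to 0$ since $2\beta'-s\ge\tfrac s2>0$; hence the right-hand side is $\le C(K)$. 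Finally, for $y\in B_{\eps^\gamma}(x_0)$ one has $|y-\tilde x_0|<\eps^\gamma+\eps^\beta<2\eps^\beta<r_\eps$, so $\omega_{r_\eps}(\tilde x_0)\supset B_{\eps^\gamma}(x_0)\cap\Omega$ and $\Gamma_{r_\eps}(\tilde x_0)\supset\Gamma\cap B_{\eps^\gamma}(x_0)$, and the claimed estimate follows.

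I do not expect a real obstacle here. The only substantive point is the one already flagged, namely that Lemma~\ref{poho} and the interior estimate of \cite{Struwe} hold without any smallness hypothesis on the energy; the rest is bookkeeping of the exponents so as to guarantee simultaneously $2\eps^\beta>\eps^\gamma$ (i.e.\ $\beta<\gamma$), $\beta'<\beta$, and $2\beta'-s>0$, all of which is afforded by the strict inequality $\tfrac34 s<\gamma$.
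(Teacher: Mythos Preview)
Your proposal is correct and follows essentially the same approach as the paper: both observe that the derivation of \eqref{XXX} from Lemma~\ref{poho} requires no smallness of $\eta$, so one may simply take $\eta=K$. You are more careful than the paper's two-line proof in explicitly reducing near-boundary points $x_0$ to a neighboring $\tilde x_0\in\Gamma$ and in tracking the auxiliary exponents, but the core idea is identical.
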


\begin{proof} As mentioned in the proof of Proposition\ref{eta}, \eqref{XXX} holds for any $\eta$ as long as 
$E(u_\eps; B_{\eps^\beta}(x_0))\le \eta|\ln\eps|$, which is clearly satisfied with $\eta=K$. Since $B_{\eps^\beta}(x_0)\subset B_{r_\eps}(x_0)$, the conclusion follows.
\end{proof}

\subsection*{Defining bad balls}

We define the family of sets
$$  S_{\eps} =\left\{ x\in\Omega: \  |u_\eps(x)|^2 <1-\sqrt2 a\ \right\},\quad
  T_{\eps} =\left\{ x\in\partial\Omega: \ W(u_\eps, g)>a^2\right\}.
$$

Following Lemmas~3.1 and 3.2 of \cite{Struwe}, we show that the sets $S_{\eps}, T_{\eps}$ which include the defects may be contained in a bounded number of vary small balls.
\begin{lemma}\label{sballs}
There exists $N_0=N_0(a,g,s,\alpha)$, $\kappa>1$, and points $p_{\eps,1},\dots,p_{\eps,I_\eps}\in S_\eps$, $y_{\eps,1},\dots,y_{\eps,J_\eps}\in T_\eps\cap\Gamma$ such that 
\begin{enumerate}
\item[(i)]  $I_\eps + J_\eps\le N_0$;
\item[(ii)] 
$\{ B_{\kappa\eps}(p_{\eps,i}), B_{\kappa\eps^s}(y_{\eps,j})\}_{1\le i\le I_\eps, 1\le j\le J_\eps}$ are mutually disjoint; more precisely

$|p_{\eps,i}-p_{\eps,j}|>8\kappa \eps$, $|p_{\eps,i}-y_{\eps,j}|>8\kappa \eps^s$ and $|y_{\eps,i}-y_{\eps,j}|>8\kappa \eps^s$
\item[(iii)]
\be\label{5cover}   S_\eps \subset \bigcup_{i=1}^{I_\eps}B_{\kappa\eps}(p_{\eps,i})\text{ and } 
   T_\eps \subset \bigcup_{j=1}^{J_\eps}B_{\kappa\eps^s}(y_{\eps,j}).  
\ee
\end{enumerate}
\end{lemma}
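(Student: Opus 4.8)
The strategy is the standard Vitali-type covering argument adapted to two scales ($\eps$ in the interior, $\eps^s$ on the boundary), following Struwe \cite{Struwe}. The key point is that $\eta$-compactness (Proposition~\ref{eta}) forces a definite amount of energy into any ball of radius $\eps^\beta$ that meets $S_\eps\cup T_\eps$, and the total energy is only $O(|\ln\eps|)$, so only boundedly many disjoint such balls can fit.

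\textbf{Step 1 (lower energy density near defects).} Fix $\kappa>1$ to be chosen. If $p\in S_\eps$, then $|u_\eps(p)|^2<1-\sqrt2 a$, so by the contrapositive of \eqref{eta1} in Proposition~\ref{eta} we must have $E_\eps(u_\eps;B_{\eps^\beta}(p))>\eta|\ln\eps|$; similarly if $y\in T_\eps\cap\Gamma$, the contrapositive of \eqref{eta2} gives $E_\eps(u_\eps;B_{\eps^\beta}(y))>\eta|\ln\eps|$ (here $\beta$ is the exponent fixed before Proposition~\ref{eta}, with $\tfrac34 s\le\beta<\gamma<s$, and $\gamma$ is chosen to make both $S_\eps$ and $T_\eps$ detectable — note $B_{\eps^\gamma}\supset B_{\eps^\gamma}$ trivially, and one uses radius $\eps^\gamma\le\eps^\beta$ appropriately). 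Thus every point of $S_\eps\cup(T_\eps\cap\Gamma)$ is the center of a ``heavy'' ball $B_{\eps^\beta}$ carrying energy $>\eta|\ln\eps|$.

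\textbf{Step 2 (Vitali selection).} Apply the Vitali covering lemma to the family $\{B_{\eps^\beta}(x):x\in S_\eps\cup(T_\eps\cap\Gamma)\}$ to extract a finite maximal disjoint subfamily centered at points $z_1,\dots,z_M$ (separating interior points $p_{\eps,i}$ from boundary points $y_{\eps,j}$ according to whether the center lies in $\Omega$ or on $\Gamma$), with the property that the concentric balls of radius $5\eps^\beta$ cover $S_\eps\cup(T_\eps\cap\Gamma)$. Disjointness of the $B_{\eps^\beta}(z_k)$ and the energy lower bound of Step~1 give
$$
M\,\eta|\ln\eps| < \sum_{k=1}^M E_\eps(u_\eps;B_{\eps^\beta}(z_k)) \le E_\eps(u_\eps;\Omega)\le K|\ln\eps|,
$$
so $M\le K/\eta=:N_0$, a bound independent of $\eps$. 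This gives (i) and a version of (iii), after noting that balls of radius $5\eps^\beta$ can be shrunk to radius $\kappa\eps$ (interior) or $\kappa\eps^s$ (boundary) by merging overlapping ones: since $\beta<s<1$, $\eps^\beta\gg\eps^s\gg\eps$ so a ball of radius $5\eps^\beta$ covering part of $S_\eps$ is itself contained in a ball of radius, say, $\eps^{\beta'}$ for any $\beta'<\beta$, and iterating the merging within the bounded family eventually produces the small radii $\kappa\eps$, $\kappa\eps^s$ while preserving the covering and boundedness of the count. (This shrinking/merging step is where one uses the two-scale structure carefully.)

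\textbf{Step 3 (disjointness, (ii)).} Having only $N_0$ balls, enlarge $\kappa$ if necessary and merge any two of the small balls whose enlargements by a factor $8$ intersect; since each merge strictly reduces the number of balls and the number is at most $N_0$, the process terminates, yielding the separation estimates $|p_{\eps,i}-p_{\eps,j}|>8\kappa\eps$, $|p_{\eps,i}-y_{\eps,j}|>8\kappa\eps^s$, $|y_{\eps,i}-y_{\eps,j}|>8\kappa\eps^s$, at the cost of possibly enlarging $\kappa$ by a bounded (depending only on $N_0$) factor. The covering \eqref{5cover} survives because merging only enlarges the union.

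\textbf{Main obstacle.} The genuinely delicate point is reconciling the two length scales in Steps~2--3: the $\eta$-compactness detection happens at the \emph{coarse} scale $\eps^\beta$, but the lemma demands covering balls at the \emph{fine} scales $\eps$ (interior) and $\eps^s$ (boundary), and one must be sure that interior defect points are captured by $\eps$-balls and genuine boundary defects by $\eps^s$-balls without, e.g., an interior point near $\Gamma$ spuriously forcing a large $\eps^s$-ball. This is handled exactly as in Lemmas~3.1--3.2 of \cite{Struwe}: one first does the covering at scale $\eps^\beta$ to get the bounded count, then uses the a priori gradient bound $|\nabla u_\eps|\le C_1/\eps$ (Lemma~\ref{apriori}) together with Corollary~\ref{easycor} to argue that within each $\eps^\beta$-ball the set $S_\eps$ is actually confined to $O(\eps)$-balls and $T_\eps$ to $O(\eps^s)$-balls — the gradient bound controls how fast $|u_\eps|$ can recover from below $1-\sqrt2 a$ back to near $1$, pinning $S_\eps$ to the $\eps$-scale, while the Hölder bound \eqref{Hol} on $\Gamma$ pins $T_\eps$ to the $\eps^s$-scale. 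Absorbing these sub-balls into the bounded family and renaming their centers completes the proof.
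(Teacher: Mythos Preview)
Your plan is correct and follows essentially the same route as the paper: use the contrapositive of Proposition~\ref{eta} to get a definite energy quantum in each $B_{\eps^\beta}$-ball meeting $S_\eps\cup T_\eps$, apply Vitali to bound the count, then refine to the scales $\eps$ and $\eps^s$ using the gradient bound (for $S_\eps$) and the H\"older estimate \eqref{Hol} (for $T_\eps$), and finally merge \`a la \cite{BBH2} to achieve the separation in (ii). One phrasing to tighten: in Step~2 you speak of ``shrinking'' the $5\eps^\beta$-balls to $\kappa\eps$ or $\kappa\eps^s$ by ``merging overlapping ones,'' which reads backwards; what actually happens (and what you say correctly in your Main Obstacle paragraph) is a \emph{second} Vitali-type selection at the fine scale, using that each point of $T_\eps$ carries a definite amount of the \emph{bounded} quantity $\Upsilon\int W(u,g)\,ds$ in a ball of radius $c\eps^s$, and similarly for $S_\eps$ with $\eps^{-2}\int(|u|^2-1)^2$---exactly the argument the paper gives.
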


\begin{proof}[Proof]
The is essentially the same as in Struwe \cite{Struwe}, who considered the case of Dirichlet boundary conditions, for which all of the ``bad balls'' have the same radius $\eps$, so here we need to make some modification due to our boundary conditions.  As the existence of $\eps$-balls covering $S_\eps$ is the same as in \cite{Struwe}, we only need to treat $T_\eps$. 

By the $\eta$-compactness Proposition \ref{eta},
if $y\in T_\eps$, it follows that $E_\eps(u_\eps;B_{\eps^\beta}(y))>\eta|\ln\eps|$. Furthermore,
 applying the Vitali's covering Theorem to the collection $(B_{\eps^\beta}(y))_{y\in T_\eps}$, there is a finite choice $y_1,\dots,y_{N_1}\in\overline{T_\eps}$ for which $(B_{\eps^\beta}(y_i))_{i=1,\dots,N_1}$ are disjoint, and 
 $\left[\Omega\cap\bigcup_{y\in T_\eps} B_{\eps^\beta}(y)\right]\subset
 \left(\bigcup_{i=1}^{N_1} B_{5\eps^\beta}(y_i)\right)$. Therefore it follows:
 \begin{align}
 N_1\eta|\ln\eps|&\le\sum_{i=1}^{N_1} E_\eps(u_\eps; B_{\eps^\beta}(y_i))
   \le E_\eps(u_\eps) 
   \le K|\ln\eps|,
 \end{align}
 which means that $N_1=N_1(\eps)$ is uniformly bounded from above.

Next, using the same argument as in \eqref{Fest}, there exists $r_\eps\in (\eps^\gamma,\eps^\beta)$ such that 
$$(\gamma-\beta) |\ln\eps| F(r_\eps)\le E(u_\eps;\omega_{\eps^\beta\setminus\eps^\gamma}(y))\le K|\ln\eps|, \text{ i.e. } 
F(r_\eps)\le C_1$$
$\forall \eps, y\in T_\eps$, so by Lemma~\ref{poho} we obtain the uniform estimate
\begin{align*}  \frac{1}{2\eps^2}\int_{\omega_{r_\eps}(y)} (|u_\eps|^2-1)^2\, dx
         + \Upsilon\int_{\Gamma_{r_\eps}(y)} W(u,g)\, ds
         &\le C(F(r_\eps) + r_\eps^2\Upsilon + O(1))\\
            & \le C_2,
\end{align*}
uniformly in $\eps, y\in T_\eps$.

On the other hand, by the H\"older bound arguments employed in the proof of Proposition~\ref{eta} (see \eqref{Hol},) there exists  constants $c_1, c_2$ (independent of $\eps$) such that $\forall y\in T_\eps$, 

$$  \Upsilon_\eps\int_{B_{c_1\eps^s}(y)} W(u,g)\, dx
   \ge c_2>0,
$$ 
independently of $\eps, y\in T_\eps$.

Now following Struwe (see Lemma 3.2 in \cite{Struwe}) and using Vitali's covering Theorem again, we conclude that there exists a finite collection $y_1, \dots, y_J\in T_\eps$, with $J$ uniformly bounded in $\eps$ such that the sets { $\left\{ B_{ {\eps^s}}(y_j)\right\}_{j=1,\dots,J}$ are disjoints and $T_\eps\subset \bigcup_{j=1}^J B_{5\eps^s}(y_j)$}.
Finally, by the same argument as that of Theorem IV.1 of \cite{BBH2}, by enlarging and if necessary fusing together vortex balls which intersect, we may find $\kappa>1$ and modified centers $p_i, y_j$ for which (ii) holds.

\end{proof}

\section{Classifying the defects}

Our goal in this section is to classify defects $x_0\in\overline{\Om}$, defined as a center of one of the ``bad balls'' constructed in Lemma~\eqref{sballs}, and associate a degree to each.  For any $x_0\in \overline{\Omega}$, and $0<r<R<\infty$, denote the annulus centered at $x_0$ by
$$  A_{r,R}(x_0)=w_R(x_0)\setminus w_r(x_0).   $$
We will analyze the structure of $u_\eps$ in such annuli around defects.

We begin with a lemma which shows the energy densities are coercive near their minima:

\begin{lemma}\label{W-lemma}
For any $\alpha\in (0,\frac{\pi}2),$ we can find a constant $C_{\alpha}>0$ and $a_0=a_0(\alpha)$ such that for $u\in \C$ and $g=e^{i\gamma}\in S^1\subset\C$ with $W(u,g)<a_0^2$ we may represent  $u=fe^{i\psi}$ with

\beq
\begin{gathered}\label{a0}
|f^2-1|\le \sqrt{2W(u,g)}<\sqrt{2}a_0\text{ and }\\
\text{ either } |\psi-\gamma-\alpha|<C_{\alpha} \sqrt{W(u,g)}
\text{ or } |\psi-\gamma+\alpha|<C_{\alpha} \sqrt{W(u,g)}
\end{gathered}
\eeq
\end{lemma}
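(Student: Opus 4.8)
The plan is to read off both assertions from the definition $W(u,g)=\frac12(|u|^2-1)^2+[(u,g)-\cos\alpha]^2$, combined with an elementary quantitative inversion of $\cos$ near the fixed angle $\alpha$. Write $w:=W(u,g)$. First I would note that $\frac12(|u|^2-1)^2\le w$ gives $|\,|u|^2-1\,|\le\sqrt{2w}$, which is the first inequality in \eqref{a0}; choosing $a_0=a_0(\alpha)$ so small that $\sqrt2\,a_0<\frac12$, this forces $|u|^2\ge1-\sqrt2\,a_0\ge\frac12$, so $u\neq0$ and one may write $u=fe^{i\psi}$ with $f=|u|\ge2^{-1/2}$. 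I would fix the lift of $\psi$ so that $\phi:=\psi-\gamma$ lies in $(-\pi,\pi]$, and record the consequence $|f-1|=|f^2-1|/(f+1)\le\sqrt{2w}$.

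Next, using $(u,g)=\Re(u\bar g)=f\cos\phi$, the second term of $W$ gives $|f\cos\phi-\cos\alpha|\le\sqrt w$, whence $|\cos\phi-\cos\alpha|\le|f\cos\phi-\cos\alpha|+|f-1|\,|\cos\phi|\le\sqrt w+\sqrt{2w}\le3\sqrt w$. Then I would invoke the following elementary fact about the fixed angle $\alpha\in(0,\frac\pi2)$: since $\theta\mapsto\cos\theta$ is strictly decreasing on $[0,\pi]$ with $\cos'(\alpha)=-\sin\alpha\neq0$, there exist $\rho_0(\alpha),\delta_0(\alpha)>0$ with $|\cos\theta-\cos\alpha|\ge\frac{\sin\alpha}{2}|\theta-\alpha|$ for $\theta\in[0,\pi]$, $|\theta-\alpha|\le\rho_0$, and $|\cos\theta-\cos\alpha|\ge\delta_0$ for $\theta\in[0,\pi]$, $|\theta-\alpha|>\rho_0$ (the latter by monotonicity). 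Shrinking $a_0$ so that $3a_0<\delta_0$, the estimate $|\cos\phi-\cos\alpha|\le3\sqrt w<\delta_0$ forces $\big|\,|\phi|-\alpha\,\big|\le\rho_0$ (else the second alternative above would contradict it), hence, applying the first inequality with $\theta=|\phi|\in[0,\pi]$ and using $\cos\phi=\cos|\phi|$, we get $\big|\,|\phi|-\alpha\,\big|\le\frac2{\sin\alpha}|\cos\phi-\cos\alpha|\le\frac6{\sin\alpha}\sqrt w$.

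Finally I would unravel the absolute value: if $\phi\ge0$ then $|\psi-\gamma-\alpha|=|\phi-\alpha|\le\frac6{\sin\alpha}\sqrt w$, and if $\phi<0$ then $|\psi-\gamma+\alpha|=|\phi+\alpha|=\big|\,|\phi|-\alpha\,\big|\le\frac6{\sin\alpha}\sqrt w$; taking $C_\alpha:=\frac6{\sin\alpha}$ (a constant depending only on the fixed angle $\alpha$) yields \eqref{a0}, the degenerate case $w=0$ being trivial since then $u=e^{i(\gamma\pm\alpha)}$ exactly, and if one insists on the strict inequalities exactly as written one simply enlarges $C_\alpha$ slightly. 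I do not anticipate any genuine obstacle here; the only points that need care are the choice of the branch of $\psi$ so that $\phi$ sits inside a single period, and the choice of $a_0$ small enough (depending on the fixed $\alpha$) that $f$ stays bounded away from $0$ and the local invertibility of $\cos$ at $\alpha$ is applicable.
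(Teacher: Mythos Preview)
Your proof is correct and follows essentially the same approach as the paper's: both read off $|f^2-1|\le\sqrt{2W}$ directly, reduce to an estimate $|\cos\phi-\cos\alpha|\le C\sqrt{W}$, and then invert $\cos$ near $\alpha$ using monotonicity on $[0,\pi]$. The only cosmetic difference is that the paper isolates $\cos\phi$ by dividing through by $f$ (using the two-sided bound on $f^2$), whereas you use the triangle inequality via $|f-1|\le\sqrt{2w}$; your version has the small advantage of making the constant $C_\alpha$ explicit.
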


Note that by choosing $a_0$ sufficiently small, the intervals 
\be\label{intervals} \Ipm_\pm:= \{\psi \ : \ |\psi-\gamma\pm\alpha|<C_{\alpha} \sqrt{W(u,g)} \}
\ee
 will be disjoint. In particular, in places where $W$ is small, we know that $u$ must be close to either $e^{i(\gamma\pm \alpha)}$, but not both.

\begin{proof}[Proof of the Lemma~\ref{W-lemma}]
Let $a:=\sqrt{W(u,g)}<a_0$, $a_0$ to be determined.
$$\frac12(|u|^2-1)^2\le a^2\iff 1-\sqrt2 a\le |u|^2\le 1+\sqrt2 a
$$
It follows that for $a_0<\frac14$, $a\in (0,a_0)$, we may write $u=fe^{i\psi}$ with
$$1-\sqrt2 a\le f^2\le 1+\sqrt2 a\text{ and } \phi:=\psi-\gamma\in [-\pi,\pi].$$
This choice of $\phi$ is natural since we have in addition,
$$[(u,g)-\cos\alpha]^2\le a^2\iff \cos\alpha -a\le f\cos\phi\le\cos\alpha + a.$$
Therefore
$$\frac{\cos\alpha-a}{\sqrt{1+\sqrt2 a}}\le \cos\phi\le \frac{\cos\alpha+a}{\sqrt{1-\sqrt2 a}},$$
 and it follows:
\begin{align}
\cos\phi&\le (\cos\alpha + a )(1+4a)< \cos\alpha + C_1 a\\
\cos\phi&\ge(\cos\alpha - a )(1-a)> \cos\alpha - C_1 a,
\end{align}
and hence
$$|\cos\phi-\cos\alpha|<C_1 a.$$

Since $\alpha\in (0,\frac{\pi}2)$, and $\phi\in(-\pi,\pi)$, choosing $a_0=a_0(\alpha)$ sufficiently small, $\forall a\in [0,a_0)$, we have
$$\{\phi\in(-\pi,\pi):|\cos\phi-\cos\alpha|<C_1 a\}\subset (-\alpha-C_\alpha a, -\alpha+C_\alpha a)\cup (\alpha-C_\alpha a, \alpha+C_\alpha a).$$
Recalling that $a=\sqrt{W(u,g)}$,  this completes the proof.

\end{proof}

For the remainder of the paper, we fix once and for all a value $a_0=a_0(\alpha) $ such that the intervals $\Ipm$ in \eqref{intervals} where $\psi-\gamma$ is close to either $\alpha$ or $-\alpha$ are disjoint.  

\medskip

We now treat the question of classification of defects, and their associated degrees.  If $x_0\in\Om$ then the defect is an interior vortex, and its degree $d(x_0)\in\ZZ$ is defined in the usual way.  When $x_0\in\Gamma=\partial\Om$ the situation is more interesting and more subtle.

In case $x_0\in\Gamma$, for $R$ sufficiently small the piece of the boundary $\partial A_{r,R}(x_0)\cap\partial\Omega$ consists as before of exactly two arcs along $\Gamma_R=\Gamma\cap B_R(x_0)$, which we will denote by $\Gamma_{r,R}^\pm$.  (See Figure~\ref{fig:Ar}.)  We recall from the upper bound construction in Lemma~\ref{upperbound} that $\Gamma_R^\pm$ may be parametrized as
$$\Gamma_R^{\pm}=\{(\rho,\theta_{\pm}(\rho)):\ 0<\rho\le R\},  $$
for smooth $\theta_\pm(\rho)$, with $\theta_+(\rho)=O(\rho)$ and $\theta_-(\rho)=\pi+O(\rho)$.

We now apply Proposition~\ref{eta} to $u_\eps$ to conclude that for any $0<r<R$ with $A_{r,R}(x_0)$ disjoint from the bad balls covering $S_\eps\cap T_\eps$, we have $|u_\eps|^2\ge 1-\sqrt{2}a$ in $A_{r,R}(x_0)$ and, for $x_0\in\Gamma$, $W(u_\eps,g)\le a^2$ on $\Gamma_R^\pm$.  In particular, Lemma~\ref{W-lemma} applies and we obtain the representation 
\be\label{polar}   u_\eps= f(\rho,\theta) e^{i\psi(\rho,\theta)}, \quad
    \text{with} \quad |f^2-1|<\sqrt 2 a \text{ in }A_{r,R},
\ee
and the phase $\psi$ on $\Gamma_{r,R}^{\pm}(x_0)$
is chosen with  either $\psi\in\Ipm_-$ or $\psi\in\Ipm_+$, that is,
\begin{enumerate}
\item[(I)]$|\psi-\gamma-\alpha|<C_\alpha\sqrt{W(u,g)}<C_\alpha a \text{ or }$\\
\item[(II)] $|\psi-\gamma+\alpha| < C_\alpha\sqrt{W(u,g)}<C_\alpha a,$
\end{enumerate}
By the continuity of $g=e^{i\gamma}$, for $R$ small we can treat $\gamma(x)\simeq \gamma_0:=\gamma(x_0)$ on $\Gamma_R$, and in fact the complex phase difference of $u$ along each of $\Gamma_R^\pm$  is also small (on the order of $R$)  and hence the winding of the phase around a boundary vortex occurs principally around the half-circle $\partial B_R(x_0)\cap\Omega$. Introduce polar coordinates $(\rho,\theta)$ centered at $x_0$, with $\theta$ measured from the unit tangent $\tau$ to $\Gamma$ at $x_0$.  (See Figure~\ref{fig:Ar}.)

We distinguish three possibilities for each boundary defect $x_0\in\Gamma$, define the degree, and introduce a new topological index $\tau(x_0)\in \{-1,0,1\}$, the ``boojum number".


\medskip

\noindent {\sc Classification of boundary defects:}
\begin{enumerate}

\item[(i)] {\sl ``Light boojums".}

In this case, (I) holds on $\Gamma^-_{r,R}(x_0)$ while (II) holds on $\Gamma^+_{r,R}(x_0)$. This means that the phase decreases by $2\alpha$ (modulo $2\pi)$ along $\Gamma_R$. 
So let $n(x_0)\in\ZZ$ be the number of multiple of $2\pi$ by which the phase increases around $x_0$. Note that $n(x_0)$ represents the degree at $x_0$.
In particular, we may write 
$u(\rho,\theta)=f(\rho,\theta)\exp(i\psi(\rho,\theta))$ in polar coordinates centered at $x_0$,
 with phase
\be\label{phase_light}
\psi(\rho,\theta)=\gamma_0-\alpha+2\frac{\theta}{\pi}(\alpha + n(x_0)\pi )+\phi(\rho,\theta),  \quad \theta_+(\rho)<\theta<\theta_-(\rho),
\ee
with $\phi$ a smooth single-valued function in $A_{r,R}(x_0)$.  Note that for light boojums, $(iu_\eps,g)=|u_\eps|\sin(\psi-\gamma)$ changes sign, from positive to negative,  moving counter-clockwise across the boundary defect $x_0$.   We define the boojum number $\tau(x_0)=-1$ for a light boojum.

\item[(ii)]{\sl ``Heavy boojums".}

In this case, (II) holds on $\Gamma^-_{r,R}(x_0)$ while (I) holds on $\Gamma^+_{r,R}(x_0)$. This means that the phase increases by $2\alpha$ (modulo $2\pi)$ along $\Gamma_R$.
Again, let $n(x_0)\in\ZZ$ be the number of multiple of $2\pi$ by which the phase increases.  As above, $n(x_0)$ represents the degree of a heavy boojum, and using polar coordinates centered at $x_0\in\Gamma$, we may write 
$u(\rho,\theta)=f(\rho,\theta)\exp(i\psi(\rho,\theta)),$
 with phase
\be\label{phase_heavy}
\psi(\rho,\theta)=\gamma_0+\alpha+2\frac{\theta}{\pi}(-\alpha+n(x_0)\pi )+\phi(\rho,\theta), \quad \theta_+(\rho)<\theta<\theta_-(\rho),
\ee
with $\phi$ a smooth single-valued function in $A_{r,R}(x_0)$.  As for light boojums, $(iu_\eps,g)=|u_\eps|\sin(\psi-\gamma)$ changes sign across the defect $x_0$, but for heavy boojums it goes from negative to positive as we move counter clockwise.  The boojum number for a heavy boojum is $\tau(x_0)=+1$.

\item[(iii)] {\sl ``Boundary vortices".}

This occurs when either (I) or (II) holds on both $\Gamma_{r,R}^{\pm}(x_0)$.  In particular, the phase $\psi$ rotates by $2\pi n$ along $\partial B_\rho(x_0)\cap\Om$, with $n\in\ZZ$, so in polar coordinates we may write
\be\label{phase_vtx}
\psi(\rho,\theta)=\gamma_0\pm \alpha + 2n\theta + \phi(\rho, \theta),
\quad \theta_+(\rho)<\theta<\theta_-(\rho),
\ee
for smooth, single-valued $\phi(\rho, \theta)$ in $A_{r,R}(x_0)$.  The degree associated with the boundary vortex is $n=n(x_0)\in\ZZ$, and the boojum number $\tau(x_0)=0$.  Note that the sign of $(iu_\eps,g)=|u_\eps|\sin(\psi-\gamma)$ does not change across a boundary vortex.
\end{enumerate}

\begin{remark}\label{degree_rem} \rm If we extend the modulus and phase $f_\eps,\psi_\eps$ to all of $\Gamma_R$ by linearly interpolating in $\Gamma_r$ from the values in $\Gamma_R^\pm$, we may define a nonvanishing extension $\tilde u_\eps=\tilde f_\eps e^{i\tilde\psi_\eps}$ of $u_\eps$ to all of $\Gamma_R$.  Setting $\tilde u_\eps=u_\eps$ on $\partial B_R(x_0)\cap\Omega$, we obtain an $S^1$-valued map ${\tilde u\over |\tilde u|}: \ \partial\omega_R(x_0)\to  S^1$, whose degree measured on $\partial\omega_R(x_0)$ is $n(x_0)$, as defined above.
\end{remark}

\medskip

\begin{remark}
It is here that we see that the case $\alpha\in (\frac{\pi}2, \pi)$ is the same as the case $\alpha\in (0,\frac{\pi}2)$: it only exchanges the role of ``heavy" and ``light" boojums.
\end{remark}

\medskip

Now that we have defined degrees corresponding to the bad balls constructed in Lemma~\ref{sballs}, we may verify that they must always sum to the degree $\mathcal{D}=\deg(g;\partial\Om)$:

\begin{lemma}\label{degree_lemma}
Let $\{ B_{\kappa\eps}(p_{\eps,i})\}_{1\le i\le I_\eps}$, $\{B_{\kappa\eps^s}(y_{\eps,j})\}_{1\le j\le J_\eps}$ be as in Lemma~\ref{sballs}, and $d_i=n(p_{\eps,i})$, $n_j=n(y_{\eps,j})\in\ZZ$ be their degrees.  Then,
$$  \deg(g;\partial\Om)=\mathcal{D}=\sum_{i=1}^{I_\eps} d_i + \sum_{j=1}^{J_\eps} n_j.  $$
\end{lemma}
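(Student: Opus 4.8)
The plan is to compute the degree of $g$ on $\partial\Omega$ by a deformation/additivity argument, splitting $\Gamma$ into the pieces near each bad ball and the complementary ``good'' region where $u_\eps$ is essentially $S^1$-valued with controlled phase. First I would fix $\eps$ small and choose, for each $i$, a radius $R_i$ (of the scale fixed in Lemma~\ref{sballs}, so $R_i \gg \kappa\eps$ or $\kappa\eps^s$ respectively) so that the annuli $A_{\kappa\eps,R_i}(p_{\eps,i})$ and $A_{\kappa\eps^s,R_j}(y_{\eps,j})$ are pairwise disjoint, are disjoint from all other bad balls, and (for boundary balls) meet $\Gamma$ in two arcs $\Gamma^\pm$ as in the classification above. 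On each such annulus Proposition~\ref{eta} and Lemma~\ref{W-lemma} give $|u_\eps|^2 \ge 1-\sqrt2\,a$ and $W(u_\eps,g)\le a^2$ on $\Gamma\cap A$, so $\tilde u_\eps := u_\eps/|u_\eps|$ is a well-defined $S^1$-valued map there, and similarly on the ``good'' part $\Gamma_{\mathrm{good}}$ of $\Gamma$ that lies outside all bad balls.

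Next I would use the extension device of Remark~\ref{degree_rem}: on each boundary annulus, extend $f_\eps,\psi_\eps$ across the inner arc $\Gamma_r$ by linear interpolation of the $\Gamma^\pm$ values to get a nonvanishing $\tilde u_\eps$ on all of $\omega_R(x_0)\setminus\overline{B_r(x_0)}$, whose restriction to $\partial\omega_R(x_0)$ has degree $n(x_0)$ (the degree assigned in the classification). For interior balls the degree $d_i$ is just $\deg(\tilde u_\eps;\partial B_{R_i}(p_{\eps,i}))$ in the usual sense. The key point is then a ``patching'' identity: consider the closed curve $\sigma$ obtained from $\Gamma$ by replacing, for each boundary bad ball, the arc $\Gamma\cap B_{R_j}(y_{\eps,j})$ with the half-circle $\partial B_{R_j}(y_{\eps,j})\cap\Omega$ (traversed so as to keep $\Omega$ on the left), and, for each interior bad ball, adjoining a small loop. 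Since $g=e^{i\gamma}$ is continuous on all of $\Gamma$ (degree $\mathcal D$) and $\tilde u_\eps$ agrees with $g$ up to a phase lying in the fixed interval $\Ipm_+$ or $\Ipm_-$ on each good arc — hence within a contractible set — the winding of $\tilde u_\eps$ along $\Gamma_{\mathrm{good}}$ equals that of $g$ along $\Gamma_{\mathrm{good}}$ up to a bounded single-valued correction that sums to zero around the closed loop. Additivity of winding number then gives
\[
\mathcal D \;=\; \deg(g;\Gamma) \;=\; \deg(\tilde u_\eps;\sigma) \;+\; \sum_{j} \big[\text{correction across }y_{\eps,j}\big] \;+\; \sum_i \big[\text{correction across }p_{\eps,i}\big],
\]
and by the definitions in the classification each boundary correction is exactly $n_j$ (this is where the $2\alpha$, $2\pi-2\alpha$, or $2\pi$ phase jumps are bookkept: in all three cases the net winding contributed by the half-circle $\partial B_{R_j}\cap\Omega$ minus the phase drift along $\Gamma^\pm$ equals $n_j$, since the $\pm\alpha$ shifts are absorbed into the endpoints of $\Ipm_\pm$), while each interior correction is $d_i$.

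The cleanest way to make this rigorous is via the integral formula for degree: write $\deg(g;\Gamma)=\frac{1}{2\pi}\int_\Gamma (ig,\partial_\tau g)\,ds = \frac1{2\pi}\int_\Gamma \partial_\tau\gamma\,ds$, and on each good arc replace $\gamma$ by $\psi_\eps \mp \alpha$ (which differs from $\gamma$ by at most $C_\alpha a$ pointwise, hence whose tangential derivative integrates to a boundary term bounded by $C_\alpha a$ at the two endpoints); then apply the divergence theorem / $\oint = \sum \oint$ decomposition of $\frac1{2\pi}\oint (i\tilde u_\eps, d\tilde u_\eps)$ over $\partial\omega_R(p_{\eps,i})$, $\partial\omega_{R}(y_{\eps,j})$, and $\partial\big(\Omega\setminus\bigcup \text{bad balls}\big)$. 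On the latter region $\tilde u_\eps$ is a smooth $S^1$-valued map with no zeros, so $\oint (i\tilde u_\eps,d\tilde u_\eps)=0$ around its boundary, which yields the claimed sum after identifying each boundary term with $d_i$ or $n_j$ and checking the $\pm\alpha$ endpoint corrections telescope to zero since along $\Gamma_{\mathrm{good}}$ consecutive arcs are joined at good boundary points where the choice $\psi\in\Ipm_+$ vs $\Ipm_-$ is locally constant.

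\textbf{Main obstacle.} The delicate point is the bookkeeping of the $\pm\alpha$ phase offsets at the transitions between good arcs and bad balls: one must verify that the ``boojum'' jumps of $2\alpha$ or $2\pi-2\alpha$ recorded in the classification are precisely the discrepancy between the integer $n(x_0)$ and the naive winding of $u_\eps$ around $\partial\omega_R(x_0)$, and that when these local contributions are summed over all defects together with the good arcs the fractional parts cancel exactly (they must, since the total $\deg(g;\Gamma)=\mathcal D$ is an integer). Concretely this requires tracking, consistently and with a fixed orientation, which of $\Ipm_+$ or $\Ipm_-$ the phase lies in on each boundary arc and checking that this label is the same on the two sides of every good arc — which follows from Lemma~\ref{W-lemma} (the intervals are disjoint) and connectedness of the good arc — so that the $\pm\alpha$ terms at its two endpoints have opposite sign and cancel in the telescoping sum. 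Once this combinatorial consistency is in place, the rest is the standard additivity of degree over a decomposition of $\Omega$ into the good region and the bad balls.
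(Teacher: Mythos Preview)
Your proposal is correct and follows essentially the same approach the paper indicates: the paper does not give a detailed proof but simply says that one excises half-disks around the boundary defects, redefines $u_\eps$ on the arcs $\Gamma_R(x_0)$ via the interpolation of Remark~\ref{degree_rem}, and then appeals to the additivity argument carried out in detail in \cite[Lemma 5.3(i)]{ABGS}. Your decomposition into the good region and the bad balls, together with the observation that $\tilde u_\eps/|\tilde u_\eps|$ has degree zero around the boundary of the good region, is exactly this argument; your ``main obstacle'' paragraph correctly isolates the one genuinely delicate point, namely that the $\pm\alpha$ offsets at the endpoints of the good arcs telescope to zero because the label $\psi\in\Ipm_+$ versus $\psi\in\Ipm_-$ is locally constant on each connected good arc (by Lemma~\ref{W-lemma}) and the residual boojum jumps are precisely what is recorded in the integers $n_j$ via Remark~\ref{degree_rem}.
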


The proof of Lemma~\ref{degree_lemma} involves excising half-disks around the boundary defects, and redefining $u_\eps$ on the arcs $\Gamma_R(x_0)$ as in Remark~\ref{degree_rem}.  The details may be found in part (i) of \cite[Lemma 5.3]{ABGS}.


\section{Energy lower bound for defects}

We are ready to prove lower bounds on the energy in annular regions around the defects.

\begin{proposition}\label{elb}  Suppose $E_\eps(u_\eps)\le K|\ln\eps|$ with constant $K$ independent of $\eps$.
Assume $x_0\in\Gamma=\partial\Omega$, $R>r>\eps^s$, and that $|u|^2\ge 1-\sqrt2 a$ on the annulus $A_{r,R}(x_0)$ and $W(u,g)\le a^2$ on $\Gamma_{r,R}^\pm$. 
 If $x_0$ has degree $n(x_0)$ and boojum number $\tau(x_0)\in\{-1,0,1\}$.  Then, there exists a constant $C$ (depending on $\alpha, a$ and $\partial\Om$,) such that:
\be\label{boojum_energy}
\frac12\int_{A_{r,R}}|\nabla u_\eps|^2\, dx
  \ge 2\pi\left(n(x_0)-\tau(x_0){\alpha\over\pi}\right)^2\ln(\frac{R}{r}) -C.
\ee
\end{proposition}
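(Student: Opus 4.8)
The plan is to reduce the lower bound to a one-dimensional estimate on circular arcs $\partial\omega_\rho(x_0) = \partial B_\rho(x_0)\cap\Omega$ for $r<\rho<R$, and then integrate in $\rho$. First I would use the representation $u_\eps = f e^{i\psi}$ valid on the annulus $A_{r,R}(x_0)$ (from Lemma~\ref{W-lemma} and the $\eta$-compactness Proposition~\ref{eta}, which apply since $A_{r,R}$ avoids the bad balls), with $|f^2-1|<\sqrt2 a$, so that on each arc $\partial\omega_\rho$ one has
\[
\frac12\int_{\partial\omega_\rho}|\nabla u_\eps|^2\,ds \ \ge\ \frac12\int_{\partial\omega_\rho} f^2|\partial_\tau\psi|^2\,ds \ \ge\ \frac{1-\sqrt2 a}{2}\int_{\partial\omega_\rho}|\partial_\tau\psi|^2\,ds,
\]
where $\partial_\tau$ is the tangential derivative along the arc. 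In polar coordinates $(\rho,\theta)$ centered at $x_0$ with $\theta$ measured from the tangent $\tau$, this is $\frac{1-\sqrt2 a}{2\rho}\int_{\theta_+(\rho)}^{\theta_-(\rho)}|\partial_\theta\psi|^2\,d\theta$.

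Next I would identify the \emph{prescribed total variation} of $\psi$ across the arc from the classification of $x_0$. In each of the three cases (light boojum, heavy boojum, boundary vortex) the formulas \eqref{phase_light}, \eqref{phase_heavy}, \eqref{phase_vtx} show that $\psi(\rho,\theta_-(\rho)) - \psi(\rho,\theta_+(\rho))$ equals $2(\alpha + n(x_0)\pi) + O(\rho)$, $2(-\alpha + n(x_0)\pi) + O(\rho)$, or $2\pi n(x_0) + O(\rho)$ respectively; in all cases this is $2\pi\big(n(x_0) - \tau(x_0)\tfrac{\alpha}{\pi}\big) + O(\rho)$ using $\tau(x_0)\in\{-1,0,1\}$. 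The $O(\rho)$ error comes from the variation of $\gamma$ along $\Gamma_R$ and the fact that $\theta_\pm(\rho)$ are only approximately $0$ and $\pi$; since $\gamma\in C^1$ and $\Gamma$ is $C^2$, this error is genuinely bounded by $c\rho$ with $c$ depending only on $g$ and $\partial\Omega$. By Cauchy--Schwarz (Jensen), on an interval of length $\theta_-(\rho)-\theta_+(\rho) = \pi + O(\rho)$,
\[
\int_{\theta_+(\rho)}^{\theta_-(\rho)}|\partial_\theta\psi|^2\,d\theta \ \ge\ \frac{\big(\psi(\rho,\theta_-(\rho)) - \psi(\rho,\theta_+(\rho))\big)^2}{\theta_-(\rho)-\theta_+(\rho)} \ \ge\ \frac{\big(2\pi(n(x_0)-\tau(x_0)\tfrac\alpha\pi) + O(\rho)\big)^2}{\pi + O(\rho)}.
\]

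Then I would integrate over $\rho\in(r,R)$: combining the above,
\[
\frac12\int_{A_{r,R}}|\nabla u_\eps|^2\,dx \ \ge\ \frac{1-\sqrt2 a}{2}\int_r^R \frac{1}{\rho}\cdot\frac{\big(2\pi(n(x_0)-\tau(x_0)\tfrac\alpha\pi) + O(\rho)\big)^2}{\pi + O(\rho)}\,d\rho.
\]
Expanding the integrand as $\frac{2\pi(n(x_0)-\tau(x_0)\frac\alpha\pi)^2}{\rho}(1 + O(\rho))$ and using $\int_r^R O(1)\,d\rho = O(R) = O(1)$, the cross terms and the $\sqrt2 a$ deficit contribute only a bounded constant (one must also absorb the factor $1-\sqrt2 a$; this is where I would either choose $a$ small enough at the outset, or — better, to keep the sharp constant $2\pi$ — note that since $\rho>\eps^s$ and the total energy is $O(|\ln\eps|)$, Corollary~\ref{easycor} forces $\int_{A_{r,R}}(|u|^2-1)^2$ to be $o(1)$-controlled, so one can replace $f^2$ by $1$ up to a bounded error rather than losing the constant multiplicatively). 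This yields \eqref{boojum_energy}.

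The main obstacle is bookkeeping the $O(\rho)$ errors rigorously and uniformly in $x_0$: one needs that the boundary arcs $\Gamma_R^\pm$ satisfy $\theta_+(\rho)=O(\rho)$, $\theta_-(\rho)=\pi+O(\rho)$ with a constant independent of $x_0$ (this follows from compactness and $C^2$-regularity of $\Gamma$, as already used in Lemma~\ref{upperbound}), and that the oscillation of $\gamma$ and of the single-valued part $\phi$ in the phase formulas are genuinely controlled so that the endpoint phase difference is $2\pi(n-\tau\frac\alpha\pi)+O(\rho)$ rather than merely $+o(1)$. A secondary technical point is justifying the Cauchy--Schwarz step when $f^2$ is not exactly $1$ and when $\psi$ is only $H^1$ on arcs — but this is standard since $u_\eps$ is smooth away from defects by elliptic regularity, so $\psi$ restricted to each arc is $C^1$ and the trace of $\psi$ at the endpoints on $\Gamma_R^\pm$ is well-defined and pinned by cases (I)/(II).
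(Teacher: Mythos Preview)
Your overall plan (reduce to arcs, Jensen/Cauchy--Schwarz for the phase variation, integrate in $\rho$) is close to the paper's argument, which instead expands $|\nabla\psi|^2$ directly around the explicit linear-in-$\theta$ piece and controls a remainder $\mathcal{E}$. Both organizations are equivalent in spirit and lead to the same key estimate---but that key estimate is exactly the step you have glossed over.

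The gap is in your claim that the endpoint phase difference equals $2\pi\big(n(x_0)-\tau(x_0)\tfrac{\alpha}{\pi}\big)+O(\rho)$. The formulas \eqref{phase_light}--\eqref{phase_vtx} include the unknown single-valued part $\phi$, so
\[
\psi(\rho,\theta_-)-\psi(\rho,\theta_+)=2\pi\Big(n-\tau\tfrac{\alpha}{\pi}\Big)+O(\rho)+\big[\phi(\rho,\theta_-)-\phi(\rho,\theta_+)\big],
\]
and Lemma~\ref{W-lemma} together with cases (I)/(II) only gives $|\phi(\rho,\theta_\pm)|\le C_\alpha\sqrt{W(u(\rho,\theta_\pm),g)}+O(\rho)$. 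Since the hypothesis is merely $W\le a^2$ on $\Gamma_{r,R}^\pm$, this error can be of order $a$, not $O(\rho)$. After Jensen and division by $\rho$, the cross term produces precisely
\[
\int_{\Gamma_{r,R}^+\cup\Gamma_{r,R}^-}\frac{\sqrt{W(u,g)}}{\rho}\,d\rho,
\]
which is \emph{not} obviously $O(1)$: a priori it could be as large as $a|\ln\eps|$. Bounding this integral is the main technical content of the proof. The paper does it by splitting at $\rho=\eps^\gamma$ with $\tfrac34 s<\gamma<s$: on $(r,\eps^\gamma)$ one uses Cauchy--Schwarz together with Corollary~\ref{easycor} (local control of $\Upsilon\int W\,ds$), and on $(\eps^\gamma,R)$ one uses the global bound $E_\eps(u_\eps)\le K|\ln\eps|$. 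This is also the reason the extra term $\tfrac12(|u|^2-1)^2$ is included in $W$ (see Remark~\ref{extra_term}): without it one cannot get the needed pointwise control of $\phi$ on $\Gamma^\pm$.

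Your handling of the factor $f^2$ via Corollary~\ref{easycor} is correct and matches the paper; the first option (absorb $(1-\sqrt2 a)$ multiplicatively) would lose the sharp constant $2\pi$, which is needed later.
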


It is well-known that for interior vortices $x_0\in\Omega$, the energy lower bound is given by
$$\frac12\int_{A_{r,R}}|\nabla u_\eps|^2\, dx
  \ge \pi \left(n(x_0)\right)^2\ln(\frac{R}{r}) -C. $$

\medskip

\begin{proof}[Proof of Proposition~\ref{elb}]
For simplicity, we drop the $\eps$ subscripts, and write $n:=n(x_0)$ and $\tau=\tau(x_0)$.
We may unify the polar coordinate representations \eqref{phase_light}, \eqref{phase_heavy}, and \eqref{phase_vtx} using boojum number, and write $u(\rho,\theta)=f(\rho,\theta)e^{i\psi(\rho,\theta)}$ in $A_{r,R}(x_0)$ with
\be\label{univ_phase}  \psi(\rho,\theta)=\gamma_0+ 2n\theta + \tau\alpha\left(1-{2\theta\over\pi}\right)
+\phi(\rho,\theta). \quad \theta_+(\rho)<\theta<\theta_-(\rho),
\ee
Thus, we have:
\begin{align}\nnn
|\nabla u_\eps|^2&\ge f^2|\nabla\psi|^2
\ge f^2\left|2\left(n-\tau\frac{\alpha}{\pi}\right)\nabla\theta+\nabla\phi\right|^2\\ \nnn
&=f^2\left[4\left(n-\tau\frac{\alpha}{\pi}\right)^2 |\nabla\theta|^2 +
4\left(n-\tau\frac{\alpha}{\pi}\right)\nabla\theta\cdot\nabla \phi+|\nabla\phi|^2\right]\\ \label{grad}
&=4\left(n-\tau\frac{\alpha}{\pi}\right)^2\frac1{\rho^2}+ \mathcal{E},
\end{align}
with remainder term,
$$  \mathcal{E}:= (f^2-1)4\left(n-\tau\frac{\alpha}{\pi}\right)^2\frac1{\rho^2}+ 4f^2\left(n-\tau\frac{\alpha}{\pi}\right)\frac1{\rho^2}\frac{\partial\phi}{\partial\theta} + f^2|\nabla\phi|^2.
$$

We claim that $\displaystyle \int_{A_{r,R}(x_0)} \mathcal{E}\, dx \ge C$, with constant $C$ independent of $\eps$ as long as $r>\eps^s$.  Assuming the claim for the moment, we obtain the desired lower bound, since then,
\begin{align*}\nnn
\frac12\int_{A_{r,R}}|\nabla u_\eps|^2\, dx&\ge
2\left(n-\tau\frac{\alpha}{\pi}\right)^2\int_{A_{r,R}}\frac1{\rho^2}\, dx + C\\ \nnn
&=2\left(n-\tau\frac{\alpha}{\pi}\right)^2\int_r^R\int_{\theta^+(\rho)}^{\theta^-(\rho)}\frac1{\rho}\, d\theta d\rho + C\\ \nnn
&=2\left(n-\tau\frac{\alpha}{\pi}\right)^2\int_r^R\frac1{\rho}[\theta^-(\rho)-\theta^+(\rho)]\, d\rho + C\\ \nnn
&=2\pi\left(n-\tau\frac{\alpha}{\pi}\right)^2\int_r^R\frac1{\rho}\, d\rho +C\\ 
&=2\pi\left(n-\tau\frac{\alpha}{\pi}\right)^2\ln(\frac{R}{r}) + C.
\end{align*}
It remains to verify the claim. We will start by showing that the first term in $\mathcal{E}$ has small integral. Using the upper bound on the energy from Lemma~\ref{upperbound}, and recalling  (see Section 4) $r>\eps^s$ with $\frac34 s<\gamma<s$, we have:
\begin{align}
\left|\int_{A_{r,R}}(1-f^2)\frac1{\rho^2}\, dx\right|&\le 
\left[   (\int_{B_R} (1-f^2)^2\, dx)(\int_{A_{r,R}}\frac1{\rho^4}\, dx) \right]^{\frac12}\\
&\le \left[C\eps^2|\ln\eps| (\frac1{r^2}-\frac1{R^2})\right]^{\frac12}\to 0
\end{align}
as $\eps\to 0$.

Next we show that we can bound the second term in $\mathcal{E}$ by the (positive) third term.  We write,
\beq\label{todo}\int_{A_{r,R}}f^2\frac1{\rho^2}\frac{\partial\phi}{\partial\theta}\, dx =
\int_{A_{r,R}}\frac{\partial\phi}{\partial\theta}\frac1{\rho^2}\, dx +
\int_{A_{r,R}}\frac{f^2-1}{\rho^2}\frac{\partial\phi}{\partial\theta}\, dx
\eeq
and estimate each term separately. For the first term on the right hand side,
we write:
\begin{align*}
\int_{A_{r,R}}\frac{\partial\phi}{\partial\theta}\frac1{\rho^2}\, dx&=
\int_r^R\int_{\theta^-(\rho)}^{\theta^+(\rho)}\frac{\partial\phi}{\partial\theta}\frac1{\rho}\, d\theta\ d\rho\\
&=\int_r^R[\phi(\rho,\theta^-(\rho))-\phi(\rho,\theta^+(\rho))]\, \frac{d\rho}{\rho}
\end{align*}
and therefore
$$\left|\int_{A_{r,R}}\frac{\partial\phi}{\partial\theta}\frac1{\rho^2}\, dx\right|\le C
\left(\int_{\Gamma_{r,R}^{+}(x_0)}|\phi|\frac{d\rho}{\rho}+
\int_{\Gamma_{r,R}^{-}(x_0)}|\phi|\frac{d\rho}{\rho}\right).$$

To continue we require the estimates in Lemma~\ref{W-lemma}.  Note that the intervals $\Ipm_\pm$ for the phase $\psi$ may be defined modulo $2\pi$, and the fact that $n=n(x_0)$ is the degree of the defect implies that $\psi(x)-2\pi n\in \Ipm_-$ or $\Ipm_+$ for all $x\in \Gamma^\pm_{r,R}$.  For concreteness, let's assume 
$\psi-2\pi n\in \Ipm_-$ on $\Gamma^-_{r,R}$;  all other cases may be handled in the same way.  From  Lemma~\ref{W-lemma} (with the above observation) we may then conclude that on $\Gamma_{r,R}^-(x_0)$  it holds:
\begin{align*}
C_\alpha\sqrt{W(u,\rho)}>&|\psi-\gamma-\alpha-2n\pi|\\
&=|\gamma_0-\alpha+2(\frac{\alpha}{\pi} + n)\theta^-(\rho)+\phi(\rho,\theta)-\gamma-\alpha-2n\pi|\\
&=|(\gamma_0-\gamma) + 2\alpha\left(\frac{\theta^-(\rho)}{\pi}-1\right) + 2n(\theta^-(\rho)-\pi)+\phi(\rho,\theta)|\\
&\ge |\phi(\rho,\theta)|-|\gamma_0-\gamma|-\frac{2\alpha}{\pi}|\theta^-(\rho)-\pi|-2n|\theta^-(\rho)-\pi|\\
&\ge |\phi|-C\rho.
\end{align*}
Therefore on $\Gamma_{r,R}^-(x_0)$
$$|\phi|\le C_\alpha\sqrt{W(u,g)}+O(\rho),$$
and similarly
on $\Gamma_{r,R}^+(x_0)$.

In consequence we have
\be\label{control}
\int_{\Gamma_{r,R}^{+}(x_0)\cup \Gamma_{r,R}^{-}(x_0)}|\phi|\frac{d\rho}{\rho}\le \int_{\Gamma_{r,R}^{+}(x_0)\cup \Gamma_{r,R}^{-}(x_0)} C_\alpha\frac1{\rho}\sqrt{W(u,g)}\, d\rho + O(1)
\ee

We split $\Gamma_{r,R}^{\pm}(x_0)$ in two parts: 
$$\Gamma_{r,R}^{\pm}(x_0)=\Gamma_{r,\eps^\gamma}^{\pm}(x_0)\cup \Gamma_{\eps^\gamma,R}^{\pm}(x_0),$$
and using the Corollary~\ref{easycor} we estimate:
\begin{align*}
& \int_{\Gamma_{r,\eps^\gamma}^{+}(x_0)\cup \Gamma_{r,\eps^\gamma}^{-}(x_0)} \frac1{\rho}\sqrt{W(u,g)}\, d\rho\\
&\qquad \le
\left[\left(\int_{\Gamma_{r,\eps^\gamma}^{+}(x_0)\cup \Gamma_{r,\eps^\gamma}^{-}(x_0)} 
W(u,g)\, d\rho\right)\left(\int_{\Gamma_{r,\eps^\gamma}^{+}(x_0)\cup \Gamma_{r,\eps^\gamma}^{-}(x_0)} \frac{d\rho}{\rho^2}\right)\right]^{\frac12}\\
&\qquad\le \frac{c}{r^{\frac12}\Upsilon} =o(1),
\end{align*}
since $r>\eps^s=\Upsilon^{-1}$.
Furthermore, by the global upper bound on the energy $E_\eps(u_\eps)\le K|\ln\eps|$,
\begin{align*}
&\int_{\Gamma_{\eps^\gamma,R}^{+}(x_0)\cup \Gamma_{\eps^\gamma,R}^{-}(x_0)} \frac1{\rho}\sqrt{W(u,g)}\, d\rho \\
&\qquad\le
\left[\left(\int_{\Gamma_{\eps^\gamma,R}^{+}(x_0)\cup \Gamma_{\eps^\gamma,R}^{-}(x_0)} 
W(u,g)\, d\rho\right)\left(\int_{\Gamma_{\eps^\gamma,R}^{+}(x_0)\cup \Gamma_{\eps^\gamma,R}^{-}(x_0)} \frac{d\rho}{\rho^2}\right)\right]^{\frac12}\\
&\qquad
\le C\Upsilon^{-1}|\ln\eps| \eps^{-\gamma}\to 0,
\end{align*}
since $s-\gamma>0$.

We are left with estimating the second term in \eqref{todo}:
\begin{align}\nnn \allowdisplaybreaks
\left|\int_{A_{r,R}}\frac{f^2-1}{\rho^2}\frac{\partial\phi}{\partial\theta}\, dx\right|
&\le \frac1{\eps^s}\int_{A_{r,R}}|f^2-1| |\frac1{\rho}\frac{\partial\phi}{\partial\theta}|\, dx\\ \nnn
&\le \eps^{-s}\left[ \int_{A_{r,R}}|f^2-1|^2 \int_{A_{r,R}}|\nabla\phi|^2\right]^{\frac12}\\ \nnn
&\le \eps^{-s}\left[ K\eps^2|\ln\eps|\int_{A_{r,R}}|\nabla\phi|^2\right]^{\frac12}\\ \nnn
&\le C\eps^{1-s}\sqrt{|\ln\eps|}\left[\int_{A_{r,R}}|\nabla\phi|^2\right]^{\frac12}\\
\allowdisplaybreaks
\label{end}
&=o(1)(\int_{A_{r,R}}|\nabla\phi|^2)^{\frac12} + o(1)
\end{align}
Finally, the last term in $\mathcal{E}$ is bounded below,
$$  \int_{A_{r,R}} f^2|\nabla\phi|^2\, dx \ge (1-\sqrt{2}a)^2\int_{A_{r,R}} |\nabla\phi|^2\, dx,  $$
and hence this positive term controls \eqref{end}.  Putting all of these estimates together, we obtained the desired lower bound on the residual term, $\int_{A_{r,R}} \mathcal{E}\, dx \ge C$, and the desired lower bound is established.
\end{proof} 

\begin{remark}\rm \label{extra_term}
We note that it is in deriving the estimate \eqref{control}, on the energy contribution of the  ``excess phase'' $\phi$ to the energy of a boundary defect, where we need to introduce the boundary penalization of $(|u|^2-1)^2$ in $W(u,g)$.  (See \eqref{eq:RP2} and the following remarks there.)  In particular, while we know that $|u_\eps|^2\ge 1-\sqrt{2}a>0$ away from the bad balls, this is not a strong enough estimate to control the error term in \eqref{control} without introducing additional logarithmically growing terms.
\end{remark}

\medskip

Next we compare the energies of boundary boojums and boundary vortices.  First, we remark that, since the phase $\psi\simeq \gamma\pm\alpha$ away from the bad balls, and $0<\alpha<{\pi\over 2}$, light and heavy boojums must be paired and in fact must alternate as we trace out $\Gamma=\partial\Om$.  In addition, given the lower bound \eqref{boojum_energy} for annuli, we observe that the lower bound for the energy of a ``ground state'' boojum pair, with $(n,\tau)=(0,-1), (1,1)$, is smaller than that of a boundary vortex since
$$  C_\alpha:= \left({\alpha\over\pi}\right)^2 + \left( 1-{\alpha\over\pi}\right)^2 < 1.  $$
This suggests that boojum pairs will always be energetically preferred over boundary vortices.  We will indeed show this in the course of proving the main theorem, but the following fundamental lemma is suggestive of this fact (and will be instrumental in proving it). 

In the following lemma we will denote by $n_j^0$ the degree of a boundary vortex, and by $n_i^+$ the degree of a heavy boojum while $n_i^-$ will be that of a light boojum:

\begin{lemma}\label{newdeglem}
Assume $ \exists \, N^v, N^b\in\{0,1,2,\dots\}$ and integers $\left\{\no\right\}_{j=1, \dots, N^v}, \left\{n^+_i,  n^{-}_i\right\}_{i=1, \dots, N^b}$ with $\sum_{i=1}^{N^b} (n^+_i+ n^{-}_i) +\sum_{j=1}^{N^v} \no=D.$ Then

\beq\label{inf}  
\sum_{j=1}^{N^v} ({\no})^2 +
\sum_{i=1}^{N^b} \left[ (\nm+\frac{\alpha}{\pi})^2+ (\np-\frac{\alpha}{\pi})^2\right] \ge |D|  \, C_{\alpha}.
\eeq

Furthermore, in case that $D>0$, we have equality if and only if 
$$\no=0\ \forall j,\quad \np=1\ \forall i, \quad \text{and } \ \nm=0, i=1, \dots, D=N^b,$$ while if $D<0$, we have equality if and only if 
$$\no=0\ \forall j, \quad \np=0\ \forall i, \quad \text{and }\ \nm=-1, i=1, \dots, D=N^b$$
\end{lemma}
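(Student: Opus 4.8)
The plan is to prove the inequality \eqref{inf} by an elementary optimization argument over the integers, treating each term separately and then tracking the constraint. First I would observe that for the boundary vortices the contribution $\sum_j (\no)^2 \ge \sum_j |\no|$, with equality iff each $\no\in\{-1,0,1\}$; and more importantly that we can always ``replace'' a boundary vortex of degree $\no$ by the identity $(\no)^2 \ge C_\alpha |\no|$, valid since $C_\alpha<1\le |\no|$ whenever $\no\neq 0$ (and trivially when $\no=0$), with equality iff $\no=0$. Similarly, for each boojum pair I would want the scalar inequality
\beq\label{pairbound}
(m+\tfrac{\alpha}{\pi})^2 + (k-\tfrac{\alpha}{\pi})^2 \ge C_\alpha\,|k+m|
\eeq
for all integers $k,m$, with equality characterized. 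Granting these two pointwise facts, the lemma follows immediately from the triangle inequality:
\beq
\sum_j (\no)^2 + \sum_i\left[(\nm+\tfrac\alpha\pi)^2 + (\np-\tfrac\alpha\pi)^2\right] \ge C_\alpha\Big(\sum_j|\no| + \sum_i|\np+\nm|\Big) \ge C_\alpha\,\Big|\sum_j\no + \sum_i(\np+\nm)\Big| = C_\alpha|D|.
\eeq

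The heart of the matter is therefore the scalar inequality \eqref{pairbound} and its equality cases. I would prove it by a case analysis on the value of $N:=k+m\in\ZZ$. For fixed $N$, minimize $h(k):=(k-\tfrac\alpha\pi)^2 + (N-k+\tfrac\alpha\pi)^2$ over $k\in\ZZ$. This is a convex parabola in $k$ with real minimizer $k=\tfrac{N}{2}+\tfrac\alpha\pi$; since $0<\tfrac\alpha\pi<\tfrac12$, for each $N$ the integer minimizer(s) are easy to locate. For $N=0$ the minimizer is $k=0$ (as $\tfrac\alpha\pi<\tfrac12$), giving value $2(\tfrac\alpha\pi)^2 < C_\alpha = C_\alpha\cdot 0\,?$ — wait, for $N=0$ the right side is $0$ and the left side is strictly positive, so the inequality is strict and not an equality case. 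For $N=1$ the real minimizer $\tfrac12+\tfrac\alpha\pi\in(\tfrac12,1)$, so the integer minimizer is $k=1$ (since $\tfrac\alpha\pi<\tfrac12$ forces $\tfrac12+\tfrac\alpha\pi<1$, and $\tfrac12+\tfrac\alpha\pi>\tfrac12$), with value $(1-\tfrac\alpha\pi)^2+(\tfrac\alpha\pi)^2 = C_\alpha = C_\alpha\cdot|1|$: this is the equality case $(k,m)=(1,0)$. For $N=-1$, by the symmetry $(k,m)\mapsto(-m,-k)$ the minimizer is $(k,m)=(0,-1)$ with value $C_\alpha$, again equality. For $|N|\ge 2$ I would show the minimum value strictly exceeds $C_\alpha|N|$, e.g. by noting the minimum value is at least $\tfrac{N^2}{2}$ (the value at the real minimizer is $\tfrac{N^2}{2}$ plus a nonnegative term... actually the value at the real minimizer $k=\tfrac N2+\tfrac\alpha\pi$ is exactly $\tfrac{N^2}{2}$, so the integer minimum is $\ge\tfrac{N^2}{2}$), and $\tfrac{N^2}{2}\ge |N|$ with equality only at $|N|=2$ — so I must handle $|N|=2$ by hand: for $N=2$ the integer minimizer is $k=1$ (real minimizer $1+\tfrac\alpha\pi\in(1,\tfrac32)$), value $(1-\tfrac\alpha\pi)^2+(1+\tfrac\alpha\pi)^2 = 2 + 2(\tfrac\alpha\pi)^2 > 2 = C_\alpha\cdot|N|$ since $C_\alpha<1$; similarly $N=-2$. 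For $|N|\ge 3$, $\tfrac{N^2}{2}>|N|\ge C_\alpha|N|$ strictly.

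Once \eqref{pairbound} and the vortex inequality $(\no)^2\ge C_\alpha|\no|$ are in hand with their equality cases, I would assemble the equality analysis for the whole sum. Equality in \eqref{inf} forces: equality in each triangle-inequality step, so all the signed degrees $\no$ and all the pair-sums $\np+\nm$ have the same sign as $D$ (and those that are zero are consistent); equality in each vortex bound, so every $\no=0$; and equality in each \eqref{pairbound}, so every pair has $(\np,\nm)=(1,0)$ if that pair's contribution is counted with $D>0$, or $(0,-1)$ if $D<0$. If $D>0$ this gives $\np=1,\nm=0$ for all $i$, forcing $N^b=D$; if $D<0$ it gives $\np=0,\nm=-1$, forcing $N^b=-D=|D|$. (One should also rule out ``mixed'' configurations: if some pairs contribute $+1$ and others $-1$ the triangle inequality would be strict unless all contributions vanish, contradicting $D\neq0$; the case $D=0$ is excluded from the equality statement.) The main obstacle is not conceptual but bookkeeping: verifying the scalar inequality \eqref{pairbound} cleanly for \emph{all} integers including the delicate borderline $|N|=2$, and making sure the equality characterization for the full sum correctly combines the per-term equality cases with the sign constraints coming from the triangle inequality. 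I expect the $|N|=2$ case of \eqref{pairbound} to be the one place where $C_\alpha<1$ (rather than $C_\alpha\le 1$) is genuinely needed.
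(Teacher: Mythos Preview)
Your argument is correct and takes a genuinely different route from the paper's. The paper proceeds by induction on $|D|$: it assumes the bound fails, locates a positively charged defect (either a boundary vortex with $n_j^0\ge 1$ or a boojum pair with $n_i^++n_i^-\ge 1$), removes it, and applies the inductive hypothesis to the residual configuration of lower total degree; the key computational step is the inequality $(\nm+\tfrac\alpha\pi)^2+(\np-\tfrac\alpha\pi)^2\ge \nm+\np-1+C_\alpha$, which plays the same role as your \eqref{pairbound} but is phrased additively rather than multiplicatively. Your approach instead establishes the pointwise bounds $(\no)^2\ge C_\alpha|\no|$ and \eqref{pairbound} once and for all, then sums and applies the triangle inequality. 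This is more direct and makes the equality analysis cleaner, since it reduces to identifying the equality cases of two scalar inequalities plus the equality case of $\sum|a_k|\ge|\sum a_k|$. The paper's induction, by contrast, has to track the equality cases through the inductive step, which is slightly more delicate. One small point you should tighten in a final write-up: in the equality characterization you must explicitly exclude pairs with $\np+\nm=0$, since such a pair satisfies the triangle-inequality step with equality (it contributes zero to both sides) but violates \eqref{pairbound} strictly (its left side is $\ge 2(\alpha/\pi)^2>0$); you note the strictness for $N=0$ earlier but should invoke it again when assembling the equality cases.
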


\begin{remark}
We note that the minimum value is therefore obtain by $N^b$ boojum pairs of light and heavy boojums and with no boundary vortex.
\end{remark}

{\color{red} 
\begin{remark}  In proving Lemma~\ref{newdeglem} it is advantageous to list the different types of defects (light and heavy boojums and boundary vortices) separately, but for later purposes it will be more convenient to make a single list of the defects $y_{\eps,\ell}$, using the integer degree $n_\ell$ and boojum number $\tau_\ell\in \{0,-1,+1\}$ to distinguish their topological type.  In the latter case, the lower bound expressed in equation \eqref{inf} is reformulated as:
\beq\label{concis}\sum_{\ell=1}^{N^v+N^b}  (n_\ell-\tau_\ell \frac{\alpha}{\pi})^2 \ge |D|  \, C_{\alpha},
\eeq
 Recall that $\tau=0$ for a boundary vortex, $\tau=\pm1$ for a heavy, respectively light, boojum and that $N^v+N^b$ is the total number of boundary defects.
\end{remark}
}

\begin{proof}  Assume $D>0$.
We use induction on $D\in \NN$ and assume that the lower bound is false, i.e. that the inequality \eqref{inf} is reversed; for $D=1$ we then have:

$$\sum_{j=1}^{N^v} (n_j^0)^2 +
\sum_{i=1}^{N^b} \left[ (\nm +\frac{\alpha}{\pi})^2+(\np -\frac{\alpha}{\pi})^2\right] \le   C_{\alpha}.
$$

As we have a sum of positive terms,  this means:
$$\sum_{j=1}^{N^v} (n_j^0)^2\le  C_{\alpha}<1\implies n_j^0=0\ \forall j.$$
Further,
$$\sum_{i=1}^{N^b} \left[ (\nm+\frac{\alpha}{\pi})^2+( \np-\frac{\alpha}{\pi})^2\right] \le C_{\alpha},$$
which means that for each $i$ we have
$$\left[ (\nm+\frac{\alpha}{\pi})^2+( \np-\frac{\alpha}{\pi})^2\right] \le C_{\alpha}=(\frac{\alpha}{\pi})^2 + (1-\frac{\alpha}{\pi})^2<1,$$
and hence either
$$(\nm, \np)=(0,1) \text{ or } (0,0) \text{ or } (-1,0).$$
Since $D=1>0$, for at least one $i_0$ we have 
$$(n_{i_0}^-,  n_{i_0}^+)=(0,1),
\text{ and } (n_{i_0}^-+\frac{\alpha}{\pi})^2+( n_{i_0}^+-\frac{\alpha}{\pi})^2=C_{\alpha},$$
so 
$$\sum_{i\neq i_0} \left[ (n_i^++\frac{\alpha}{\pi})^2+( n_i^--\frac{\alpha}{\pi})^2\right] \le 0\implies N^b=1, i_0=1, (n_1^-, n_1^+)=(0,1).$$
This corresponds to the case of equality in \eqref{inf} with $D=1$, and hence we conclude that \eqref{inf} must hold in the case $D=1$.

Next, consider the case $D>1$ and assume 
\beq\label{assump}\sum_{j=1}^{N^v} (n_j^0)^2 +
\sum_{i=1}^{N^b} \left[ (\nm+\frac{\alpha}{\pi})^2+(\np-\frac{\alpha}{\pi})^2\right] \le  D\, C_{\alpha}.
\eeq
holds with D replaced by $D-m$, $m=1,2, \dots, D-1$, with equality if and only if $n_j^0=0\, \forall j, \nm=0\, \forall i, \text{and }\np=1, i=1, \dots, D-m=N^b$, and we will prove that this still holds for $D$, leading to the conclusion of the Lemma in the case that $D>0$.

Since $D>0$, there must be a positively charged defect somewhere. If $\exists \ n_j^0\ge 1$, we eliminate that boundary vortex to obtain a new configuration.
Indeed, assuming without loss of generality that $j=1$, we have a configuration
$$\left(  \left\{n_j^0\right\}_{j=2, \dots, N^v}, \left\{ \nm,\np\right\}_{i=1,\dots, N^b}\right),$$
with degree $D-n_1^0\le D-1$, and
\begin{align}\label{config}
 \sum_{j=2}^{N^v} (n_j^0)^2 +
\sum_{i=1}^{N^b} \left[ (\nm+\frac{\alpha}{\pi})^2+(\np-\frac{\alpha}{\pi})^2\right] & \le D  \, C_{\alpha}- (n_1^0)^2\\ \nonumber
&=(D-n_1^0)  C_\alpha + n_1^0  C_{\alpha} - (n_1^0)^2\\ \nonumber
&<(D-n_1^0)  C_{\alpha},
\end{align}
since $n_1^0\ge 1$ by assumption.

By induction hypothesis, we conclude that 
$$n_j^0=0\ \forall j=2, 3,\dots, N^v, N^b=D-n_1^0, \nm=0\ \forall i, \np=1\ \forall i=1, \dots, D-n_1^0.$$
This means that the left hand side in \eqref{config} becomes:
$$\sum_{j=2}^{N^v} (n_j^0)^2 +
\sum_{i=1}^{N^b} \left[ (\nm+\frac{\alpha}{\pi})^2+(\np-\frac{\alpha}{\pi})^2\right]=(D-n_1^0)  C_\alpha,$$
which is a contradiction, so this case cannot occur.

Thus, it must be that $\exists i$, which without loss of generality we will take to be $1$, with $n_1^-+ n_1^+\ge 1$. Moreover, since for $x<y$,
$$(x+\frac{\alpha}{\pi})^2 + (y-\frac{\alpha}{\pi})^2<(y+\frac{\alpha}{\pi})^2 + (x-\frac{\alpha}{\pi})^2,$$
we may assume that $ n^+_1\ge n^-_1, n^+_1\ge 1$  since otherwise we would switch $ n^+_1$ and $n^-_1$ and obtain a smaller value in \eqref{assump}.

Note that 
\begin{align}\nnn
(\nm+\frac{\alpha}{\pi})^2+(\np-\frac{\alpha}{\pi})^2&=
(\nm+\frac{\alpha}{\pi})^2+(\np-1+(1-\frac{\alpha}{\pi}))^2\\
&\ge (\np+\frac{\alpha^2}{\pi^2})+(\nm-1+(1-\frac{\alpha}{\pi})^2)
=\nm+\np -1 +C_{\alpha} \label{useful}
\end{align}
so  eliminating this boojum pair to create a new configuration with degree
$\tilde D=D-(\nm+\np)<D$, we have from \eqref{assump}
\begin{align*}
\sum_{j=1}^{N^v} (n_j^0)^2 +
\sum_{i=2}^{N^b} \left[ (\nm+\frac{\alpha}{\pi})^2+(\np-\frac{\alpha}{\pi})^2\right] & \le  C_{\alpha} D -[n^-_1+n^+_1 -1 +C_{\alpha}]\\
&=\left\{C_{\alpha}(D-1)-[n^-_1+ n^+_1-1]\right\}\\
&\le C_{\alpha}  [(D-1)-(n^-_1+ n^+_1-1)]\\
&= \tilde D  C_{\alpha}
\end{align*}
with equality if and only if $n^-_1=0,  n^+_1=1$. By the induction hypothesis, $\tilde D<D$, and
$$n_j^0=0\ \forall j, N^b=\tilde D + n^-_1 + n^+_1=D,n^-_i=0,  n^+_1=1, i=1, \dots, D.$$
This completes the proof for $D>0$.

When $D<0$, we note that this reduces to the postive case if we replace $n_j^0\to -n_j^0$, $n_j^+\to -n_j^-$, and $n_j^-\to -n_j^+$, that is, negative degree heavy boojums are counted the same way as positive degree light boojums, and vice-versa.  The case $D=0$ is trivially true.

\end{proof}

\section{Proof of the Main Theorem}\label{TheoremProof}

Returning to the bad balls constructed in Lemma~\ref{sballs}, we may pass to a subsequence $\eps_n\to 0$ for which there exist points $\xi_k\in\Om$, $k=1,\dots,N^v$, $\zeta_k\in \Gamma$, $k=1,\dots,N^b$, for which 
\be\label{limit_points}
\begin{gathered}  y_{\eps_n,j}\longrightarrow \zeta_k \quad \text{for some $k\in\{1,\dots,N^b\}$, and } \\
p_{\eps_n,i}\longrightarrow \xi_k \ \text{for some $k\in\{1,\dots,N^v\}$, or }\quad
p_{\eps_n,i}\longrightarrow \zeta_k \ \text{for some $k\in\{1,\dots,N^b\}$. }
\end{gathered}
\ee
That is, certain interior vortices $y_i$ for $u_\eps$ might accumulate at a boundary point $\zeta_k\in\Gamma$.  Our task is to provide a global lower bound on the energy to match the upper bounds from Lemma~\ref{upperbound}.  To do this we adapt techniques of vortex ball analysis introduced by Jerrard \cite{Jerrard} and Sandier \cite{Sandier}, but we must treat the various types of defect (boojums, boundary vortices, interior vortices, and interior vortices approaching the boundary) with care, as each leads to a different contribution to the energy.

For $\sigma>0$, define 
\be\label{OmSig} \mathcal{B}_\sigma:=\left(\bigcup_{k=1}^{N^v}B_\sigma(\xi_k)\right)\cup\left(\bigcup_{k=1}^{N^b}B_{\sigma^s}(\zeta_k)\right), \quad\text{and}\quad \Om_\sigma:=\Om\setminus\mathcal{B}_\sigma.
\ee

\begin{lemma}\label{Etienne}  Let $1>\sigma>0$ be fixed.  Then there exists $C=C(g,\alpha,\Om)$ such that:
$$  E_{\eps_n}(u_{\eps_n};\mathcal{B}_\sigma) \ge  2\pi\, sC_\alpha \, |\mathcal{D}|\, \ln\left({\sigma\over\eps}\right) +C,
\quad\text{if \ $sC_\alpha\le\frac12$},  $$
and
$$  E_{\eps_n}(u_{\eps_n};\mathcal{B}_\sigma) \ge  \pi \, |\mathcal{D}|\, \ln\left({\sigma\over\eps}\right) +C,
\quad\text{if \ $sC_\alpha>\frac12$},  $$
\end{lemma}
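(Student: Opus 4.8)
The plan is to combine three ingredients already established: the energy lower bound on a single annulus around each defect (Proposition~\ref{elb} for boundary defects, together with the classical interior-vortex bound stated just after it), the degree identity $\sum d_i+\sum n_j=\mathcal{D}$ from Lemma~\ref{degree_lemma}, and the arithmetic inequality \eqref{inf}/\eqref{concis} of Lemma~\ref{newdeglem}. The only genuinely new work is a Jerrard--Sandier vortex-ball growth argument to upgrade the fixed-scale annular bounds into bounds on the union of shrinking balls $\mathcal{B}_\sigma$, and to do this uniformly while keeping track of the different length scales $\eps$ (interior) and $\eps^s$ (boundary).

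First I would fix $\sigma>0$ and, for $n$ large, note that all the bad balls $B_{\kappa\eps}(p_{\eps_n,i})$ and $B_{\kappa\eps^s}(y_{\eps_n,j})$ from Lemma~\ref{sballs} lie inside $\mathcal{B}_\sigma$; each component of $\mathcal{B}_\sigma$ around an interior accumulation point $\xi_k$ is a disk $B_\sigma(\xi_k)$ containing some subcollection of interior bad balls, and each component around a boundary accumulation point $\zeta_k$ is a half-disk-like region $\omega_{\sigma^s}(\zeta_k)$ containing boundary bad balls (and possibly interior bad balls migrating to the boundary). For a \emph{single} bad ball one integrates the annular lower bound over a dyadic family of radii $r\in(\kappa\eps,\sigma)$ (resp.\ $(\kappa\eps^s,\sigma^s)$), obtaining $E_{\eps_n}(u_{\eps_n};B_\sigma(\xi_k))\ge \pi(\sum_{i\in I_k}d_i)^2\ln(\sigma/\eps)-C$ in the interior and $E_{\eps_n}(u_{\eps_n};\omega_{\sigma^s}(\zeta_k))\ge 2\pi(\sum n_\ell-\tau_\ell\tfrac{\alpha}{\pi})^2$-type sums times $\ln(\sigma^s/\eps^s)=s\ln(\sigma/\eps)$ on the boundary, via Proposition~\ref{elb}. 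When several bad balls sit in one component I would use the standard Jerrard--Sandier merging: grow disjoint annuli simultaneously until two collide, fuse them (the degree adds, and convexity $x\mapsto x^2$ gives $(d_1+d_2)^2\le\ldots$ — in fact one wants the \emph{subadditive-in-reverse} estimate, so one keeps the balls disjoint as long as possible and only pays a bounded cost at each merge), so that the accumulated logarithmic coefficient on each component is at least the square-sum of the degrees of the defects it contains, up to an additive constant absorbing the finitely many merges. This is the step I expect to be the main obstacle, because one must run this growth argument with two different speeds ($\eps$ versus $\eps^s$) and because an interior vortex that approaches $\Gamma$ must, once it enters the boundary layer at scale $\eps^s$, be re-accounted as contributing to the boundary-type bound; handling that crossover cleanly (essentially: its contribution $\pi d^2\ln(\sigma/\eps^s\cdot\text{something})$ only helps since $\pi d^2\ge 2\pi(d-\tau\alpha/\pi)^2$ is not automatic, so one argues $\pi d^2 s\ge$ the needed share, or absorbs the interior-to-boundary transition region into the constant) is the delicate bookkeeping.

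Once each component of $\mathcal{B}_\sigma$ carries a coefficient equal to (a constant times) the square-sum of its defect degrees, I would sum over all components. Writing the interior contributions as $\sum_{k=1}^{N^v_{\mathrm{int}}}\pi D_k^2\ln(\sigma/\eps)$ and the boundary ones as $2\pi s\,\big[\sum(n_\ell-\tau_\ell\tfrac\alpha\pi)^2\big]\ln(\sigma/\eps)$, and recalling $\ln(\sigma^s/\eps^s)=s\ln(\sigma/\eps)$, the total logarithmic coefficient is at least $\pi\sum D_k^2$ from interior defects plus $2\pi s\sum_\ell(n_\ell-\tau_\ell\tfrac\alpha\pi)^2$ from boundary defects. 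Now apply Lemma~\ref{newdeglem} in the form \eqref{concis} together with the degree identity of Lemma~\ref{degree_lemma}: the boundary square-sum is $\ge|\mathcal{D}_\Gamma|\,C_\alpha$ where $\mathcal{D}_\Gamma$ is the net boundary degree, while each interior $|D_k|\ge |D_k|$ trivially and $\sum|D_k|+|\mathcal{D}_\Gamma|\ge|\mathcal{D}|$ (in fact one can arrange signs so equality of net degrees holds). A short case analysis then shows $\pi\sum D_k^2 + 2\pi s C_\alpha|\mathcal{D}_\Gamma|\ge 2\pi sC_\alpha|\mathcal{D}|$ when $sC_\alpha\le\tfrac12$ (since $\pi\ge 2\pi sC_\alpha$, so interior defects are at least as expensive per unit degree as boundary ones) and $\ge\pi|\mathcal{D}|$ when $sC_\alpha>\tfrac12$ (now boundary defects are the cheaper ones but still cost $\ge 2\pi sC_\alpha>\pi$ per unit of $|\mathcal{D}_\Gamma|$, while $\pi\sum D_k^2\ge\pi\sum|D_k|$). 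Collecting the bounded errors from the merges, the a priori bounds of Lemma~\ref{apriori}, and the $\eta$-compactness constants of Proposition~\ref{eta} into the single constant $C=C(g,\alpha,\Om)$ yields exactly the two stated inequalities, which completes the proof of Lemma~\ref{Etienne}.
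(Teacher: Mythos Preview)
Your plan is essentially the paper's proof: a Jerrard--Sandier ball-growth argument, with Proposition~\ref{elb} supplying the annular lower bounds, Lemma~\ref{newdeglem} the arithmetic inequality, and Lemma~\ref{degree_lemma} the degree count, followed by the same case split on $\mu=\min\{2C_\alpha,1/s\}$.

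The one step you correctly flag as the main obstacle---running the growth at two speeds and handling interior balls that merge into boundary balls---is resolved in the paper by a specific synchronization rule you may not have guessed: the expansion is governed by a single parameter $t$ via
\[
\frac{R_{b}(t)}{r_{b}}=\frac{t}{t_0}=\left(\frac{R_{i}(t)}{r_{i}}\right)^{s},
\]
so that boundary annuli contribute $\ln t$ while interior annuli contribute $\tfrac{1}{s}\ln t$ in \eqref{annulusbound}. When interior balls merge into a boundary ball, the new boundary ball is assigned radius $\sum R_{b}+\sum R_{i}^{\,s}$ and seed size $\sum r_{b}+\sum r_{i}^{\,s}$; since $s<1$ this still encloses all the old balls, and by the elementary observation \eqref{observation} the ratio $t$ is preserved, so the lower bound carries through the merger with no loss beyond an $O(1)$ constant. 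This device is what makes your ``delicate bookkeeping'' go through cleanly and avoids any ad hoc estimate of the crossover region.
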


Combined with Lemma~\ref{upperbound} we may then conclude that the energy is bounded above away from any neighborhood of the defects:

\begin{corollary}\label{EnergyBA}  For any $\sigma>0$, there exists $C$ such that:
$$  E_{\eps_n}(u_{\eps_n};\Om_\sigma) \le \begin{cases}  2\pi\, sC_\alpha \, |\mathcal{D}|\, |\ln\sigma| +C, &\text{if $sC_\alpha\le\frac12$}, \\
\pi \, |\mathcal{D}|\, |\, |\ln\sigma| +C, &\text{if \ $sC_\alpha>\frac12$}.
\end{cases}
$$
\end{corollary}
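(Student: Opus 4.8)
The plan is to obtain Corollary~\ref{EnergyBA} by a direct subtraction: bound the \emph{total} energy of the minimizer from above by Lemma~\ref{upperbound}, bound the energy it concentrates inside $\mathcal{B}_\sigma$ from below by Lemma~\ref{Etienne}, and take the difference. First I would record that, since each $u_{\eps_n}$ minimizes $E_{\eps_n}^{g,\alpha}$, Lemma~\ref{upperbound} gives
$$ E_{\eps_n}(u_{\eps_n}) \le \begin{cases} \pi\DDD\,|\ln\eps_n| + C_1, & sC_\alpha \ge \tfrac12,\\ 2\pi\DDD\, sC_\alpha\,|\ln\eps_n| + C_1, & sC_\alpha < \tfrac12. \end{cases} $$
Since the Ginzburg--Landau energy density and the boundary penalization density are nonnegative, and the energy is additive over the complementary sets $\mathcal{B}_\sigma\cap\overline\Om$ and $\Om_\sigma$ — the bulk integral splitting over $\Om\cap\mathcal{B}_\sigma$ versus $\Om_\sigma$, the surface integral over $\Gamma\cap\mathcal{B}_\sigma$ versus $\Gamma\setminus\mathcal{B}_\sigma$ — we have exactly $E_{\eps_n}(u_{\eps_n};\Om_\sigma) = E_{\eps_n}(u_{\eps_n}) - E_{\eps_n}(u_{\eps_n};\mathcal{B}_\sigma)$.

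Then I would insert the lower bound from Lemma~\ref{Etienne}. Writing $\ln(\sigma/\eps_n) = |\ln\eps_n| - |\ln\sigma|$, which holds since $0<\sigma<1$ and $0<\eps_n<1$, and using $\DDD = |\DDD| > 0$, the $|\ln\eps_n|$ contributions cancel identically, leaving
$$ E_{\eps_n}(u_{\eps_n};\Om_\sigma) \le \begin{cases} 2\pi\, sC_\alpha\, |\DDD|\, |\ln\sigma| + C, & sC_\alpha \le \tfrac12,\\ \pi\, |\DDD|\, |\ln\sigma| + C, & sC_\alpha > \tfrac12, \end{cases} $$
which is the assertion. At the threshold $sC_\alpha = \tfrac12$ the two expressions coincide ($2\pi sC_\alpha\DDD = \pi\DDD$), and at that value both choices of inequality in Lemma~\ref{upperbound} and in Lemma~\ref{Etienne} are available, so the two cases overlap consistently.

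There is essentially no obstacle here; all the analytic content already sits in Lemmas~\ref{upperbound} and \ref{Etienne}, and what remains is bookkeeping. The one point to state carefully is that for fixed $\sigma$ the inequality is asserted for $n$ large enough that $\eps_n<\sigma$, so that $\mathcal{B}_\sigma$ contains the vortex balls $B_{\kappa\eps_n}(p_{\eps_n,i})$ and $B_{\kappa\eps_n^s}(y_{\eps_n,j})$ of Lemma~\ref{sballs} (this inclusion is what Lemma~\ref{Etienne} relies on); the finitely many remaining $n$ are absorbed into the constant $C$. The decomposition above itself needs no such hypothesis, as it uses only nonnegativity and additivity of the energy over disjoint regions. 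Finally, the additive constants $C_1$ and $C(g,\alpha,\Om)$ from the two lemmas are consolidated into a single $C=C(g,\alpha,\Om)$.
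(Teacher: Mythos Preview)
Your proposal is correct and follows exactly the approach the paper indicates: the paper does not spell out a proof of the corollary but simply states that it follows by combining Lemma~\ref{Etienne} with Lemma~\ref{upperbound}, which is precisely the subtraction argument you carry out. Your additional care about the threshold case $sC_\alpha=\tfrac12$ and about taking $n$ large enough is appropriate bookkeeping that the paper leaves implicit.
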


\begin{proof}[Proof of Lemma~\ref{Etienne}]  We let $1>\sigma>0$ be given, but such that 
$$\sigma^s < 4\min\left\{ |\xi_i-\xi_j|, |\zeta_i-\zeta_j|, \text{dist}(\xi_i,\Gamma) \ 
   | \ i\neq j, \ i=1,\dots,N^v, \ j=1,\dots,N^b\right\},
   $$
so that the balls $B_\sigma(\xi_i),B_{\sigma^s}(\zeta_j)$ be well separated in $\overline{\Om}$.  We take $\eps\to 0$ along the subsequence employed in \eqref{limit_points} above.  
By taking a further subsequence we may assume that each of the centers of the bad balls constructed in Lemma~\ref{sballs} lies withing $\sigma$ of its limiting $\xi_i$ or $\zeta_j$.  We assume $\eps\to 0$ along this subsequence, but without explicit notation by subscripts.

First, we separate out the ``bad balls'' defined in Lemma~\ref{sballs} whose centers converge to interior points $\xi_j\in\Om$ (see \eqref{limit_points} above).  Along the subsequence, the total degree due to these, $D^{int}$ is constant, and applying the result of Sandier or Jerrard in a slightly smaller domain $\Om'\Subset\Om$, we have the  
lower bound for the energy in this collection of interior balls,
\be\label{LB_int}  E_\eps\left( u_\eps\, ; \, \bigcup_{k=1}^{N^v} B_\sigma(\xi_k)\right)
\ge \pi |D^{int}|\, \ln \left({\sigma\over\eps}\right) + c, 
\ee
for any fixed $\sigma>0$.

For the bad balls which accumulate on the boundary we employ the same procedure introduced in \cite{Sandier} to obtain lower bounds for the classical Ginzburg-Landau functional with Dirichlet boundary condition, adapted to deal with boundary defects.  We construct families of balls, $\mathcal{B}(t)$, $t\ge 1$, containing the bad balls of Lemma~\ref{sballs}, and growing in time.  Each ball $B^i(t)\in\mathcal{B}(t)$ carries a degree, a radius $R_i(t)$, and a ``seed size'' $r_i(t)$, which in some sense remembers the scale of the original ball ($O(\eps)$ or $O(\eps^s)$.)  The lower bound is derived through a two-step evolution. The first step is ``expansion'': to continuously grow the balls' radii, and use Proposition~\ref{elb} to estimate the energy in the annuli contained between the expanded balls and the original bad balls, in terms of the logarithm of the ratio of the radii.  When two or more balls come into contact, the second ``merger'' step combines balls together, and uses Lemma~\ref{newdeglem} to bound the energy in the resulting larger balls from below.  These two steps are repeated until the radii of the growing balls exceeds $\sigma$.

To do this we need to further separate the remaining balls into two classes, those whose centers lie on the boundary versus interior balls converging to the boundary.  Assume that $sC_\alpha\le\frac12$; the modifications required for the opposite case will be described at the end of the proof.

%
%
%

\noindent\underline{\sl Step 0: Initialization.} \
Set the initial time $t_0=1$ (for convenience), and begin the process with the remaining bad balls as defined in Lemma~\ref{sballs}, numbered in a single list, $B^i(t_0)$, $i=1,\dots,N_0$. 
Define two index sets $\mathcal{I}_b, \mathcal{I}'$ as follows: 
\begin{itemize}
\item
For $i\in  \mathcal{I}_b$, $B^i(t_0)$ are centered on the boundary $\Gamma$. Boundary balls have initial radii $r_i(t_0)=\kappa\eps^s$, and carry both a degree $n_i$ and a Boojum number $\tau_i$.  We let $D^0:=\sum_{i\in\mathcal{I}_b} n_i$, the total degree of defect balls centered on $\partial\Om$.
\item 
For $i\in  \mathcal{I}'$,  $B^i(t_0)$ lie in the interior (but accumulate on the boundary as $\eps\to 0$.)  These balls have degree $d_i$ and initial radii $r_i(t_0)=\kappa\eps$.  The total degree of defect balls approaching $\partial\Om$ will be denoted $D'$. 
\end{itemize}
We note that by Lemma~\ref{degree_lemma} we must have $D^{int}+D^0+D'=\mathcal{D}$, the total degree associated to the boundary function $g$.
We also define
$$   D^b:= D^0+D' =\mathcal{D}-D^{int},  $$
and note that $D^b$ is also constant in $\eps$.

The values $r_i(t_0)$ are initially chosen to be the actual radii of vortex balls, but following Sandier \cite{Sandier} we think of them as a ``seed size'', which will change in the process of merging the expanding balls but retain the order of magnitude of the original radii.

\medskip

\noindent \underline{\sl Step 1: Initial expansion.} \  We grow the radii of each ball continuously in time $t$, maintaining a uniform ratio of the radius of each ball to its initial radius.  We do this in a different way depending on the two classes of bad balls near the boundary.  We recall that the initial time $t_0=1$.
 If $B_1(t_0)$ is centered on the boundary $\Gamma$ and $B_2(t_0)$ is an interior ball converging to the boundary, we require that their radii $R_1(t), R_2(t)$ satisfy:
\be\label{ratio_radii}  {R_1(t)\over r_1(t_0)} =  {t\over t_0} =\left({R_2(t)\over r_2(t_0)}\right)^s.  
\ee
By (ii) of Lemma~\ref{sballs} we can increase $t$, expanding each ball for some positive time $t>t_0=1$, in which the balls will remain disjoint.  
During this expansion phase the seed sizes $r_i(t)=r_i(t_0)$ remain constant.
Call $t_1>t_0=1$ the first time at which two or more expanding balls touch.   Applying Proposition~\ref{elb} to each ball at time $1=t_0\le t\le t_1$, we obtain a lower bound in each annulus, of outer radius $R_i(t)$ and inner radius $r_i(t_0)$ around each center:
\begin{align}  \nnn
 E_\eps\left(u_\eps \, ; \ \bigcup_{i=1}^{N_0}B^i(t)\right)  
    &\ge E_\eps\left(u_\eps \, ; \  \bigcup_{i=1}^{N_0}(B^i(t)\setminus B^i(t_0))\right)
    \\  \nnn
    &\ge \sum_{i\in\mathcal{I}_b} 2\pi \left( n_i -\tau_i{\alpha\over\pi}\right)^2
    \ln{R_i(t)\over r_i(t_0)}
      + \sum_{i\in\mathcal{I}'} \pi d_i^2 \ln{R_i(t)\over r_i(t_0)}
  + c_1  \\
 &= \left\{\sum_{i\in\mathcal{I}_b} 2\pi \left( n_i -\tau_i{\alpha\over\pi}\right)^2  + \sum_{i\in\mathcal{I}'} {\pi\over s} \, d_i^2 
 \right\}\ln {t\over t_0} + c_1  \label{annulusbound}
\end{align}
Using Lemma~\ref{newdeglem}, (see also \eqref{concis}) we then obtain a lower bound on the energy in the expanded balls near $\partial\Om$,
\begin{align}
\nnn
E_\eps\left(u_\eps \, ; \ \bigcup_{i=1}^{N_0}B^i(t)\right)
   &\ge \left[ 2\pi C_\alpha \, |D^0| + {\pi\over s} |D'| \right]\ln {t\over t_0} + c_1 \\
   \label{LB_postexp}
   &\ge \pi\mu |D^b| \, \ln {t\over t_0} + c_1,
\end{align}
for $1=t_0<t\le t_1$, where we recall $D^b=D^0+D'$ and denote
\be\label{mu_def}   \mu=\min\left\{ 2C_\alpha \ , \ {1\over s}
\right\}.
\ee

\bigskip

\noindent \underline{\sl Step 2: Merging.} \  At time $t_1$, some of the expanding balls will come into contact with each other, in the sense that their closures will intersect.  The merging process is based on the observation:
\be\label{observation}
\text{if ${R_1\over r_1}=t={R_2\over r_2}$ then ${R_1+R_2\over r_1+r_2}=t$.}
\ee
Thus, we can combine balls whose closures touch into new balls by summing the radii, and the lower bound will be preserved if we adjust the ``seed size'', which remembers the radii of the initial balls, accordingly.  That is, the new denominator $\tilde r=r_1+r_2$ will no longer be the initial radius but will be a quantity of the same order of magnitude. 

If the closures of two or more interior balls $B^1(t_1),\dots,B^k(t_1)$ touch, (but remain disjoint from boundary balls,) they are enclosed within a new interior ball, $\tilde B^j(t_1)$, of radius $\tilde R_j(t_1):=R_1(t_1)+\cdots+R_k(t_1)$.  The degree of this new ball will be $\tilde d_j=\sum_{i=1}^k d_i$, and we will choose a new ``seed size'' $\tilde r_j(t_1):=r_1(t_0)+\cdots+ r_k(t_0)=O(\eps)$.  In this way, we are maintaining the ratio,
$$    \left[{\tilde R_j(t_1)\over \tilde r_j(t_1)}\right]^s = \left[{R_i(t_1)\over r_i(t_0)}\right]^s={t_1\over t_0}, \quad i=1,\dots, k.  $$
The energy contained in the new ball at $t=t_1$ may be bounded below, 
\begin{align} \nnn E_\eps\left(u_\eps \, ; \, \tilde B^j(t_1)\right)
   &\ge \sum_{i=1}^k  E_\eps\left(u_\eps \, ; \, B^i(t_1)\right)  \\
   \nnn
   &\ge \sum_{i=1}^k \pi d_i^2 \ln \left[{t_1\over t_0}\right]^{\frac1s} + O(1)   \\
   &\ge {\pi\over s} |\tilde d_j|\, \ln {t_1\over t_0} + O(1). \label{merge1}
   \end{align}

The case of two or more boundary balls $B^1(t_1),\dots,B^k(t_1)$ merging is only slightly more complicated.  As above, the new merged ball $\tilde B^j(t_1)$ will have radius $\tilde R_j(t_1)=\sum_{i=1}^k R_i(t_1)$, and new ``seed size'' $\tilde r_j(t_1):=r_1(t_0)+\cdots+ r_k(t_0)=O(\eps^s)$.
We recall that light and heavy boojums must alternate along $\Gamma$, and thus if we enclose two or more boundary balls in a larger $\tilde B^j(t_1)$, the boojum number of the merged ball $\tilde\tau_j=\sum_{i=1}^k \tau_i\in \{-1,0,1\}$.  Likewise, the degree also sums, $\tilde n_j=\sum_{i=1}^k n_i\in\ZZ$.  Thus, the new boundary ball's energy may be bounded below by:
\begin{align} \nnn E_\eps\left(u_\eps \, ; \, \tilde B^j(t_1)\right)
   &\ge \sum_{i=1}^k  E_\eps\left(u_\eps \, ; \, B^i(t_1)\right)  \\
   \label{merge2}
   &\ge \sum_{i=1}^k 2\pi \left(n_i-\tau_i{\alpha\over\pi}\right)^2 \ln \left[{t_1\over t_0}\right] + O(1). 
      \end{align}

The most delicate case is when interior defect balls collide with boundary balls.  (If interior balls contact $\partial\Om$ itself, we can think of this as the merger of interior balls with an empty boundary ball, of radius, degree, and boojum number all zero.)  Assume $k$ boundary balls and $\ell$ interior balls (with radii $R_i(t_1)$ and seed size $r_i(t_0)$) meet at $t=t_1$.  We create a new boundary ball $\tilde B^j(t_1)$ with radius and new seed size,
$$   \tilde R_j(t_1)=\sum_{i=1}^k R_i(t_1) + \sum_{i=k+1}^{k+\ell} [R_i(t_1)]^s, \qquad
\tilde r_j(t_1)=\sum_{i=1}^k r_i(t_0) + \sum_{i=k+1}^{k+\ell} [r_i(t_0)]^s =O(\eps^s).  $$
As $0<s<1$, $\tilde R_j(t_1)$ is larger than the sum of the radii of the old balls, and so $\tilde B^j(t_1)$ encloses each of the merging balls inside.
Employing the key observation \eqref{observation}, we obtain the same lower bound on the energy in the new boundary ball \eqref{merge2} as in the previous case.

Putting each case together, we have created a new family of defect balls $\{\tilde B^j(t_1)\}_{j=1,\dots,N_1}$, whose union contains the expanded balls $\bigcup_{i=1}^{N_0} B^i(t_1)$.  By the merging process, the closure of these balls is disjoint. We divide the balls into two classes via the index sets 
$\tilde{\mathcal{I}}_b$ and 
$\tilde{\mathcal{I}'}$ (separating the family into balls centered on the boundary versus those in the interior but approaching the boundary,) and denote by 
$$ \tilde D^0=\sum_{j\in \tilde{\mathcal{I}}_b} \tilde n_j\quad\text{ and }\quad
\tilde D'=\sum_{j\in \tilde{\mathcal{I}}'} \tilde d_j, $$ 
the total degrees.  Then, applying Proposition~\ref{newdeglem}
we have a lower bound:
\begin{align}  \nnn
E_\eps\left( u_\eps \, ; \, \bigcup_{j=1}^{N_1}\tilde B^j(t_1)\right)
&\ge \left[\sum_{j\in\tilde{\mathcal{I}}_b} 2\pi \left(n_i-\tau_i{\alpha\over\pi}\right)^2
   + \sum_{j\in\tilde{\mathcal{I}}'}{\pi\over s} |\tilde d_j|\right] \ln {t_1\over t_0} + O(1) \\  \nnn
  &\ge \left[ 2\pi C_\alpha\, |\tilde D^0| + {\pi\over s} |\tilde D'|\right] 
      \ln {t_1\over t_0} + O(1)
      \\  \label{merge_LB}
      &\ge \pi\mu |D^b| \,\ln {t_1\over t_0} + O(1),
\end{align}
in terms of the new merged defect balls.  (We note that, while $\tilde D^0, \tilde D'$ may be different from $D^0,D'$ because of merging, $D^b=D^0+D'=\tilde D^0+\tilde D'$.)

\medskip

\noindent \underline{\sl Step 3:  Repeat as necessary.} \ 
Restart the expansion process in Step 1, but starting now with the merged balls $\{\tilde B^j(t_1)\}_{j=1,\dots,N_1}$, whose closures are disjoint. Dropping the tildas, we expand the balls according to \eqref{ratio_radii} but for $t\ge t_1$, with the new seed sizes $r_j(t_1)$.  Again, expansion may continue until two or more expanded balls touch, at some $t_2>t_1$.  Applying Proposition~\ref{elb} in each annular region 
$B^j(t_2)\setminus\overline{B^j(t_1)}$, we obtain a lower bound analogous to \eqref{annulusbound}
\begin{align*}  
 E_\eps\left(u_\eps \, ; \ \bigcup_{i=1}^{N_1}\left[B^j(t)\setminus B^j(t_1)\right]\right)   
    &\ge \sum_{i\in\mathcal{I}_b} 2\pi \left( n_j -\tau_j{\alpha\over\pi}\right)^2
    \ln{R_j(t)\over r_j(t_1)}
      + \sum_{j\in\mathcal{I}'} \pi d_j^2 \ln{R_j(t)\over r_j(t_1)}
  + c_1  \\
 &= \left\{\sum_{j\in\mathcal{I}_b} 2\pi \left( n_j -\tau_j{\alpha\over\pi}\right)^2  + \sum_{i\in\mathcal{I}'} {\pi\over s} \, d_j^2 
 \right\}\ln {t\over t_1} + O(1) \\
 &\ge \left[ 2\pi C_\alpha \, |D^0| + {\pi\over s} |D'| \right]\ln {t\over t_1} + O(1)
 \\
 &\ge \pi\mu\, |D^b|\, \ln {t\over t_1} + O(1),
\end{align*}
with $\mu$ defined in \eqref{mu_def}.  Combining this with  \eqref{merge_LB} we have improved our lower bound to:
\begin{align*}  E_\eps\left(u_\eps \, ; \ \bigcup_{i=1}^{N_1}B^j(t)\right)
&\ge E_\eps\left(u_\eps \, ; \ \bigcup_{i=1}^{N_1}\left[B^j(t)\setminus B^j(t_1)\right]\right) + E_\eps\left(u_\eps \, ; \ \bigcup_{i=1}^{N_1} B^j(t_1)\right)  \\
& \ge \pi\mu |D^b| \, \ln {t\over t_0} + O(1),
\end{align*}
for all $t\in (t_1, t_2)$.  

This process must terminate after a bounded finite number of steps, as by Lemma~\ref{sballs} the number of bad balls is uniformly bounded in $\eps$.  After all the mergers are finished, there are only $N^b$ boundary balls remaining, each centered on $\Gamma$, converging to the points $\zeta_k\in\Gamma$, and the expansion step may continue without interruption until the sum of the radii $\sum_{j=1}^{N^b}R_j(t_*)= \sigma^s/2$.  Since the seed size $r_j(t)=O(\eps^s)$ for boundary centered balls, we obtain (for all sufficiently small $\eps$ in the subsequence),
\begin{align*}
E_\eps\left(u_\eps \, ; \ \bigcup_{j=1}^{N^b}B_{\sigma^s}(\zeta_j)\right)
&\ge E_\eps\left(u_\eps \, ; \ \bigcup_{i=1}^{N^b}B^j(t_*)\right) \\
&\ge \pi\mu |D^b| \, \ln {t_*\over t_0} + O(1) \\
&\ge  \pi\mu |D^b| \, \ln {\sigma^s\over \eps^s} + O(1) \\
&= s\pi\mu |D^b| \, \ln {\sigma\over \eps} + O(1)
\end{align*} 
For a lower bound on the energy contained in all of the balls, we include the lower bound \eqref{LB_int} on the energy of defect balls contained in the interior of $\Om$.  Thus, we have
$$  E_{\eps}(u_{\eps};\mathcal{B}_\sigma) \ge  \pi\left[\mu s \, |D^b| + |D^{int}|\right] \ln\left({\sigma\over\eps}\right) +C,
$$
where $\mathcal{B}_\sigma$ is defined in \eqref{OmSig}.
In case $\mu=1/s\le 2C_\alpha$, since $\mathcal{D}=|D^b+D^{int}|\le |D^b|+|D^{int}|$, we obtain the desired lower bound and the proof of the lemma is complete.  In case $\mu=2C_\alpha< 1/s$, we have $1>2sC_\alpha$ and a similar argument leads to the desired lower bound,
\begin{align} \nnn   E_\eps(u_{\eps};\mathcal{B}_\sigma) &\ge  \pi\left[2sC_\alpha \, |D^b| + |D^{int}|\right] \ln\left({\sigma\over\eps}\right) +C \\
\nnn
&\ge 2\pi s C_\alpha |\mathcal{D}|\, \ln{\sigma\over\eps} + c_2.  
\end{align}
\end{proof}

\begin{remark}
\rm   We note that when defining the collection of bad balls $\mathcal{B}_\sigma$ we may delete any balls (interior or boundary) for which the degree $\deg(u_\eps;\partial B_\eps(\xi))=0$ (or $\deg(u_\eps;\partial B_{\eps^s}(\xi))=0$ for boundary balls.)  Doing so does not change the lower bound on the energy contained in the bad set $\mathcal{B}_\sigma$, and thus any bad balls with net degree zero form part of the ``regular'' set where $u_\eps$ converges.
\end{remark}

\begin{proof}[Proof of Theorem~\ref{main}]
From Corollary~\ref{EnergyBA} we can choose a subsequence $u_\eps$ of minimizers which is bounded in $H^1\left(\Om\setminus\mathcal{B}_\sigma;\mathbb{C}\right)$, for any small $\sigma>0$, and for which the corresponding bad balls (from Lemma~\ref{sballs}) converge to the defect sites $\xi_i\in\Om$ or $\zeta_j\in\Gamma$.  By the upper bound in Corollary~\ref{EnergyBA} (extracting another subsequence, if necessary,) $u_\eps\wto u_*$ in $H^1_{loc}$ away from the defects $\xi_i\in \Om$, $\zeta_j\in\Gamma$, with $|u_*|=1$.  As in \cite{Struwe} the limiting $u_*$ is an $\mathbb{S}^1$-valued harmonic map with defects on the finite point set $\Sigma:=\{\xi_i,\zeta_j \ | \ i=1,\dots, N^v, \ j=1,\dots,N^b\}$.  Following \cite{BBH2, Riviere} the convergence may be improved to 
$H^1_{loc}(\Om\setminus\Sigma)$ and $C^{1,\beta}_{loc}(\bar\Om\setminus\Sigma)$.  
In particular, passing to the limit in \eqref{EL} on $\Gamma\setminus \{\zeta_j\}_{j=1,\dots,N^b}$, we may conclude that $W(u_*,g)=0$ away from the boundary defects. That is, $u_*=ge^{\pm i\alpha}$ on the boundary arcs determined by the defects on $\Gamma$.  We may also conclude that the degrees $d_i$ (corresponding to interior limiting defects $\xi_i$) and $n_j$ (corresponding to boundary limiting vortices $\zeta_j$, with boojum number $\tau_j$) are preserved in the limit.

We now fix 
$$R\le \frac14\min\left\{ |\xi_i-\xi_j|, |\zeta_i-\zeta_j|, \text{dist}(\xi_i,\Gamma) \ 
   | \ i\neq j, \ i=1,\dots,N^v, \ j=1,\dots,N^b\right\},
   $$
so that the balls of radius $R$ around each defect are pairwise disjoint.  For any $\sigma>0$ with  $\sigma^s<R$, we apply Proposition~\ref{elb} to $u_\eps$ in each annular region $A_{\sigma,R}(\xi_i)$, $A_{\sigma^s,R}(\zeta_j)$, to obtain a lower bound,
\begin{align*}  E_\eps(u_\eps; \Om_\sigma) &\ge
       E_\eps\left(u_\eps; \Om\setminus\left[\bigcup_i A_{\sigma,R}(\xi_i)\cup
         \bigcup_j A_{\sigma^s,R}(\zeta_j)\right]\right)  \\
         &\ge  \left[ \pi \sum_i  d_i^2 + 2 \pi \, s \sum_j \left(n_j -\tau_j{\alpha\over\pi}\right)^2\right] \ln{R\over \sigma} + O(1).
\end{align*}
From the upper bound in Corollary~\ref{EnergyBA} and Lemma~\ref{newdeglem} we then have:
\begin{align*}
\pi\mu \; s \mathcal{D}\, |\ln \sigma| &\ge E_\eps(u_\eps; \Om_\sigma) \\
   &\ge \left[ \pi \sum_i  d_i^2 + 2 \pi \, s \sum_j \left(n_j -\tau_j{\alpha\over\pi}\right)^2\right] \ln{R\over \sigma} + O(1)  \\
   &\ge \left[ \pi \sum_i |d_i| + 2 \pi C_\alpha s |D^b| \right] \ln{R\over \sigma} + O(1) \\
   &\ge \left[ \pi |D^{int}| + 2 \pi C_\alpha s |D^b| \right] \ln{R\over \sigma} + O(1),
\end{align*}
with equality if and only if $d_i=\text{sgn}\,D^{int}$, and $n_j=0$ for $\tau_j=0,1$ while $n_j=1$ for $\tau_j=-1$.   As this inequality is true for all $\sigma>0$, we must have 
\be\label{equality}  s\mu |\mathcal{D}| \ge \sum_i |d_i| + 2 C_\alpha s |D^b| \ge 
s\mu \left( |D^{int}| +  |D^b|\right) \ge
s\mu |\mathcal{D}|, 
\ee
and thus each term is equal.  

If we assume $\mu=2C_\alpha<1/s$ we must then conclude that $d_i=0$ $\forall i$, and $\mathcal{D}=D^b$, so all of the topologically nontrivial defects occur on the boundary, with degrees  $n_j=0$ for $\tau_j=0,1$ while $n_j=1$ for $\tau_j=-1$.  Consequently, there are exactly $2\mathcal{D}$ defects, alternating between light and heavy boojums.  On the other hand, if $\mu=1/s< 2C_\alpha$ then the equality of each term in \eqref{equality} forces $D^b=0$, and hence $\mathcal{D}=D^{int}$ and $d_i=1$ for all $i$.  In this case there are exactly $\mathcal{D}$ interior defects.  When $2C_\alpha s=1$ each defect (boundary or interior) has the same energy cost at highest order, and the question of characterizing the defects is more subtle.
\end{proof}

\section{Renormalized Energies}\label{RNE}

After proving Theorem~\ref{main}, which details the nature of defects under weak anchoring to oblique angle condition at the boundary, it is natural to ask whether we may determine the location of boojums in the $\eps\to 0$ limit.  This involves verifying (as in \cite{BBH2}) that the defect locations minimize a Renormalized Energy, determined by a more precise asymptotic expansion of the energy which identifies the order-one term.  Rather than carry out the necessary estimates as in \cite{BBH2}, we argue formally in this section in order to give the form of the Renormalized Energy and come to some heuristic conclusion in special geometries relevant to physical cases.  

As the case $2sC_\alpha>1$ is essentially the same as the Dirichlet case studied in \cite{BBH2}, we restrict our attention to the more novel situation $2sC_\alpha<1$ in which minimizers exhibit boojum pairs, which (as in the statement of Theorem~\ref{main},) we denote  by $y_j, \tilde y_j$, $j=1,\dots,\mathcal{D}$, with $y_j$ a light, and $\tilde y_j$ a heavy, boojum.  
We recall that (along a subsequence) $u_\eps\to u_*=\exp(i\varphi_*)$, in $C_{loc}^{1,\beta}(\overline{\Om}\setminus\{y_1,\tilde y_1,\dots,y_{\mathcal{D}}, \tilde y_{\mathcal{D}})$.  The limit  $u_*$ is an $\mathbb{S}^1$-valued harmonic map in $\Om$ with defects on the given boundary points.  In addition, $W(u_*,g)=0$ on $\Gamma \setminus\{y_1,\tilde y_1,\dots,y_{\mathcal{D}}\}$, and thus on $\Gamma$,
$u_*=g\exp(\pm i\alpha)$, with jumps of $-2\alpha$ or $-(2\pi-2\alpha)$ at the light and heavy boojums, $y_j, \tilde y_j$ respectively.
 These must alternate along $\Gamma$, and the heavy boojum carries a degree $n_j=1$ for each $j=1,\dots,\mathcal{D}$.  As described in \cite{BBH2, Riviere}, the energy of minimizers is then expanded as:
\be\label{asympexp}
E_\eps(u_\eps) =  \mathcal{D}(\pi \, s\, C_\alpha|\ln\eps|+Q_b)   + W(y_1,\tilde y_1,\dots,y_{\mathcal{D}}, \tilde y_{\mathcal{D}})  +o(1),
\ee 
where $Q_b$ is a constant, representing the energy of boojum cores at the length scale $\eps^s$.  This defines the Renormalized Energy,
 $W: \ \Gamma^{2\mathcal{D}}\to \R$, where $\Gamma^{2\mathcal{D}}$ is the set of all ${2\mathcal{D}}$ points on $\Gamma$.  To connect $W$ to the limit $u_*=\exp(i\varphi_*)$ of the minimizers $u_\eps$, we define the conjugate harmonic function to the phase $\varphi_*$: 
$\Phi(x)=\Phi(x; \{y_j,\tilde y_j\})$ with  $\nabla\Phi=-\nabla^\perp\varphi_*$.  
The conjugate solves
\be\label{Phi}
\left.
\begin{gathered}
\Delta\Phi=0, \quad\text{in $\Omega$,}\\
\frac{\partial\Phi}{\partial\nu}= g\wedge g_\tau - \sum_{j=1}^{\mathcal{D}} 
\left[2\alpha\,\delta_{y_j}(x)+2(\pi-\alpha)\delta_{\tilde y_j}(x) \right], \quad\text{on $\Gamma$}.
\end{gathered}
\right\}
\ee
 the boundary condition reflecting the jump in the harmonic phase $\varphi_*$ at light and heavy boojums.
 Then, it may be shown \cite{BBH2} that the Renormalized Energy is given by
\be\label{RNen}  W(y_1,\tilde y_1,\dots,y_{\mathcal{D}}, \tilde y_{\mathcal{D}}):=
   \lim_{\rho\to 0}\left(
     \frac12\int_{\Omega\setminus\mathcal{B}_\rho} 
     |\nabla\Phi(x; \{y_j,\tilde y_j\})|^2\, dx
        -\pi \, s\, C_\alpha\mathcal{D} \ln\frac{1}{\rho}\right),
\ee
where $\mathcal{B}_\rho=\bigcup_{j=1}^{\mathcal{D}} [B_\rho(y_j)\cup B_\rho(\tilde y_j)]$. 

In the special case where $\Om=B_1$, the unit disk, with equivariant $g=g(\theta)=e^{i\mathcal{D}\theta}$ of degree $\mathcal{D}>0$, the Renormalized Energy may be expressed simply.  In that case, $g\wedge g_\tau=\mathcal{D}$ is constant, so $\Phi$ is a linear combination of Green's functions $G(x,p)$ with pole $p\in\Gamma$: 
$$  -\Delta_x G(x,p) = 0, \ \text{in $\Omega$,} \quad
     \frac{\partial G}{\partial\nu_x}(x,p)=1-2\pi\delta_p(x), \ \text{for $x\in\Gamma=\partial B_1$,}
$$
whose solution is  $G(x,p)=2\ln |x-p|$.  Then, we have:
$$  \Phi(x;  \{y_j,\tilde y_j\})= \sum_{j=1}^{\mathcal{D}} 
  \left[ {\alpha\over\pi} G(x,y_j) 
      + \left(1-{\alpha\over\pi}\right) G(x, \tilde y_j)\right]. $$
Substituting into \eqref{RNen} and integrating by parts (see \cite[I.4]{BBH2} for interior vortices and \cite[Section 6]{ABGS} for boundary defects,) we obtain an explicit formula for the Renormalized Energy,
\begin{align*}  W(y_1,\tilde y_1, \dots, y_{\mathcal{D}},\tilde y_{\mathcal{D}})
&= \sum_{i,j=1\atop i\neq j}^{\mathcal{D}} 
  \left[ {\alpha\over\pi} G(y_i,y_j) 
      + \left(1-{\alpha\over\pi}\right) G(\tilde y_i, \tilde y_j)\right]
      - \sum_{i,j=1}^{\mathcal{D}} G(y_i,\tilde y_j) \\
  &=  -2\sum_{i,j=1\atop i\neq j}^{\mathcal{D}} 
  \left[ {\alpha\over\pi} \ln |y_i-y_j|
      + \left(1-{\alpha\over\pi}\right) \ln| \tilde y_i- \tilde y_j|\right]
      - 2\sum_{i,j=1}^{\mathcal{D}} \ln |y_i-\tilde y_j|.
\end{align*}
In the case $\mathcal{D}=1$ there is only one boojum pair and  
$W(y_1,\tilde y_1)=-4\ln |y_1-\tilde y_1|$, so it is clear that the light and heavy boojums must be antipodally placed on the circle $\Gamma$.  For the Landau-de Gennes case $\mathcal{D}=2$, different weights appear in the sum,
\begin{align*}  W(y_1, \tilde y_1, y_2, \tilde y_2)
&= -4\left[{\alpha\over\pi} \ln |y_1-y_2|
      + \left(1-{\alpha\over\pi}\right) \ln| \tilde y_1- \tilde y_2|\right] \\
      &\qquad
 - 2 \ln\left[|y_1-\tilde y_1|\,|y_1-\tilde y_2|\,|y_2-\tilde y_1|\,|y_2-\tilde y_2|\right],
\end{align*}

\section{Numerical Examples}
In this section we present several examples of configurations with vortices that can be observed in numerically computed critical points of the energy \eqref{eq:E}. These are obtained by simulating gradient flow for $E_{\eps}^{g,\alpha}$ using the finite elements software package COMSOL \cite{COMSOL}. 

Note that we do not claim that solutions that we obtain are minimizers of $E_{\eps}^{g,\alpha}$ or prove that these solutions converge to critical points of the limiting energy. Rather, we use numerical simulations as a useful tool that demonstrates that the behavior of computed solutions is similar to what is predicted by rigorous analysis discussed in the previous sections. 

In each of the examples below, we consider a circular domain $\Omega$ of radius $1$ centered at the origin and set $\varepsilon=0.02.$ We assume $\alpha=\frac{\pi}{3}$ so that $C_\alpha=\frac{5}{9}$ and consider two cases: $s=1$ and $s=0.72$ corresponding to a situation described in part (a) and (b) of Theorem \ref{main}, respectively.

\subsection{Boundary Data of Degree One}

Here we suppose that $g(x)=\frac{x}{|x|}$ on $\partial\Omega$ so that $\deg{g}=1$.   

First, let $s=1$. According to Theorem \ref{main}, the minimizers of $E_{\eps}^{g,\alpha}$ must converge to an $\mathbb{S}^1-$valued harmonic map with a single vortex in the interior of the domain $\Omega$. The numerically computed critical point of $E_{\eps}^{g,\alpha}$ exhibits this feature as is shown in Figure~\ref{fig:n1}.
\begin{figure}
\begin{center}
\includegraphics[scale=.56]{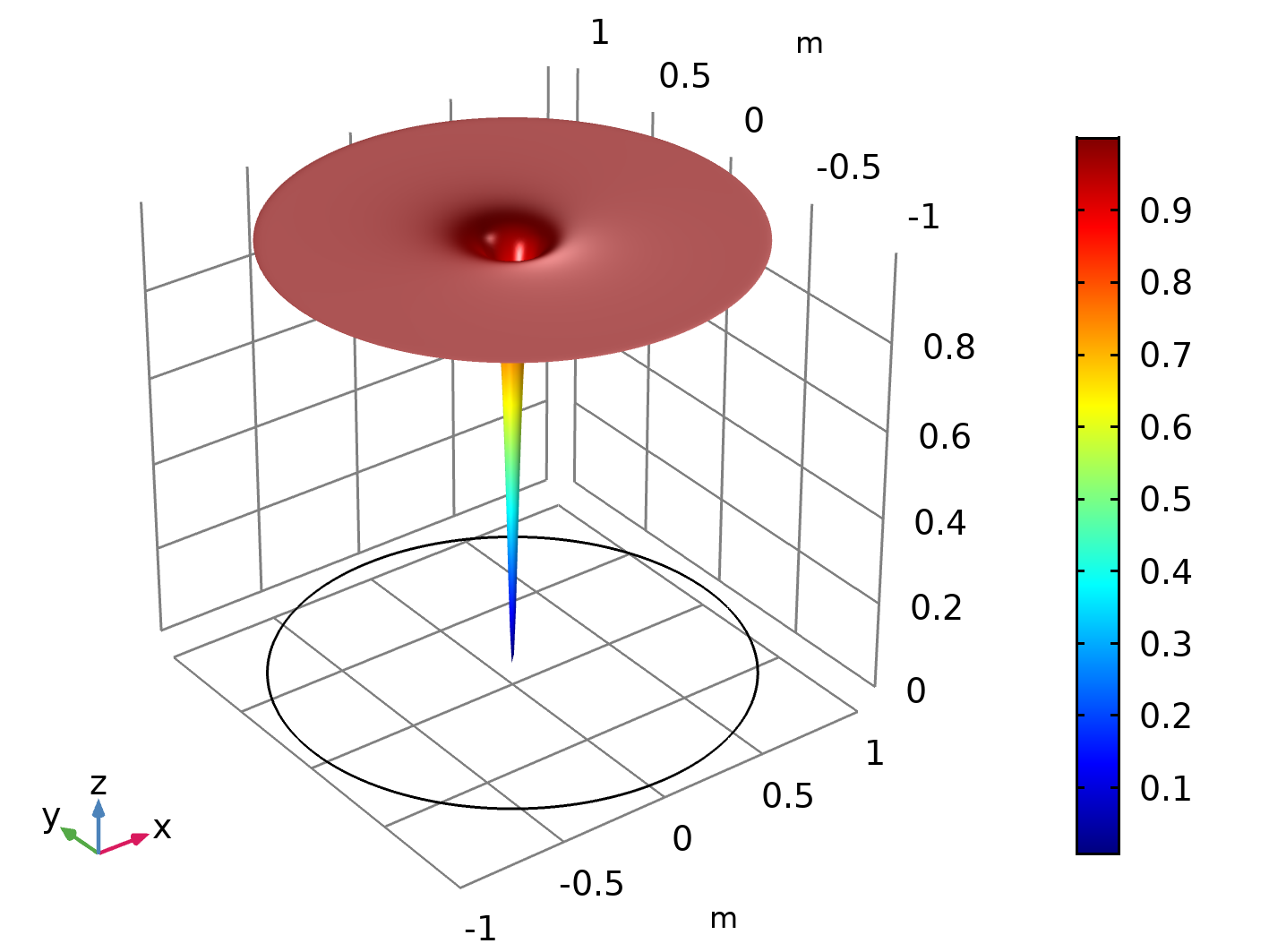}\qquad
\includegraphics[scale=.56]{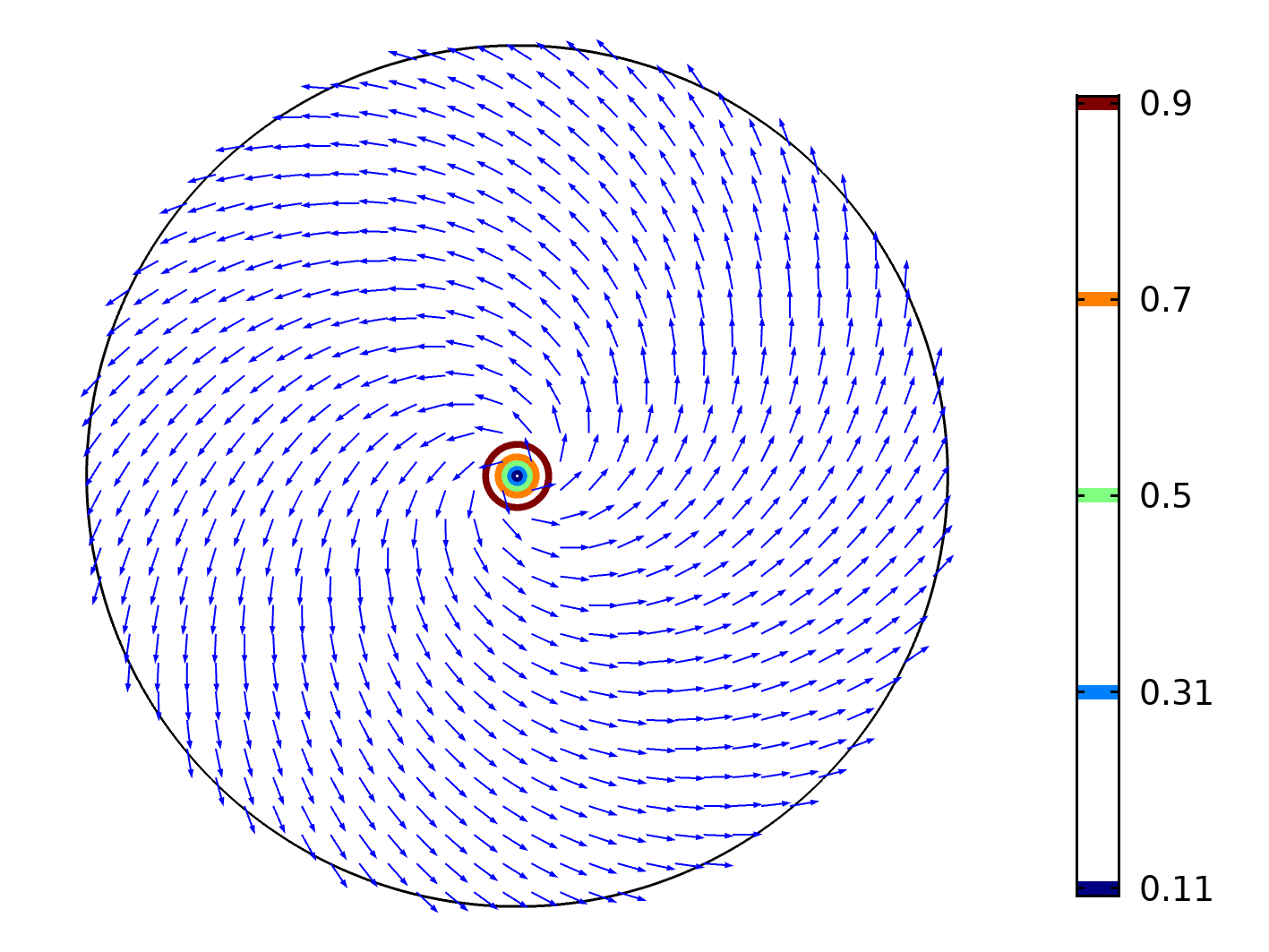} \\ \bigskip \includegraphics[scale=.56]{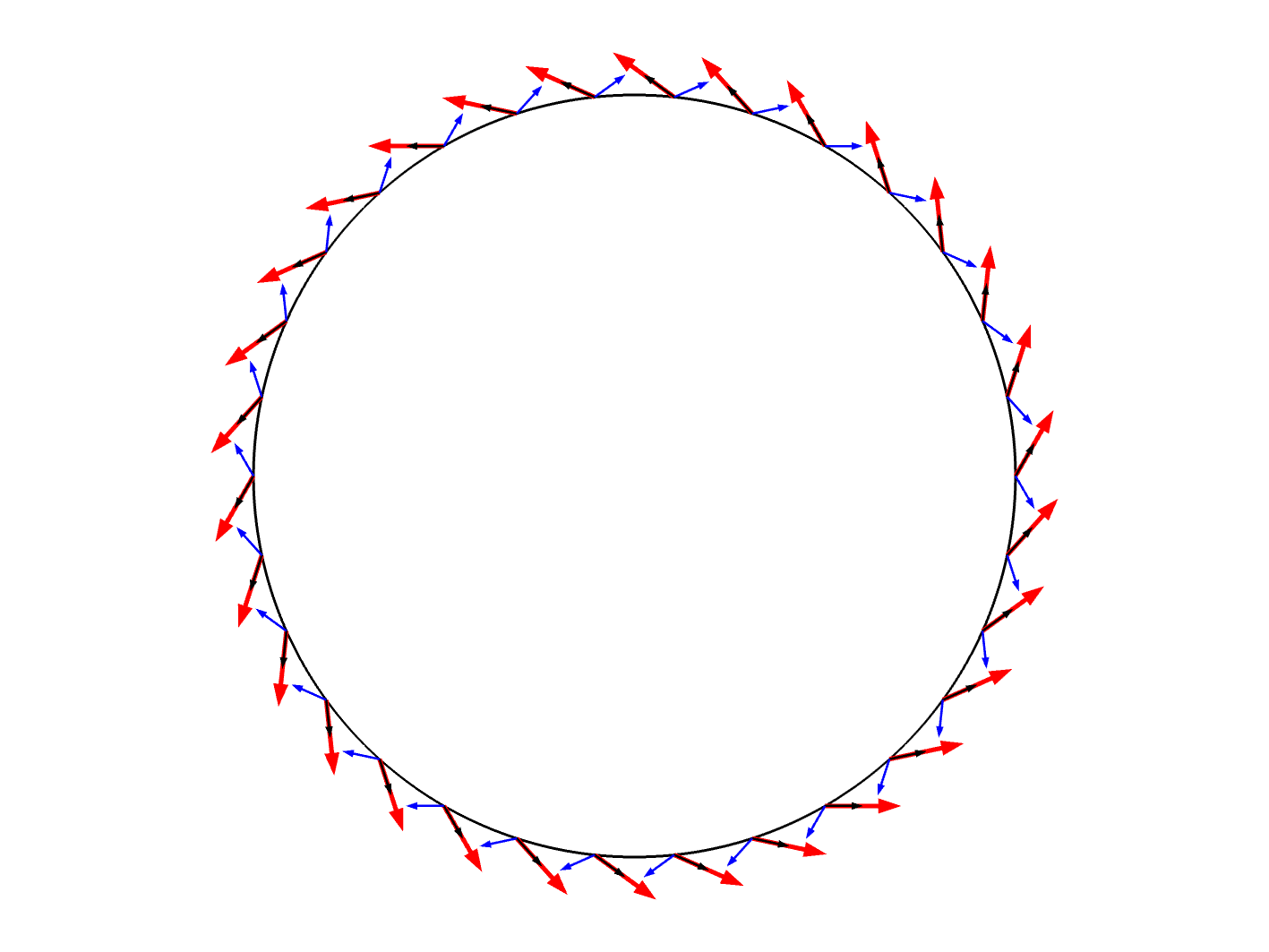}
\caption{A critical point of $E_{\eps}^{g,\alpha}$ for $g(x)=\frac{x}{|x|}$, $\alpha=\frac{\pi}{3}$, and $s=1$. Top left: The plot of $|u|$; Top right: The vector field $u$. Contour lines of $|u|$ are depicted to indicate the location of the singularity; Bottom: The restriction of the vector field $u$ to $\partial\Omega$. Here $u$ is shown in red, $e^{-i\pi/3}g$ and $e^{i\pi/3}g$ are shown in blue and black, respectively.}\label{fig:n1}
\end{center}
\end{figure}
Note that the absence of boundary singularities corresponds to $u$ being continuous on the boundary then $u$ must essentially coincide everywhere on $\partial\Omega$ with $e^{i\pi/3}g$ (or $e^{-i\pi/3}g$) in order to minimize the surface energy. 

Recall that in Section \ref{sec:nema} we established the relationship between $u$ and the nematic director $n$.  We can now use \eqref{eq:pn} to find the distribution of the director in $\Omega$. This distribution is depicted in Figure~\ref{fig:n2} and is characterized by the presence of a single disclination of degree $1/2$ at the origin.
\begin{figure}
\begin{center}
\includegraphics[scale=.56]{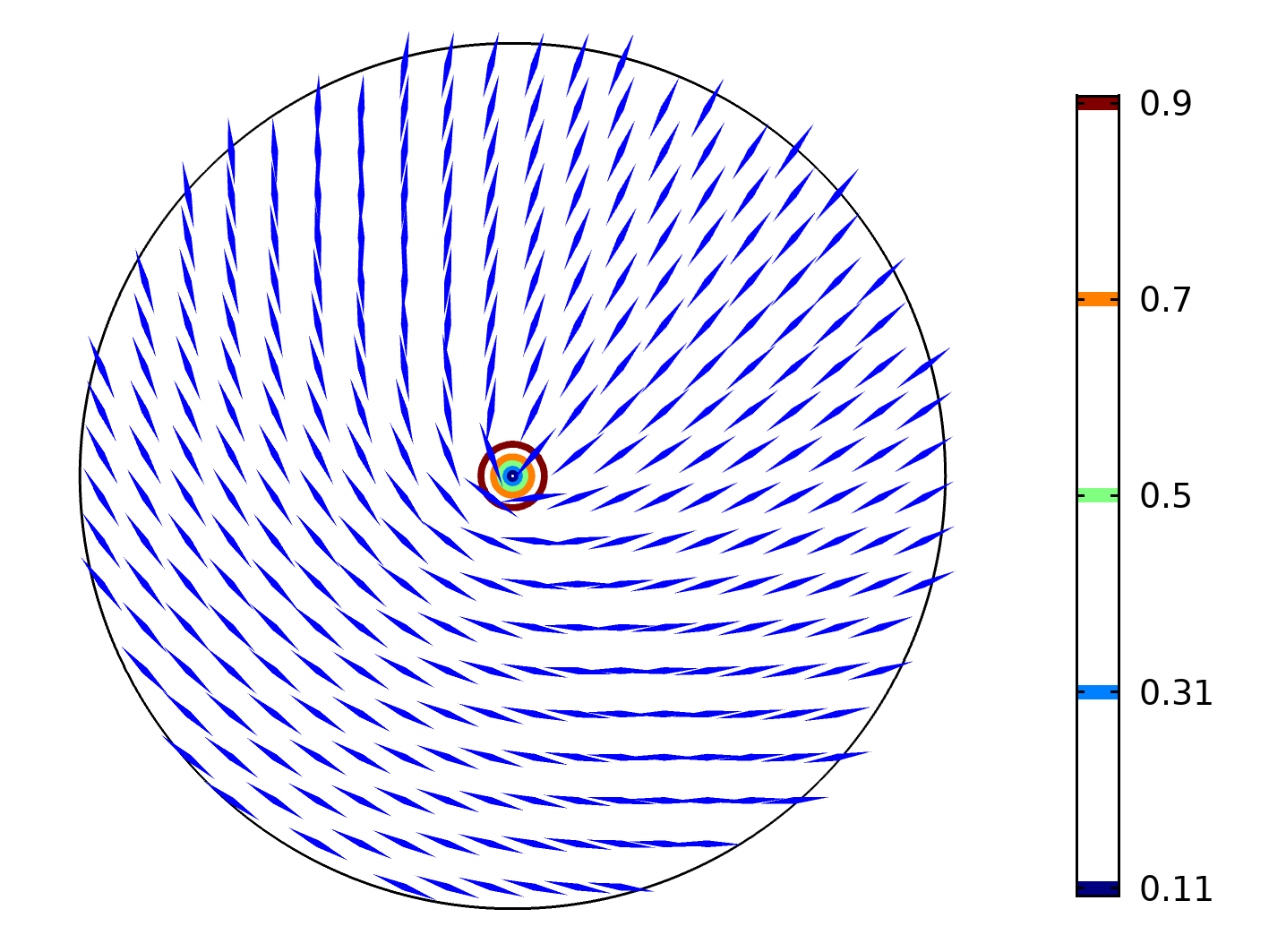}
\caption{The director field $n$ for $g(x)=\frac{x}{|x|}$, $\alpha=\frac{\pi}{3}$, and $s=1$. Contour lines of $|u|$ are shown to indicate the location of the singularity.}\label{fig:n2}
\end{center}
\end{figure}

Now, let $s=0.72$. Since $0.72<0.9=\frac{1}{2C_\alpha}$, from Theorem \ref{main} we expect that a numerically computed critical point of $E_{\eps}^{g,\alpha}$ should have one light and one heavy boojum on $\partial\Omega$. This is indeed the case for a critical point in Figure~\ref{fig:n3}.
\begin{figure}
\begin{center}
\includegraphics[scale=.56]{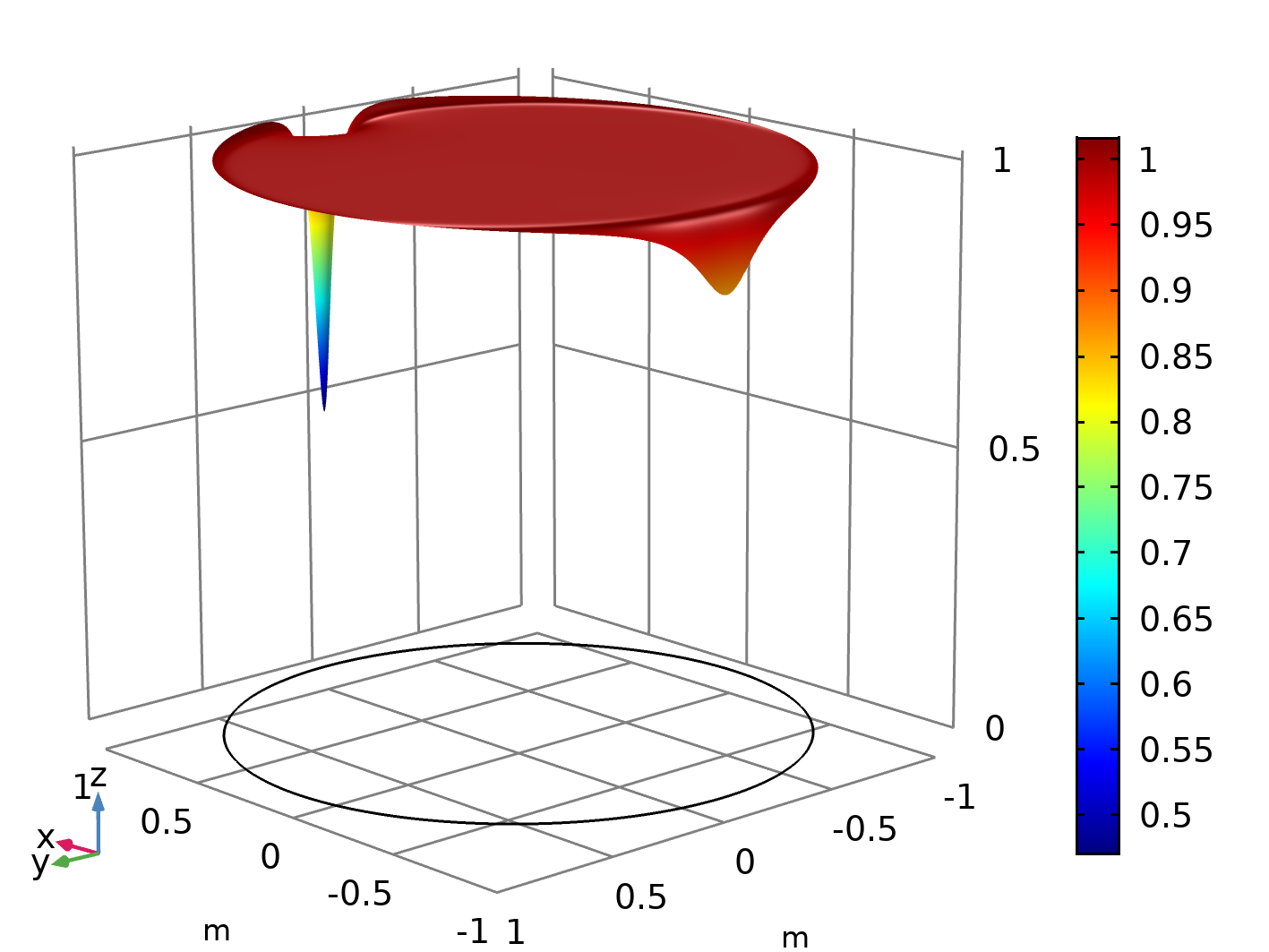}\qquad
\includegraphics[scale=.56]{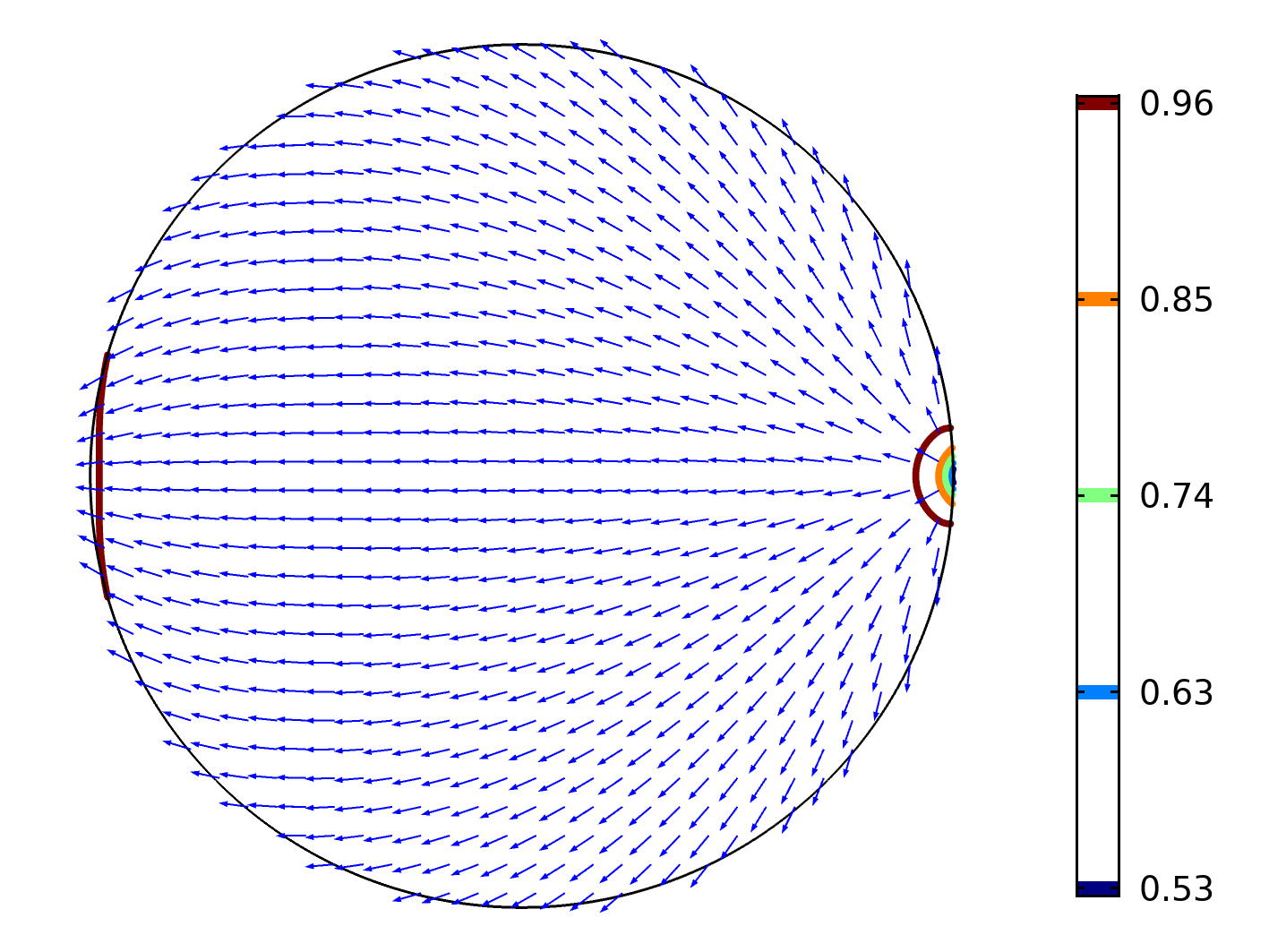} \\ \bigskip \includegraphics[scale=.56]{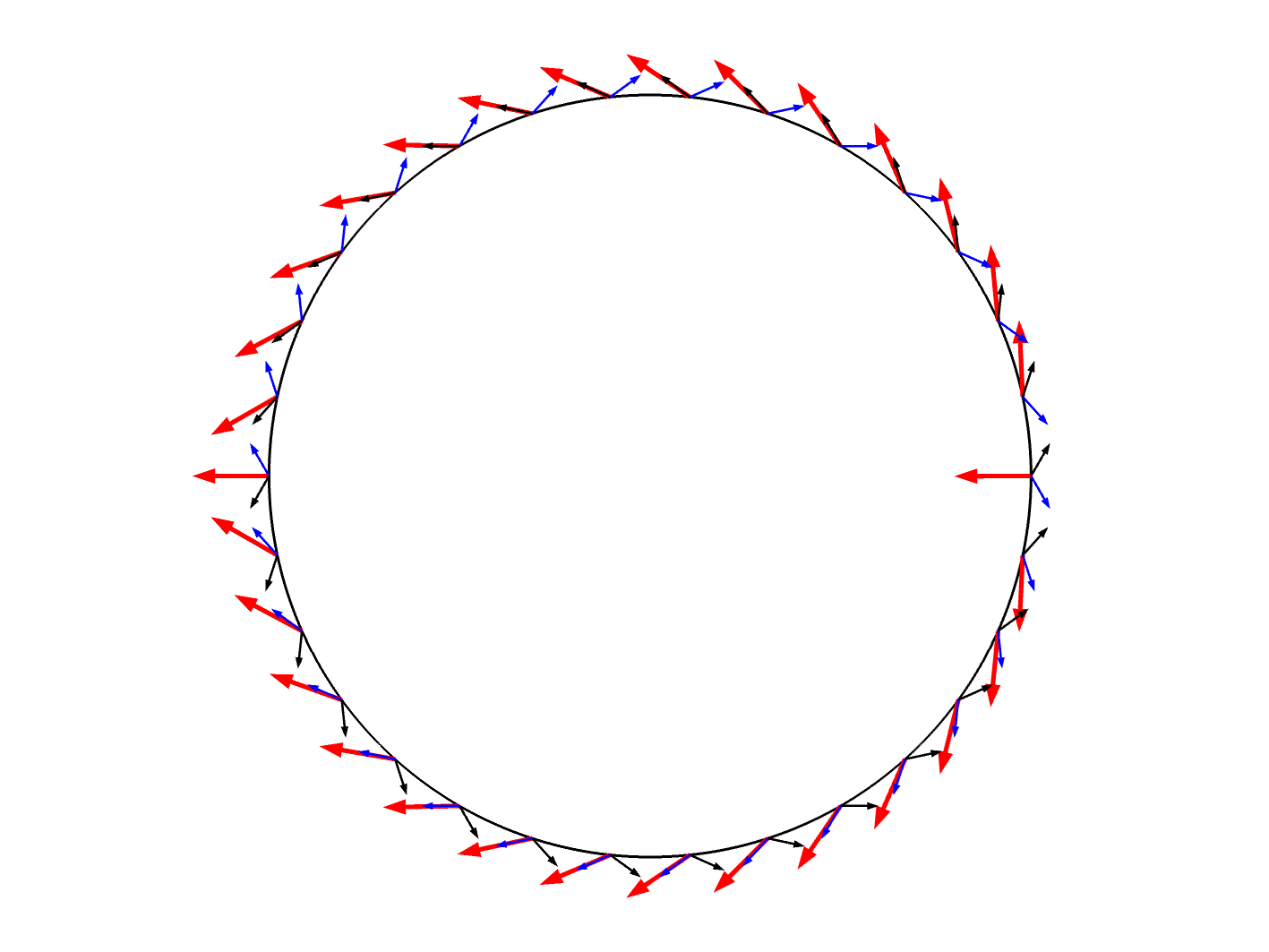}
\caption{A critical point of $E_{\eps}^{g,\alpha}$ for $g(x)=\frac{x}{|x|}$, $\alpha=\frac{\pi}{3}$, and $s=0.72$. Top left: The plot of $|u|$; Top right: The vector field $u$. Contour lines of $|u|$ are depicted to indicate the locations of the singularities; Bottom: The restriction of the vector field $u$ to $\partial\Omega$. Here $u$ is shown in red, $e^{-i\pi/3}g$ and $e^{i\pi/3}g$ are shown in blue and black, respectively.}\label{fig:n3}
\end{center}
\end{figure}
The light boojum corresponds to the shallower depression of $|u|$ on the boundary in the top left inset in Figure~\ref{fig:n3} and it is placed antipodally from the heavy boojum. Further, as the vector field is forced to switch its orientation with respect to $g$ on $\partial\Omega$ at boundary singularities, the bottom inset in Figure~\ref{fig:n3} demonstrates that $u$ "jumps" between $e^{-i\pi/3}g$ and $e^{i\pi/3}g$ as one traverses $\partial\Omega$. 

The distribution of the nematic director $n$ when $s=0.72$ is shown in Figure~\ref{fig:n4}.
\begin{figure}
\begin{center}
\includegraphics[scale=.56]{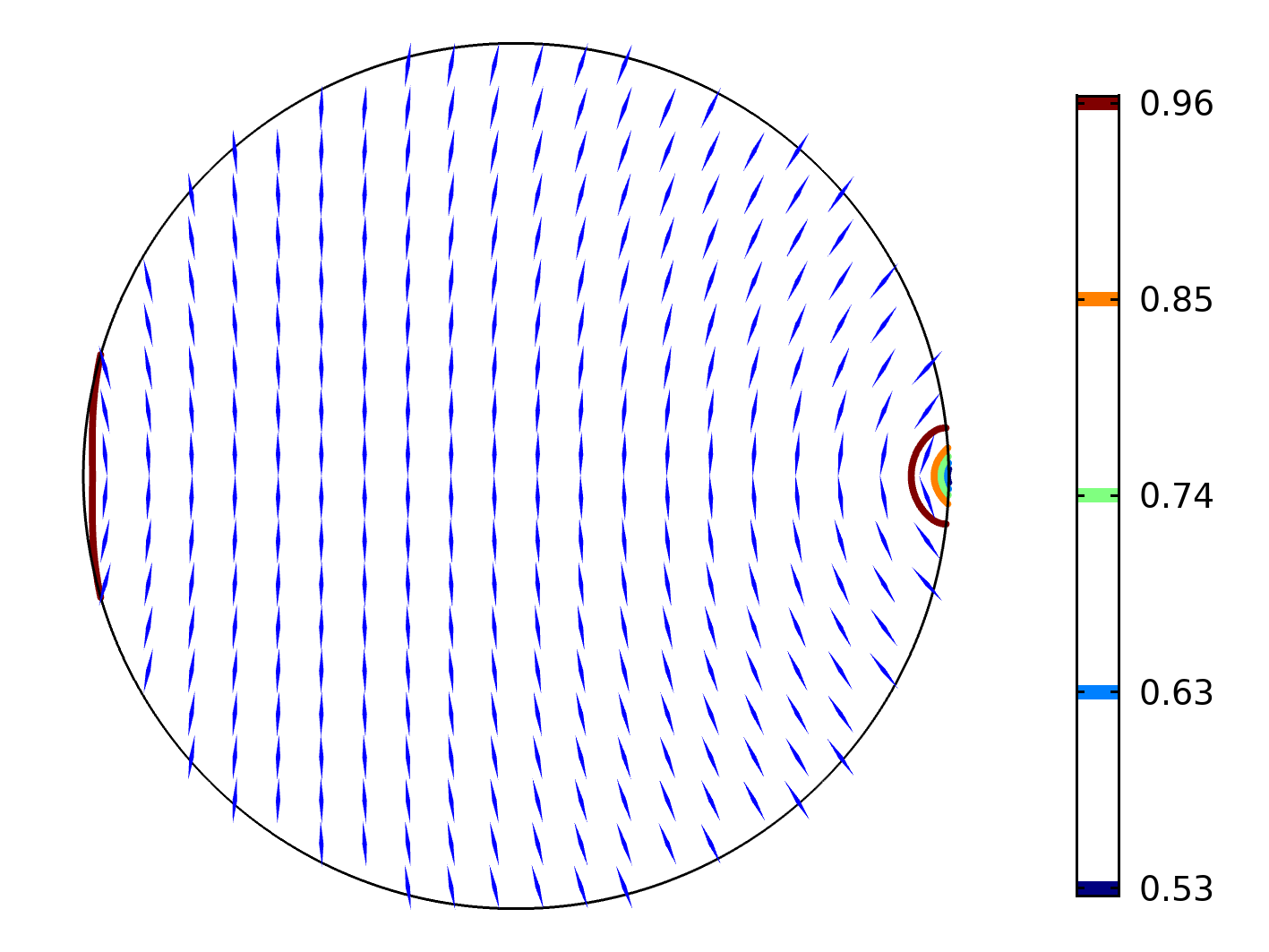}
\caption{The director field $n$ for $g(x)=\frac{x}{|x|}$, $\alpha=\frac{\pi}{3}$, and $s=0.72$. Contour lines of $|u|$ are shown to indicate the locations of the singularities.}\label{fig:n4}
\end{center}
\end{figure}

\subsection{Boundary Data of Degree Two}

Here we let $g(x)=\left(x_1^2-x_2^2,\,2x_1x_2\right)/|x|^2$ on $\partial\Omega$ so that $\deg{g}=2$.   

As expected, for $s=1$, the numerically computed critical point of $E_{\eps}^{g,\alpha}$ has two interior, degree one singularities as is shown in Figure~\ref{fig:n5}.
\begin{figure}
\begin{center}
\includegraphics[scale=.56]{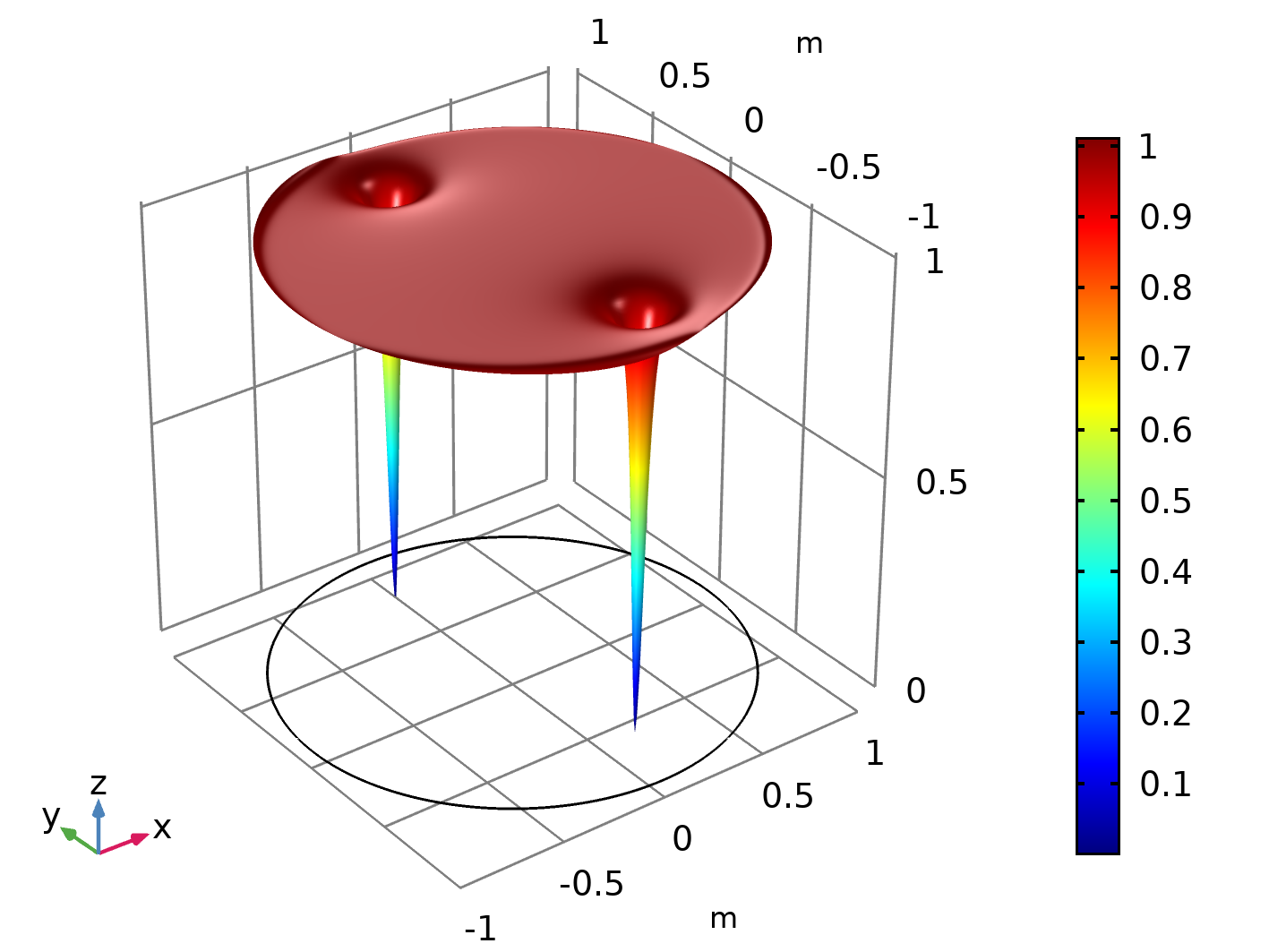}\qquad
\includegraphics[scale=.56]{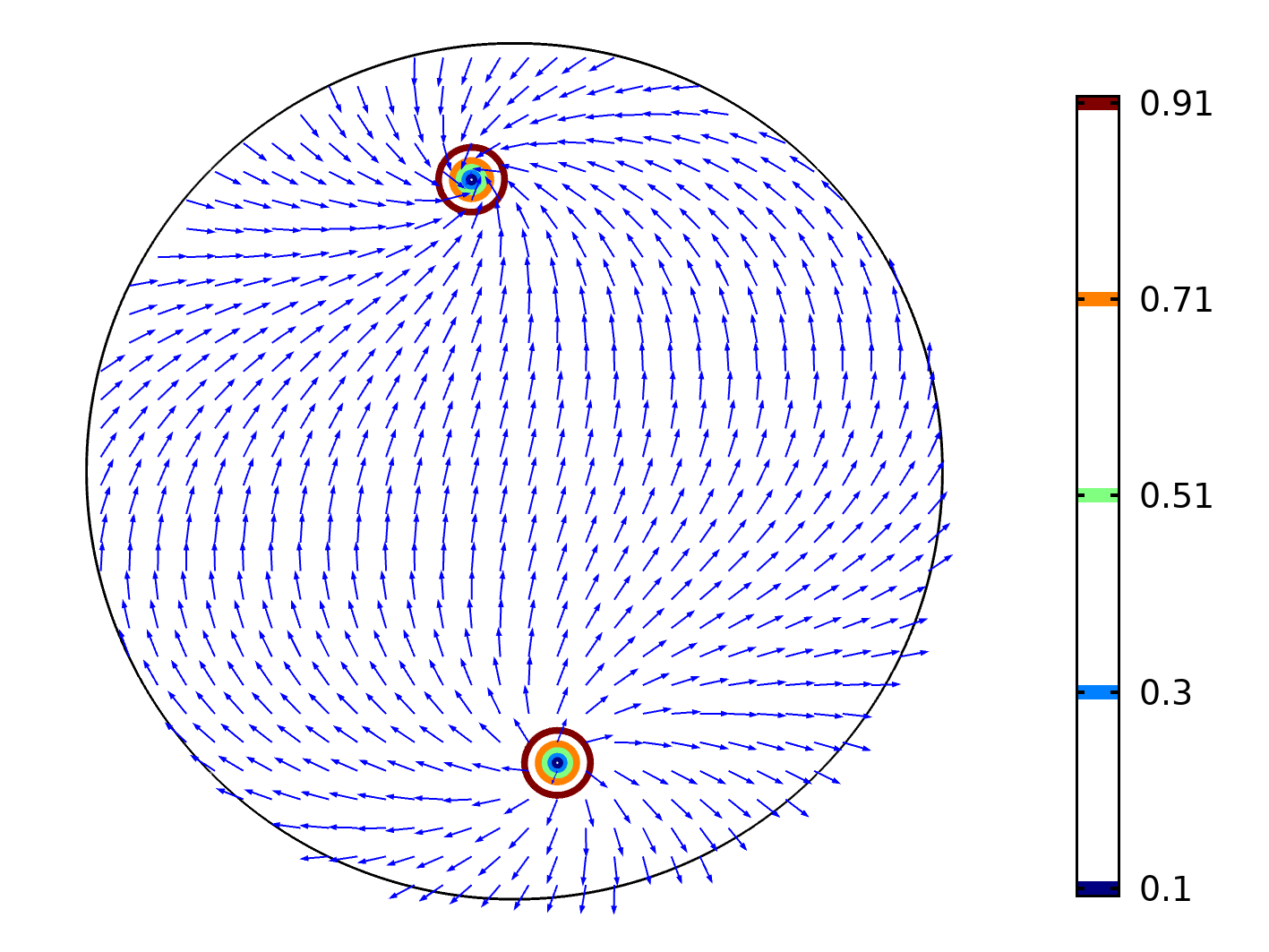} \\ \bigskip \includegraphics[scale=.56]{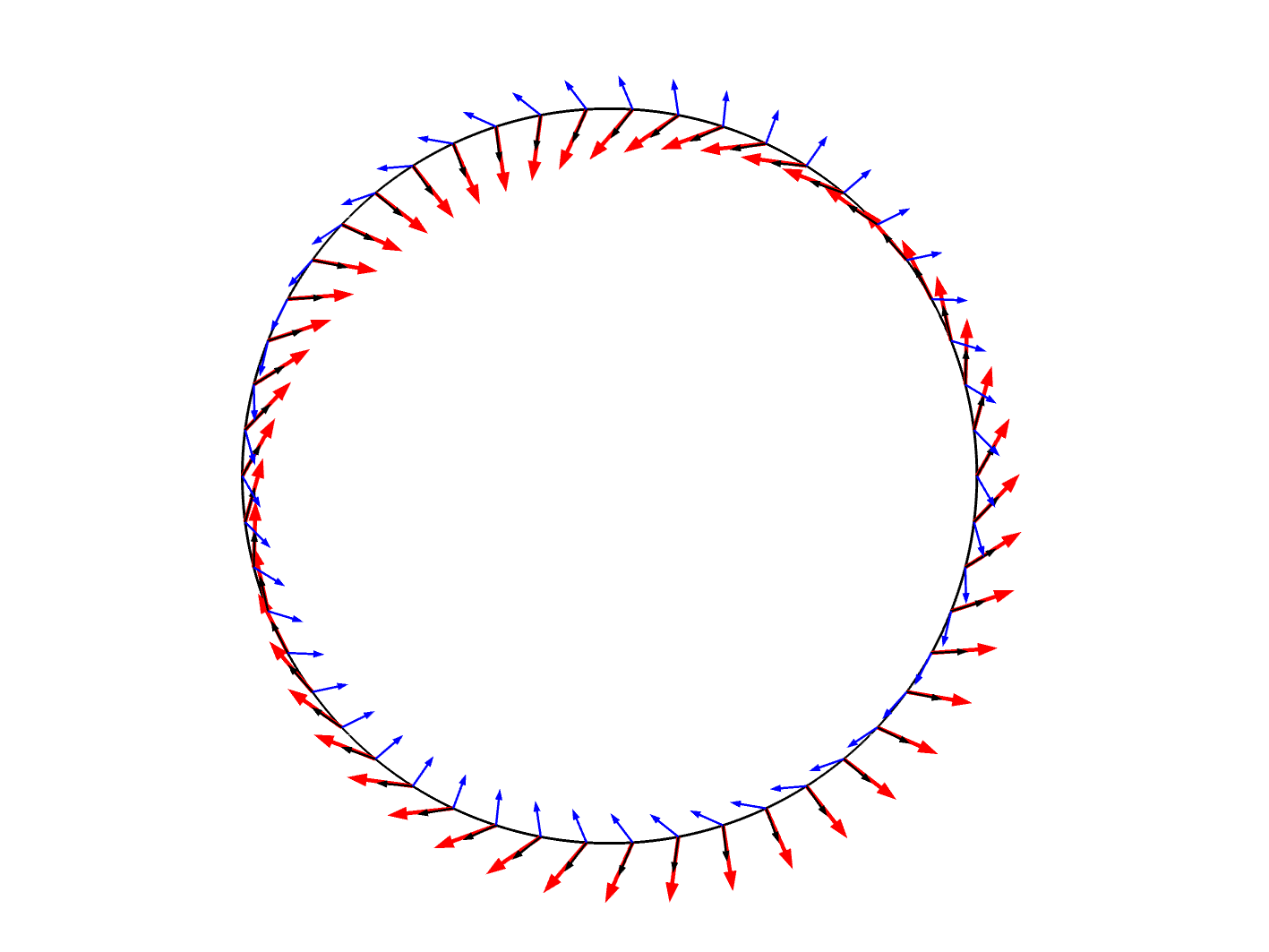}
\caption{A critical point of $E_{\eps}^{g,\alpha}$ for $g(x)=\left(x_1^2-x_2^2,\,2x_1x_2\right)/|x|^2$, $\alpha=\frac{\pi}{3}$, and $s=1$. Top left: The plot of $|u|$; Top right: The vector field $u$. Contour lines of $|u|$ are depicted to indicate the location of the singularity; Bottom: The restriction of the vector field $u$ to $\partial\Omega$. Here $u$ is shown in red, $e^{-i\pi/3}g$ and $e^{i\pi/3}g$ are shown in blue and black, respectively.}\label{fig:n5}
\end{center}
\end{figure}
The same is true for the nematic director (Figure~\ref{fig:n6}) that now has two degree $1/2$ singularities in the interior of $\Omega$.
\begin{figure}
\begin{center}
\includegraphics[scale=.56]{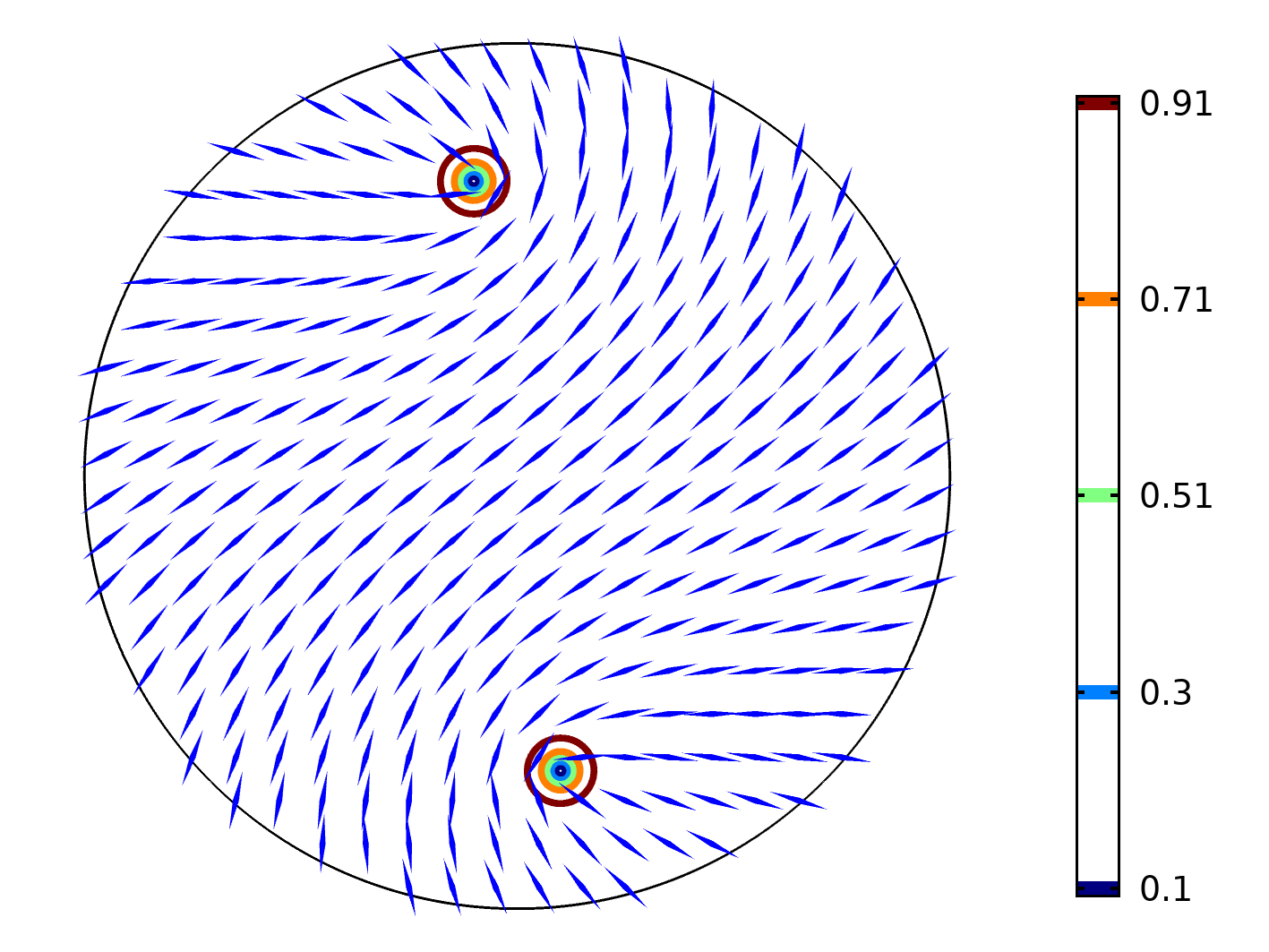}
\caption{The director field $n$ for $g(x)=\left(x_1^2-x_2^2,\,2x_1x_2\right)/|x|^2$, $\alpha=\frac{\pi}{3}$, and $s=1$. Contour lines of $|u|$ are shown to indicate the location of the singularity.}\label{fig:n6}
\end{center}
\end{figure}

For $s=0.72$, a numerically computed critical point of $E_{\eps}^{g,\alpha}$ has two light and two heavy boojums on $\partial\Omega$ as demonstrated in Figure~\ref{fig:n7}. The boojums types are interleaved as one traverses the boundary and the boojums are equidistant from each other.  
\begin{figure}
\begin{center}
\includegraphics[scale=.56]{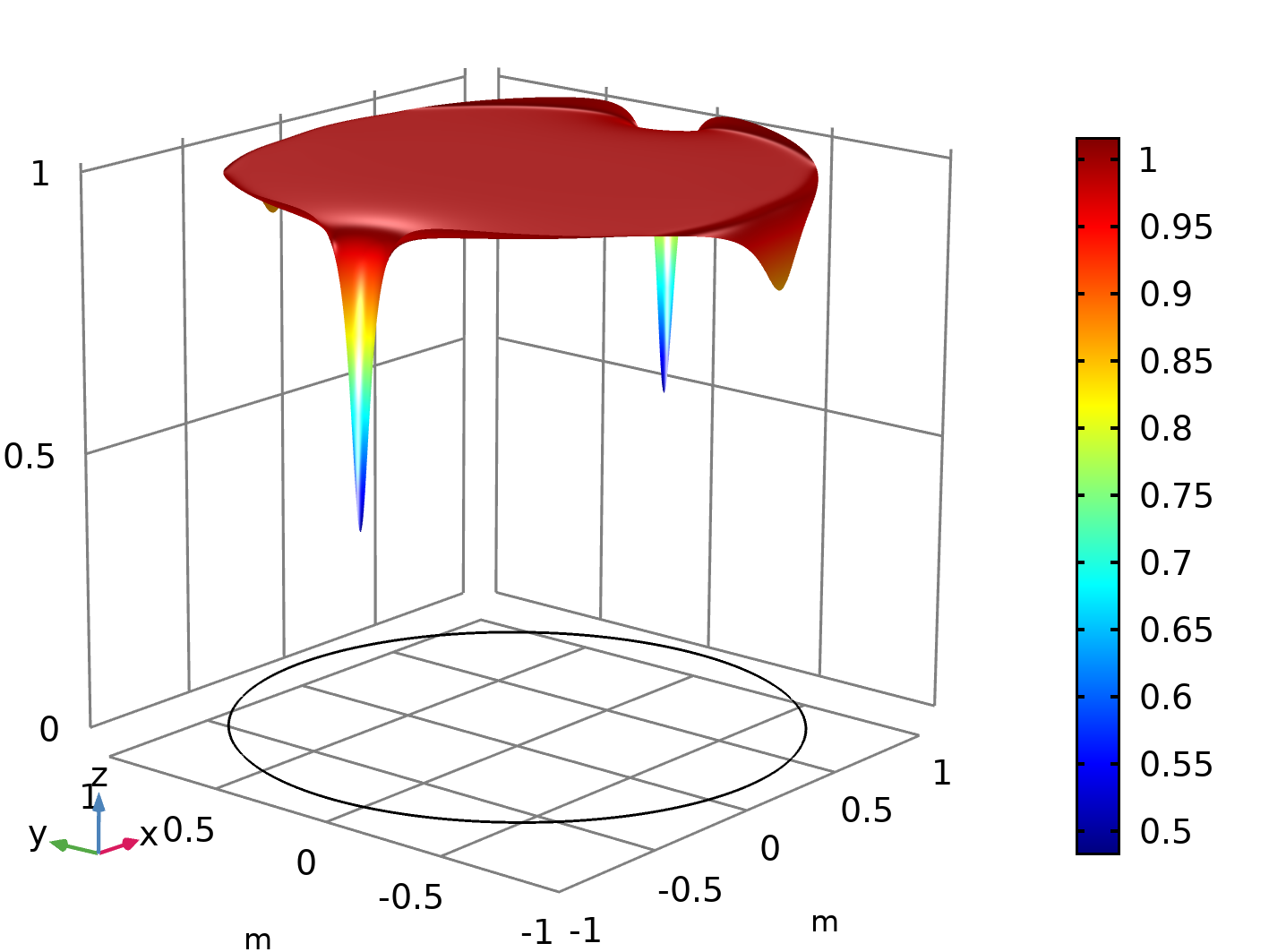}\qquad
\includegraphics[scale=.56]{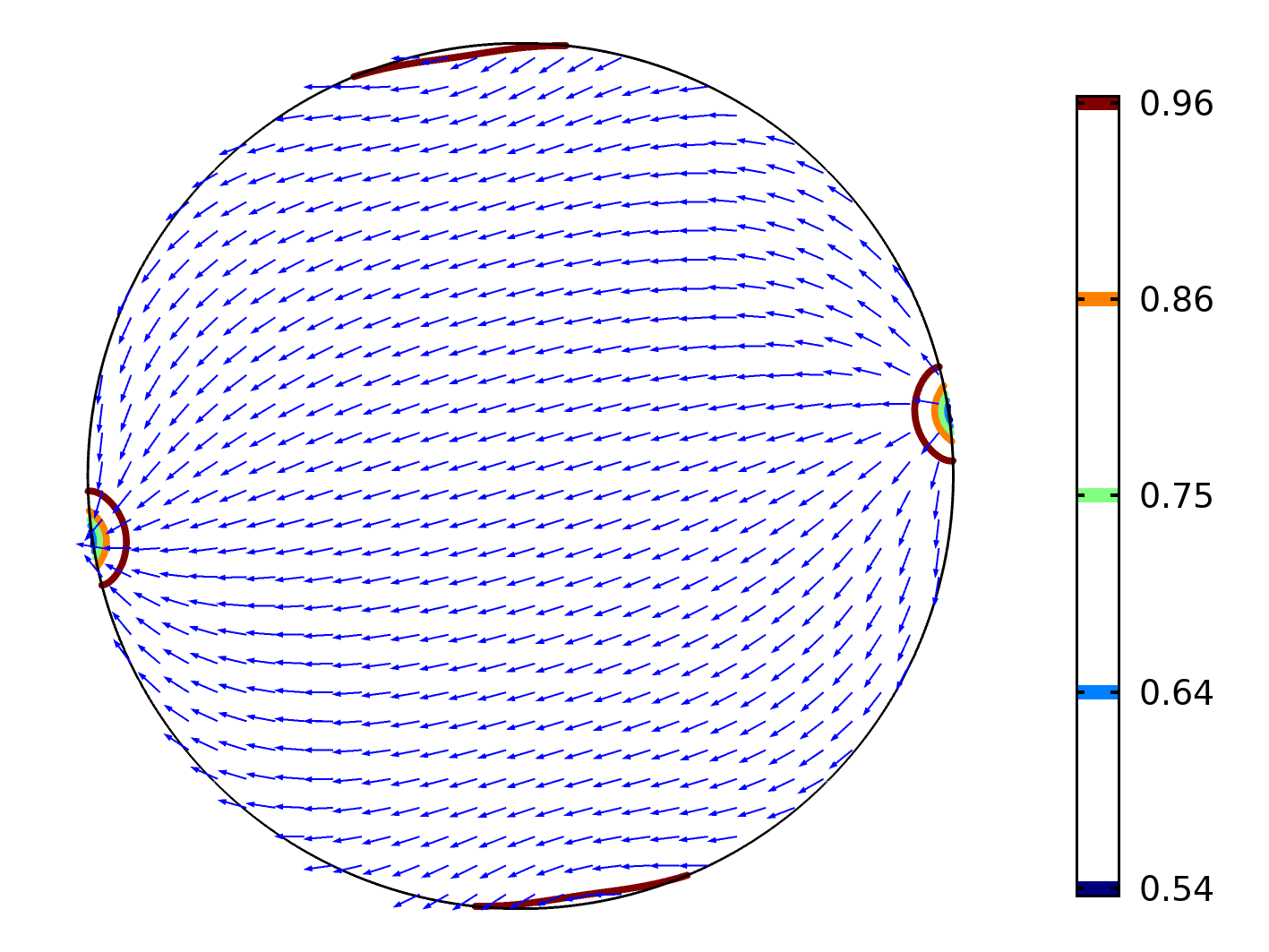} \\ \bigskip \includegraphics[scale=.56]{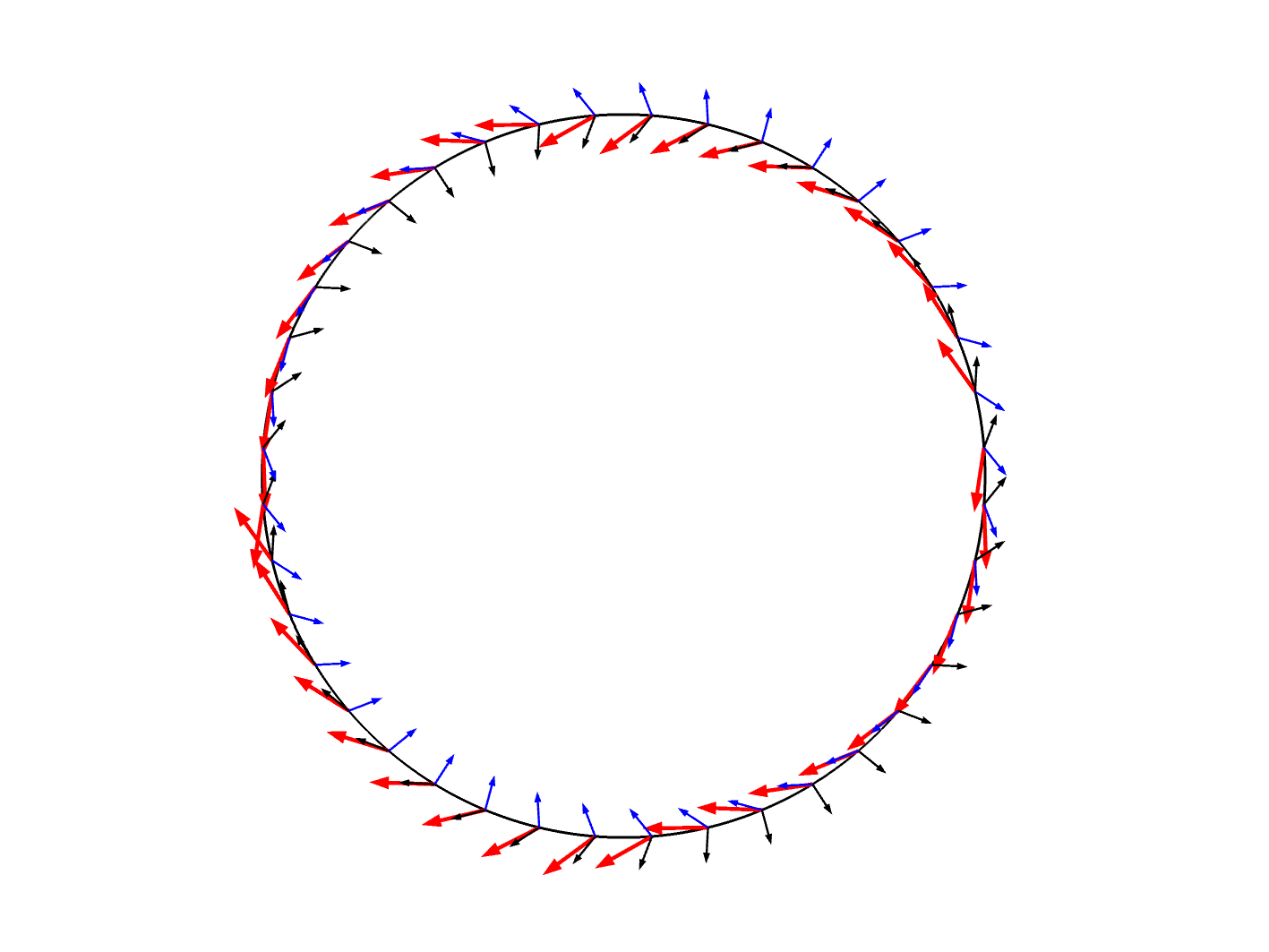}
\caption{A critical point of $E_{\eps}^{g,\alpha}$ for $g(x)=\left(x_1^2-x_2^2,\,2x_1x_2\right)/|x|^2$, $\alpha=\frac{\pi}{3}$, and $s=0.72$. Top left: The plot of $|u|$; Top right: The vector field $u$. Contour lines of $|u|$ are depicted to indicate the locations of the singularities; Bottom: The restriction of the vector field $u$ to $\partial\Omega$. Here $u$ is shown in red, $e^{-i\pi/3}g$ and $e^{i\pi/3}g$ are shown in blue and black, respectively.}\label{fig:n7}
\end{center}
\end{figure}

The distribution of the nematic director $n$ when $s=0.72$ is shown in Figure~\ref{fig:n8}. The director has four boundary singularities, two light and two heavy boojums.

The original motivation for this study stems from experimental results of Volovik and Lavrentovich \cite{LV}, for the case of a (three-dimensional) nematic ball.  Although our treatment in this paper is restricted to a two-dimensional, thin film geometry, we may still observe the resemblance of the configuration in Figure~\ref{fig:n8} with that in Fig. 7b in \cite{LV}, which shows two polar point defects and an equatorial disclination ring on the surface of a spherical particle. Because the 3D configuration in \cite{LV} is invariant with respect to both the axial and mirror symmetries in each cross-section of the particle that contains the axis of symmetry, it displays four surface point singularities separated by 90 degrees angles. Topologically this is the same situation as in Figure~\ref{fig:n8}: there are two heavy boojums that are the traces of the singularities at the poles, and two light boojums corresponding to the intersection between the circular cross-section and the equatorial disclination ring. Here both the equatorial ring and the light boojums serve the same purpose of unwinding the extra phase gained at the poles due to the heavy boojums.

Furthermore, the experimental work in \cite{LV} also indicates that when the angle of inclination between the director and the normal on the surface of the particle is close to 90 degrees , it might be reasonable to seek minimizers of the 3D problem in the class of functions that possess both axial and inversion symmetries. This is the approach that we undertook recently in a separate work \cite{ABGL}, and we conjecture that these techniques can be extended to the present problem in 3D.

\begin{figure}
\begin{center}
\includegraphics[scale=.56]{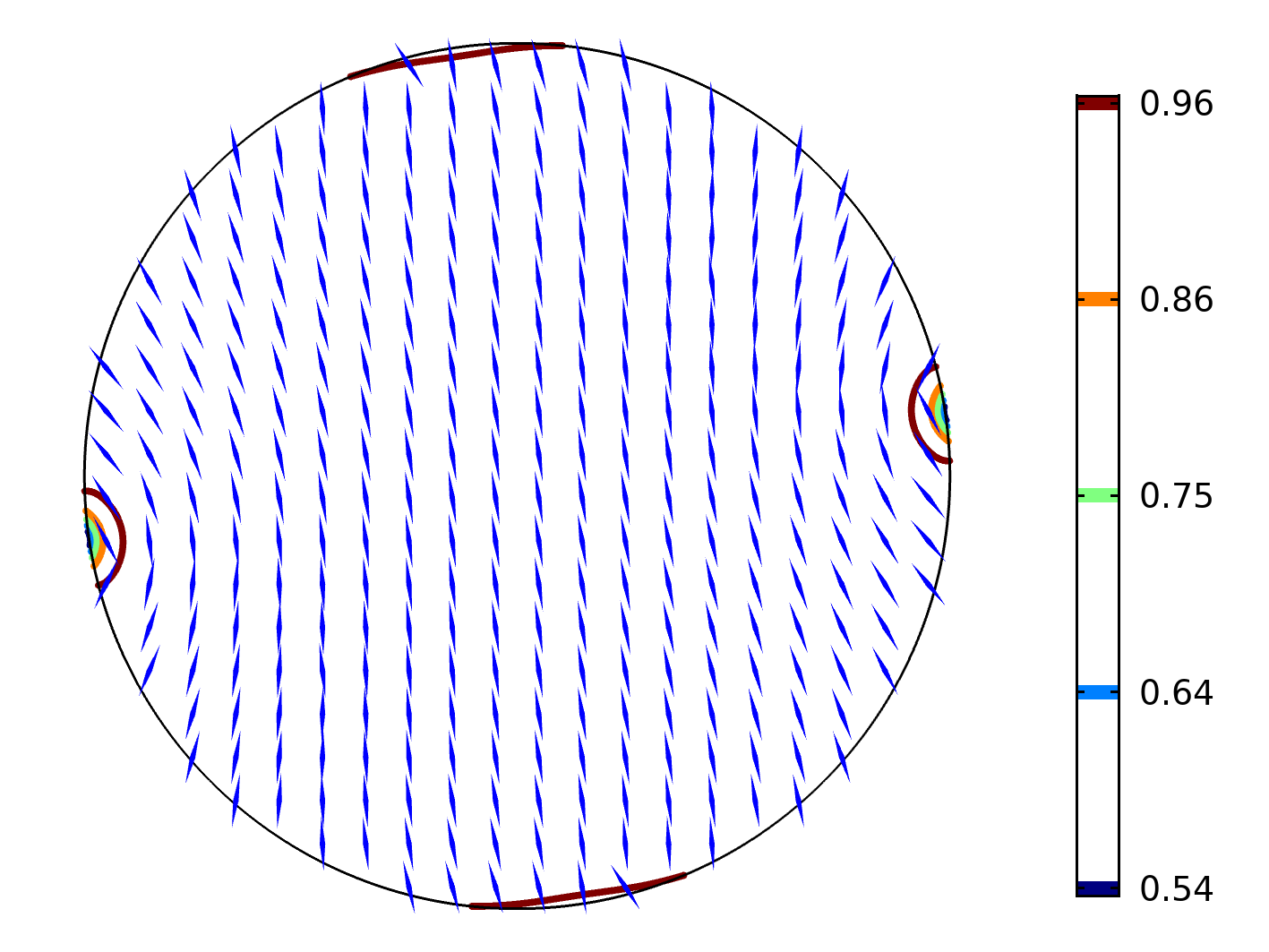}
\caption{The director field $n$ for $g(x)=\left(x_1^2-x_2^2,\,2x_1x_2\right)/|x|^2$, $\alpha=\frac{\pi}{3}$, and $s=0.72$. Contour lines of $|u|$ are shown to indicate the locations of the singularities.}\label{fig:n8}
\end{center}
\end{figure}

\clearpage

\bibliographystyle{siam}

\end{document}